\documentclass[11pt]{article}
\usepackage{etoolbox}
\usepackage{rtrssn}

\usepackage[square,sort,comma,numbers]{natbib}

\def\re#1{{#1}}
\captionsetup[figure]{name={Fig.},labelsep=period,singlelinecheck=off} 

\begin{document}

\title{Riemannian Trust Region Methods for SC$^1$ Minimization}

%
%
%

\author{Chenyu Zhang\thanks{Data Science Institute, Columbia University, New York, USA. (chenyu.zhang@columbia.edu).}\and
Rufeng Xiao\thanks{School of Data Science, Fudan University, Shanghai, China. (rfxiao21@m.fudan.edu.cn).}\and
Wen Huang\thanks{School of Mathematical Sciences, Xiamen University, Xiamen, China. (wen.huang@xmu.edu.cn).}\and
Rujun Jiang\thanks{School of Data Science, Fudan University, Shanghai, China. (rjjiang@fudan.edu.cn).}}
\date{}

\maketitle

\begin{abstract}
	Manifold optimization has recently gained significant attention due to its wide range of applications in various areas. This paper introduces the first Riemannian trust region method for minimizing an SC$^1$ function, which is a differentiable function that has a \textit{semismooth} gradient vector field, on manifolds with convergence guarantee. We provide proof of both global and local convergence results, along with demonstrating the local superlinear convergence rate of our proposed method. As an application and to demonstrate our motivation, we utilize our trust region method as a subproblem solver within an augmented Lagrangian method for minimizing nonsmooth nonconvex functions over manifolds. This represents the first approach that fully explores the second-order information of the subproblem in the context of augmented Lagrangian methods on manifolds. Numerical experiments confirm that our method outperforms existing methods.
\end{abstract}

\section{Introduction}
Manifold optimization has emerged as a significant research area due to its broad applicability in various fields, including phase retrieval \Citep{cai2019fast, bendory2017non}, phase synchronization \Citep{boumal2016nonconvex,liu2017estimation}, low-rank matrix completion \Citep{boumal2011rtrmc, vandereycken2013low}, principal component analysis \Citep{lu2012augmented,montanari2015non}, and deep learning \Citep{cho2017riemannian}.
In a manifold optimization problem, the feasible region is on a smooth manifold, such as a sphere or a Stiefel manifold.
Extensive research has been conducted in the past few decades on optimizing smooth objective functions on manifolds
\Citep{gabay1982minimizing,hu2020brief,hu2018adaptive,wen2013feasible,zhang2016riemannian}, and \cite{absilOptimizationAlgorithmsMatrix2008} summarizes several classical algorithms in this field, such as Newton's method, line-search methods, and trust region methods.
However, these methods encounter challenges when the objective function becomes nonconvex. Hence, manifold optimization with a nonconvex objective function has become an active area of research in recent years \Citep{absilTrustRegionMethodsRiemannian2007,chen2020proximal,chen2016augmented,huang2021riemannian,lai2014splitting}.

In this paper, we consider an unconstrained nonconvex optimization problem on a manifold:
\begin{equation}\label{eq:problem}
  \min_{x\in\M}\phi(x)
\end{equation}
where $\varphi$ is bounded below on a complete Riemannian manifold $\mathcal{M}$, has a Lipschitz continuous and locally directionally differentiable gradient field, but may not be twice differentiable.
Particularly, we will consider the case where $\phi$ is an SC$^{1}$-function: a differentiable function that has a \textit{semismooth} gradient vector field. We defer the definition of semismoothness to \cref{sec:alg}.
SC$^1$ objective functions are commonly encountered in various domains, including stochastic quadratic programs \citep{qi1995sqp} and nonlinear minimax problems \citep{pang1995globally}.

\subsection{Related Work}

\paragraph{Semismooth Newton methods.}
For an SC$^1$ problem~\cref{eq:problem} in a Euclidean space, semismooth Newton (SSN) methods have been widely applied \cite{kummer1988Newtonmethod,qi1993nonsmoothversion}.
Recently, \cite{deoliveiraNewtonMethodFinding2020} extended the SSN methods to Riemannian manifolds.
Then, the Riemannian semismooth Newton method was applied to solve various optimization problems on manifolds. For instance,
\cite{diepeveen2021inexact} applied it to solve a primal-dual optimality system on a manifold,
while \cite{zhou2022semismooth} applied it to solve the subproblem of ALM on manifolds.
In both papers, the Newton system was solved inexactly, and a superlinear local convergence rate was established.

\paragraph{Trust region methods.}
\cite{absilTrustRegionMethodsRiemannian2007} extended trust region methods to Riemannian manifolds, and established a superlinear convergence result similar to the Euclidean case. However, their smoothness requirements for the objective function are strong, assuming $\phi$ is twice continuously differentiable and its Hessian is Lipschitz continuous.
For objective functions that may not be twice differentiable, generalized Hessians have been considered for Euclidean trust region methods.
\cite{sun1997computablegeneralized} proposed a globally
and superlinearly convergent trust region algorithm for the variational inequality problem, which utilizes the D-gap function and its computable generalized Hessian.
For the same problem, \cite{yang2000trustregion} also proposed a trust region type method, which only switches to a trust region step when Newton's step fails to yield a sufficient decrease, thus avoiding the use of the trust region near a strict local minimum.
\cite{jiang1996Globallysuperlinearly} considered a constrained convex SC$^{1}$ problem, and similar to \cite{yang2000trustregion}, their algorithm only resorts to a trust region strategy when Newton's step fails.

\paragraph{Problem \eqref{eq:problem} as subproblems in two methods.}
Recently, \cite{kangkang2022inexact} and \cite{zhou2022semismooth} extended augmented Lagrangian methods to nonsmooth nonconvex manifold optimization and established convergence guarantees, which motivates the research in this paper. However, neither of these papers incorporates a full second-order method to solve the subproblem in the ALM; the former solves the subproblem using the Riemannian gradient descent method, while the latter employs an SSN method that falls back to the gradient descent method when encountering negative curvatures.
For minimizing a composite function over a manifold, \cite{beck2023dynamic} reformulated the objective function utilizing dynamic smoothing, whose subproblem is also in form \eqref{eq:problem}.

\subsection{Contributions}
In this paper, we introduce a novel Riemannian trust region method for minimizing SC$^1$ functions on Riemannian manifolds. We provide empirical evidence of the global convergence, local convergence near nondegenerate local minima, and superlinear local convergence rate of our method under mild conditions, adapted from its Euclidean counterpart. Our method represents is the first Riemannian trust region approach to attain a provable superlinear local convergence rate without the need for the objective function to be twice differentiable. Moreover, we relax the smoothness requirement on the retraction to only necessitate a Hölder continuous differential, which aligns with the SC$^{1}$ objective function. In contrast, the prior work \cite{absilTrustRegionMethodsRiemannian2007}  requires the retraction to have a Lipschitz continuous differential for global convergence of the algorithm and to be twice continuously differentiable for the algorithm's superlinear local convergence rate. Furthermore, we provide a proof of the trust region's inactivity near a nondegenerate minimizer, which plays a crucial role in establishing the superlinear local convergence rate. To the best of our knowledge, this is the first guarantee of the eventual inactivity of the trust region for semismooth trust region methods, even within the framework of Euclidean spaces.

As an important application, we employ our semismooth Riemannian trust region method to solve the subproblem of the ALM on manifolds. Notably, our approach stands out as the first method to fully exploit the second-order information of the ALM's subproblem, thereby benefiting from the advantages offered by trust region methods over first-order methods and the Newton method, including adaptive step-size and automatic detection of negative curvature.
Through numerical experiments on compressed models and sparse principal component analysis, we demonstrate that our proposed method outperforms existing methods, achieving better convergence performance, characterized by faster convergence speed and improved objective function values.

\subsection{Organization}
This paper is organized as follows. In \cref{sec:pre}, we briefly review the basic concepts in Riemannian manifold optimization and trust region methods, introducing tools required for our algorithms and analysis.
In \cref{sec:alg}, we present our Riemannian trust region method for minimizing an SC$^1$ function on a manifold.
In \cref{sec:anlys}, we establish the convergence results of our algorithm, including global convergence, local convergence (including attraction property of nondegenerate local minimizers), and superlinear local convergence rate. In \cref{sec:app}, we present an application of our method, solving subproblems of augmented Lagrangian methods on manifolds. Finally, we evaluate our algorithm through multiple numerical experiments, including compressed modes and sparse principal component analysis in \cref{sec:exp}.

\section{Preliminaries} \label{sec:pre}
\subsection{Riemannian Manifold Optimization}

In this section, we provide a brief introduction to Riemannian manifold optimization, assuming familiarity with basic concepts such as smooth manifolds, tangent spaces, and smooth mappings on manifolds.
Omitted definitions in this section and more details can be found in monographs such as \citep{boothbyIntroductionDifferentiableManifolds2003} and \citep{absilOptimizationAlgorithmsMatrix2008}.
We summarize the notations on Riemannian manifold optimization we will use in \cref{tb:notation}.

\begin{table}[ht]
  \caption{~Notations}
  \label{tb:notation}
  \centering
  \begin{tabular}{cl}
    \toprule
    Notation                                        & Definition                                                                         \\
    \midrule
    $\M$                                            & A complete Riemannian manifold                                                     \\
    $T_x\M$                                         & The tangent space at $x\in\M$                                                      \\
    $\mathcal{L}(T_x\M)$                            & The set of linear operators from $\tm{x}$ to itself                                \\
    $\R,\NN$                                        &
    The set of all real numbers; the set of all natural numbers
    \\
    $\R^{k},E$                                      & A vector space; a general Euclidean space                                          \\
    $\R^{k}_{-},\R^{k}_{+}$                         & The set of elements in $\R^{k}$ with non-positive/non-negative components          \\
    $B_{\delta}(x)$                                 & The ball with center $p\in S$ and radius $\delta$,                                 \\[-1ex]&where $p \in S$ can be $x\in\M$, $0_{x}\in\tm{x}$, $\operatorname{id}_{\tm{x}}\in \mathcal{L}(\tm{x})$, etc.\\
    $x,y,z,p,q$                                     & Points on manifolds                                                                \\
    $X,Y,Z$                                         & Vector fields on manifolds                                                         \\
    $\xi,\eta,\zeta,v$                              & Vectors in the tangent space                                                       \\
    $\gamma$                                        & A piecewise smooth curve on manifolds                                              \\
    $\lang\cdot,\cdot\rang_x,\lang\cdot,\cdot\rang$ & The Riemannian inner product on $\tm{x}$                                           \\
    $\|\cdot\|_x,\|\cdot\|$                         & The Riemannian norm on $\tm{x}$                                                    \\
    $\|\cdot\|_{\mathrm{op}}$                       & The operator norm                                                                  \\
    $\dist(x,y)$                                    & The distance between $x,y\in\M$                                                    \\
    $\d f$                                          & The differential of function $f$ on manifolds                                      \\
    $\nabla_{v} X$                                  & The Riemannian covariant derivative of $X$ at $t$ along $v$ ($\gamma'(t) = v$)     \\
    $\nabla$                                        & The Riemannian connection (Levi-Civita connection) on $\M$                         \\
    $P_\gamma^{t_1\to t_2}$                         & The parallel transport along $\gamma$ from $t_1$ to $t_2$                          \\
    $P_{xy}$                                        & The parallel transport along the geodesic connecting from $x$ to $y$               \\
    $\exp_x$                                        & The exponential map at $x\in\M$                                                    \\
    $R_x$                                           & The retraction restrict to $\tm{x}$                                                \\
    $\grad f$                                       & The Riemannian gradient of $f$                                                     \\
    $\H f$                                          & The Riemannian Hessian of $f$                                                      \\
    $\pat f,\pat X$                                 & The Clarke subdifferential of function $f$;                                        \\[-1ex]& the Clarke generalized covariant derivative of the vector field $X$\\
    $H_{x_k},H_{k}$                                 & An element in the Clarke generalized covariant derivative $\pat\!\grad\phi(x_{k})$
    \\\bottomrule
  \end{tabular}
\end{table}

In this paper, we focus on Riemannian manifolds, which are smooth manifolds equipped with an inner product $\left<\cdot,\cdot \right>_x$ on the tangent space $\tm{x}$ varying smoothly with respect to $x$ on the manifold $\M$. The family of inner products is called the Riemannian metric on the manifold.
This paper deals with general Riemannian manifolds, and we only consider their intrinsic properties and always use the notation $\left<\cdot, \cdot \right>_{x}$ to refer to the Riemannian inner product.

The Riemannian metric also introduces a norm on the tangent space, defined by $\|\xi\|_{x} = \sqrt{\left<\xi,\xi \right>_{x}}$ for any $\xi\in\tm{x}$, and a distance on the manifold, defined by $\dist(x,y) = \inf_{\gamma}\int_{0}^{1}\|\gamma'(t)\|_{\gamma(t)}\d t$ for all $x,y\in\M$, where the infimum is taken over all piecewise smooth curves $\gamma: [0,1] \to \M$ connecting $x$ and $y$.
Throughout the paper, we will drop the subscript $x$ of the Riemannian inner product and norm if it is clear from the context.

In manifold optimization, we still need to return to linear spaces, like tangent spaces, to perform various operations.
However, unlike the Euclidean case, the tangent vectors at different points of a manifold are not in the same tangent space. So we need a mapping to bridge different tangent spaces. This is where the concepts of geodesics and parallel transports come into play.

\begin{definition}[Parallel, geodesic, and parallel transport]\label{def:geo-para}
	Let $\gamma:[0,1]\to \M$ be a smooth curve.
	\begin{itemize}
		\item A vector field $X$ is said to be parallel along $\gamma$ if $\nabla_{\gamma'(t)}X = 0$ for any $t \in [0,1]$, where $\nabla$ is the Riemannian connection (Levi-Civita connection).
		\item $\gamma$ is said to be geodesic if the field of its tangent vector $\gamma'(t)$ is parallel along itself.
		\item For any $\xi \in \tm{\gamma(0)}$, there exists a unique parallel vector field $X_\xi$ along $\gamma$ such that $X_\xi(0) = \xi$.
		      The parallel transport operator along $\gamma$ is defined by $P_{\gamma}^{0\to t}: \xi \mapsto X_\xi(t)$. When the curve is geodesic and connects $x,y$, we denote $P_{xy} \coloneqq P_{\gamma}^{0\to 1}$.
	\end{itemize}
\end{definition}

The parallel transport is an isometry that preserves the inner product, i.e., $\left<P_{xy}\xi_{x}, P_{xy}\eta_{x} \right>_{y} = \left<\xi_{x},\eta_{x} \right>_{x}$ for any tangent vectors $\xi_{x},\eta_{x}\in\tm{x}$. Hence, we can freely transfer vectors between different tangent spaces using the parallel transport.
From the tangent space to the manifold, geodesics naturally introduce a local map called the exponential map defined as $\exp_{x}: \xi \mapsto \gamma(1)$, where $\gamma$ is a unique geodesic such that $\gamma(0) = x, \gamma'(0) = \xi$, and $\dist(\gamma(0),\gamma(1)) = \|\xi\|$. The exponential map is a local diffeomorphism, and we define the injective radius of $x$ as follows:
$$
	\operatorname{inj}_{x}(\M) \coloneqq \sup \{ \delta>0: \exp_{x} \text{ is a diffeomorphism on } B_{\delta}(0_{x}) \subset \tm{x} \}.
$$
The global injective radius of the manifold is defined as $\operatorname{inj}(\M) \coloneqq \inf_{x\in\M}\operatorname{inj}_{x}(\M)$.
\re{
	The \textit{geodesically convex normal neighborhood} (or \textit{normal neighborhood} for short) of $x$ is a neighborhood $\operatorname{NN}(x)$ such that for any $y,z\in  \operatorname{NN}(x)$, there is a unique minimizing geodesic segment from $y$ to $z$ in $\M$, and the image of this geodesic segment lies entirely in $\operatorname{NN}(x)$.
	There is a nonempty geodesically convex normal neighborhood at each point of a Riemannian manifold \cite[Theorem 6.17]{lee2012Smoothmanifolds}.
}
We rely heavily on the smooth diffeomorphism offered by the exponential map, both in the algorithms and analysis. Therefore, throughout the paper, we restrict all neighborhoods of a point $x$ to be within $x$'s \re{geodesically convex normal neighborhood}.
This restriction is possible when we only consider a compact subset $\Omega$ of the manifold. Thus the injective radius of $\Omega$ has a positive lower bound \re{\cite{huang2021riemannian,diepeveen2021inexact}},
\re{and $\exp_{x}(B_{\frac{1}{2}\operatorname{inj}(\Omega)}(0_{x})) \subset \operatorname{NN}(x)$ for any $x\in\M$.
	For example, any point on a unit sphere with Euclidean metric has an injective radius of $\pi$ and a geodesically convex normal neighborhood $\operatorname{int}B_{\pi /2}$.
}
In addition, we assume that the manifold is complete, i.e., any locally defined geodesic can be extended to the entire real axis. This assumption ensures that the exponential map is well-defined on the entire tangent space, and the shortest curve connecting two points is a geodesic (see \cite{carmoDifferentialGeometryCurves2016}).

We now introduce some key tools related to functions on manifolds that will help us apply algorithms to solve manifold optimization problems.

\begin{definition}[Riemannian gradient and Hessian]
	For a function from $\M$ to $\R$, its Riemannian gradient at $x$ is a unique tangent vector such that
	$$
		\lang \grad f(x), \xi\rang = \d f(x)[\xi],\quad \forall \xi \in \tm{x},
	$$
	where $\d f(x)$ is the differential of $f$ at $x$; its Riemannian Hessian is an element in $\mathcal{L}(\tm{x})$, the set of all linear operators from $\tm{x}$ to itself, such that
	$$
		\H f(x)[\xi] = \nabla_{\xi}\grad f(x),\quad \forall \xi \in \tm{x},
	$$
	where $\nabla$ is the Riemannian connection (Levi-Civita connection) on $\M$.
\end{definition}

The problem we consider is not necessarily smooth; to obtain its second-order information, we need the following definitions, which can be found in \Citep{deoliveiraNewtonMethodFinding2020,ghahraei2017PseudoJacobiancharacterization}.

\begin{definition}[Lipschitz continuous vector field]\label{def:lip}
	A vector field $X$ on $\M$ is said to be locally Lipschitz continuous if for any $x\in\M$, there exist a radius $\delta_x > 0$ and a constant $L_x >0$ such that
	$$
		\|P_{yz}X(y) - X(z)\| \le L_x \dist (y,z), \quad \forall y,z \in B_{\delta_x}(x).
	$$
	A function is said to be Lipschitz continuously differentiable if its gradient vector field is a Lipschitz vector field with a global Lipschitz constant $L$ and a global radius of neighborhood $\delta$.
\end{definition}

\begin{definition}[Directional derivative]
	For a vector field $X$, its directional derivative at $x\in \M$ along $\xi\in\tm{x}$ is defined as
	$$
		\nabla X(x;\xi)\coloneqq \lim_{t\to 0^+}\frac{1}{t}\left( P_{\exp_{x}(t\xi),x} X(\exp_{x}(t\xi)) - X(x) \right).
	$$
	$X$ is said to be directionally differentiable at $x$ if $\nabla X(x;\xi)$ exists for all $\xi \in \tm{x}$.
\end{definition}

If $X$ is differentiable at $x$, then it is directionally differentiable, and $\nabla X(x;\xi) = \nabla_\xi X(x)$ for all $\xi \in \tm{x}$.

In subgradient methods for manifold optimization,
the elements in the Clarke subdifferential are frequently used as generalized gradients \citep{grohs2016nonsmooth,chen2020proximal}.
Similarly, we can define a generalized Hessian for non-twice differentiable functions.
\citep[Theorem 3.2]{deoliveiraNewtonMethodFinding2020} states that locally Lipschitz continuous vector fields on $\M$ are differentiable almost everywhere, allowing us to introduce the Clarke generalized covariant derivative of such vector fields.

\begin{definition}[Clarke generalized covariant derivative] \label{def:cgcd}
	The Clarke generalized covariant derivative of a locally Lipschitz continuous vector field $X$ at $x\in\M$ is defined as
	$$
		\pat X(x) \coloneqq \conv \left\{
		\begin{aligned}
			H\in \mathcal{L}(\tm{x}) : \exists \{x_k\} \subset \mathcal{D}_X, x & = \lim_{k\to\infty}x_k,                             \\
			H                                                                   & = \lim_{k\to \infty}P_{x_kx}\nabla X(x_k) P_{x x_k}
		\end{aligned}\right\},
	$$
	where $\mathcal{D}_X$ is the collection of points on the manifold where $X$ is differentiable.
	Since Hessians at differentiable points are self-adjoint \cite[Lemma 11.1]{lee2018IntroductionRiemannian}, all elements in the Clarke generalized covariant derivative, and their inverse (if exists), are self-adjoint.
\end{definition}

While elements in the Clarke generalized covariant derivative of the gradient field are not necessarily Hessian operators, they possess desired properties that make them suitable replacements for Hessian operators in our algorithms.

\begin{proposition}[{\citep[Proposition 3.1]{deoliveiraNewtonMethodFinding2020}}] \label{lem:cgcd}
	Let $\pat X$ be the Clarke generalized covariant derivative of a locally Lipschitz continuous vector field $X$ on $\M$. The following statements are valid for any $x \in \M$:
	\begin{enumerate}
		\item $\partial X(x)$ is a nonempty, convex, and compact subset of $\mathcal{L}\left(T_{x}\M\right)$;
		\item $\pat X$ is locally bounded; that is, for any $\delta>0$, there exists $C>0$ such that for all $y \in B_{\delta}(x)$ and $H \in \partial X(y)$, it holds that $\|H\| \leq C$;
		\item $\partial X$ is upper semicontinuous at $p$; that is, for any scalar $\epsilon>0$, there exists $\delta > 0$ such that for all $y \in B_{\delta}(x)$, it holds that
		      $$
			      P_{yx} \partial X(y) P_{xy}\subset \partial X(x)+B_{\epsilon}(0),
		      $$
		      where $B_{\epsilon}(0):=\left\{H \in \mathcal{L}\left(T_{x} \M\right):\|H\|<\epsilon\right\}$.
		      Consequently, $\partial X$ is closed at $x$; that is, if $\lim _{k \rightarrow+\infty} x_{k}=x, H_{k} \in \partial X\left(x_{k}\right)$ for all $k=0,1, \ldots$, and $\lim _{k \rightarrow+\infty} P_{x_{k} x} H_{k}P_{x x_k}=H$, then $H \in \partial X(x)$.
	\end{enumerate}
\end{proposition}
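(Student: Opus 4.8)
The plan is to transplant Clarke's classical proof for the Euclidean generalized Jacobian to the manifold, with parallel transport furnishing the identifications between tangent spaces. Fix $x$ and work throughout inside a fixed normal neighborhood $U$ of $x$ with compact closure, on which $X$ is Lipschitz with some constant $L$; write $\mathcal{G}(y)$ for the set of limits $\lim_{k}P_{x_k y}\nabla X(x_k)P_{y x_k}$ with $\{x_k\}\subset\mathcal{D}_X$, $x_k\to y$, so that $\pat X(y)=\conv\mathcal{G}(y)$ by \cref{def:cgcd}.

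The first step is the a priori bound $\|\nabla X(z)\|_{\mathrm{op}}\le L$ at every $z\in\mathcal{D}_X\cap U$: from the Lipschitz estimate in \cref{def:lip} one has $\|P_{\exp_z(t\xi),z}X(\exp_z(t\xi))-X(z)\|\le L\dist(\exp_z(t\xi),z)=Lt\|\xi\|$ for small $t$, and dividing by $t$ and letting $t\to 0^+$ gives $\|\nabla X(z;\xi)\|\le L\|\xi\|$. Since parallel transport is an isometry, every element of $\mathcal{G}(y)$, hence of $\pat X(y)$, has operator norm at most $L$ for $y\in U$; this proves statement (2). For statement (1): the Rademacher-type result \citep[Theorem 3.2]{deoliveiraNewtonMethodFinding2020} supplies a sequence $x_k\to x$ in $\mathcal{D}_X$, and the bound above lets us extract a convergent subsequence of $P_{x_k x}\nabla X(x_k)P_{x x_k}$, so $\mathcal{G}(x)\ne\emptyset$; convexity is built into the definition; and compactness follows, via Carathéodory, once we know $\mathcal{G}(x)$ is closed (it is already bounded), which is the same diagonal argument as — and a special case of — the one needed for (3).

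The core is upper semicontinuity (3), which I would prove by contradiction. If it fails there are $\epsilon_0>0$, $y_j\to x$, and $H_j\in\pat X(y_j)$ with $\dist(P_{y_j x}H_j P_{x y_j},\pat X(x))\ge\epsilon_0$. By Carathéodory write $H_j=\sum_{i=1}^N\lambda_j^i H_j^i$ with $H_j^i\in\mathcal{G}(y_j)$ and $N$ depending only on $\dim\M$; passing to subsequences (using (2)) we may assume $\lambda_j^i\to\lambda^i$ and $P_{y_j x}H_j^i P_{x y_j}\to G^i$. It then suffices to show $G^i\in\mathcal{G}(x)$, since then $\sum_i\lambda^i G^i\in\pat X(x)$ contradicts the $\epsilon_0$-separation. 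For each fixed $j$, since $H_j^i=\lim_k P_{x_k^{(j,i)}y_j}\nabla X(x_k^{(j,i)})P_{y_j x_k^{(j,i)}}$ with $x_k^{(j,i)}\to y_j$, choose $k(j)$ so large that $z_j:=x_{k(j)}^{(j,i)}$ satisfies $\dist(z_j,y_j)<1/j$ and $\|P_{z_j y_j}\nabla X(z_j)P_{y_j z_j}-H_j^i\|<1/j$. Then $z_j\to x$ and $z_j\in\mathcal{D}_X$, and I claim $P_{z_j x}\nabla X(z_j)P_{x z_j}\to G^i$, which places $G^i$ in $\mathcal{G}(x)$.

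The main obstacle — and the only genuine departure from the Euclidean argument, where all transports are the identity — is verifying this last limit, since $P_{z_j x}$ is transport along the geodesic from $z_j$ to $x$ and is not the composition $P_{y_j x}P_{z_j y_j}$. The remedy is that $(p,q)\mapsto P_{pq}$, expressed in a fixed orthonormal frame near $x$, is continuous (indeed smooth on $U\times U$), so as $z_j,y_j\to x$ each of $P_{z_j x}$, $P_{z_j y_j}$, $P_{y_j x}$ tends to the identity in that frame; combined with $\|\nabla X(z_j)\|_{\mathrm{op}}\le L$ this shows $P_{z_j x}\nabla X(z_j)P_{x z_j}$ and $P_{y_j x}\bigl(P_{z_j y_j}\nabla X(z_j)P_{y_j z_j}\bigr)P_{x y_j}$ have the same limit, and the inner factor is within $1/j$ of $H_j^i$, so the whole expression converges to $\lim_j P_{y_j x}H_j^i P_{x y_j}=G^i$. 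Finally, the stated closedness is an immediate consequence of (3) and the closedness of $\pat X(x)$: if $x_k\to x$, $H_k\in\pat X(x_k)$ and $P_{x_k x}H_k P_{x x_k}\to H$, then (3) forces $H\in\pat X(x)+\overline{B_\epsilon(0)}$ for every $\epsilon>0$, whence $H\in\pat X(x)$.
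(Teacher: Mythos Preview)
The paper does not supply its own proof of this proposition; it is quoted from \cite{deoliveiraNewtonMethodFinding2020} and used as a black box throughout. Your argument is a faithful manifold adaptation of Clarke's classical proof for the Euclidean generalized Jacobian, and it is correct: the a priori bound $\|\nabla X(z)\|_{\mathrm{op}}\le L$ from the Lipschitz condition, the Carath\'eodory decomposition, and the diagonal extraction are all standard, and you have correctly isolated the one genuinely Riemannian wrinkle---the holonomy defect $P_{z_j x}\ne P_{y_j x}P_{z_j y_j}$---and dispatched it via the smooth dependence of parallel transport on endpoints together with the uniform bound on $\|\nabla X(z_j)\|$. This is presumably the route taken in the cited reference as well; there is nothing in the present paper to compare against.
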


\subsection{Trust Region Methods}

Trust region methods are an extension of Newton's method, which have better convergence properties and relax the convexity requirement by automatically detecting the negative curvature. For a comprehensive discussion, we refer readers to monographs such as \cite{nocedalNumericalOptimization2006} and \cite{conn2000Trustregion}.
While trust region methods share the same objective function as Newton's method in the model problem, they possess a trust region constraint.
Specifically, for a smooth function $\phi$ in a Euclidean space $E$, a classical trust region method (see \cite{conn2000Trustregion}) often chooses the model problem at $x_k$ as
\begin{equation}\label{eq:etr}
	\begin{aligned}
		\min_{\eta\in E} & \quad m_{x_k}(\eta) \coloneqq \phi(x_k) + \langle\grad_{E}\phi(x_k), \eta\rangle_{E} + \frac{1}{2}\langle\operatorname{Hess}_{E}\phi(x_k)\eta,\eta\rangle_{E} \\
		\text{s.t.}      & \quad \|\eta\|_E\le \Delta_k,
	\end{aligned}
\end{equation}
where $\grad_{E}$, $\operatorname{Hess}_{E}$, $\left<\cdot,\cdot  \right>_{E}$, and $\|\cdot \|_{E}$ are the Euclidean gradient, Hessian, inner product, and norm respectively.
After solving the model problem, a trust region method compares the actual decrease and model decrease by computing the relative decrease ratio, which is used to determine the next iteration point and trust region radius. We defer the implementation details of a trust region method to the next section.

\section{Semismooth Riemannian Trust Region Method} \label{sec:alg}
In this section, we present a semismooth Riemannian trust region method to solve the unconstrained nonconvex problem on a manifold:
\begin{align*}
	\min_{x\in\M} \varphi(x),
\end{align*}
where $\varphi$ is bounded below on $\mathcal{M}$, has a Lipschitz continuous and locally directionally differentiable gradient field with a Lipschitz constant $L$, but may not be twice differentiable.
To overcome the absense of the Hessian, we utilize the Clarke generalized covariant derivative of the gradient field in our trust region method. To ensure the super-linear local convergence rate, we require $\phi$ to be SC$^{1}$, i.e., impose the semismoothness condition on the Clarke generalized covariant derivative $\pat\grad\phi$.

\begin{definition}[Semismoothness \cite{deoliveiraNewtonMethodFinding2020}]\label{def:ss}
	Let $X$ be a locally Lipschitz continuous vector field on $\M$ that is directionally differentiable in a neighborhood of $x \in \M$.
	$X$ is said to be $\mu$-order semismooth at $x$ with respect to its Clarke generalized covariant derivative, for $\mu \ge 0$, if for any $\epsilon > 0$, there exists $\delta > 0$ such that
	$$
		\left\|X(x) - P_{yx}\left(X(y) + H_y \exp_y^{-1}(x)\right)\right\| \le \epsilon \dist (x,y)^{1+\mu},
	$$
	for all $y\in B_{\delta}(x)$ and $H_{y}\in \partial X(y)$, \re{where $\exp_{y}^{-1}$ is the inverse map of $\exp_{y}$, which is well-defined at $x$ for any $y$ in the normal neighborhood of $x$}.
	When $\mu = 0$, we simply say $X$ is semismooth. Moreover, if there exists $C,\delta > 0$ such that
	$$
		\left\|X(x) - P_{yx}\left(X(y) + H_y \exp_y^{-1}(x)\right)\right\| \le C \dist (x,y)^{2},
	$$
	{ for all $y\in B_{\delta}(x)$ and $H_{y}\in \partial X(y)$,} we say $X$ is strongly semismooth.
\end{definition}
For example, a piecewise smooth vector field $X$ is strongly semismooth with respect to $\pat X$.

When solving manifold optimization problems iteratively, we typically do not compute the iteration points directly on the manifold. Instead, we use Riemannian gradients and Hessians to calculate iteration points on the tangent space and then retract them onto the manifold. The exponential map, a distance-preserving smooth diffeomorphism between the tangent space and the manifold, ensures that the retracted points preserve desirable properties, such as sufficient descent in the objective function.
However, computing the exponential map can be challenging.
To address this, we introduce retractions, a class of mappings that approximate the exponential map and relax the requirement for a distance-preserving smooth diffeomorphism.

\begin{definition}[Retraction] \label{def:r}
	A continuously differentiable mapping $R: T\M \to \M$ is called a retraction, if for any $x\in\M$, it satisfies that
	\begin{enumerate}
		\item $R_x(0_x) = x$,
		\item $\d R_x(0_x) = \operatorname{id}_{T_x\M}$,
	\end{enumerate}
	where $R_x$ is the restriction of $R$ to $\tm{x}$, $0_x$ is the zero element of $\tm{x}$, and $\d R_x(0_x)$ is the differential (pushforward) of $R_x$ at $0_x$.
\end{definition}

The definition of a retraction shows that it provides a first-order approximation of the exponential map, which also satisfies $\exp_{x}(0_{x}) = x$ and $\d\exp_{x}(0_{x}) = \operatorname{id}_{\tm{x}}$.
\re{In other words,} $R_x$ needs to map the point~$x + \xi$ in the tangent space back to the manifold in a way that preserves the distance between the two points as a higher-order term compared to the magnitude of the tangent vector~$\xi$.
This condition ensures that $x + \xi$ and $R_x(\xi)$ exhibit similar properties. For example, if a continuous objective function exhibits a sufficient decrease for $x + \xi$, it should also demonstrate an acceptable decrease for $R_x(\xi)$.

To achieve a quadratic local convergence rate, the retraction needs to be $C^{2}$ \cite{absilOptimizationAlgorithmsMatrix2008,absilTrustRegionMethodsRiemannian2007}.
However, if the gradient field of the objective function is not strongly semismooth, the quadratic local convergence rate may not be obtained.
Hence, to be more general and consistent with the semismoothness condition,
we only require the retraction to admit a \holder continuous differential, rather than being twice continuously differentiable. 

{%
	\begin{remark}[$C^{1,\nu}$ retractions]
		We present two examples of $C^{1,\nu}$ retractions, which are retractions with a $\nu$-order \holder continuous differential, on the real coordinate space $\R^{n}$.
		By introducing two $C^{2,\nu}$ Riemannian metrics, we naturally have their associated exponential maps as $C^{1,\nu}$ retractions.
	First, note that on $\R^{n}$, a metric (a family of inner products) can be represented by a mapping $g\colon \R^{n} \to S_{+}^{n}$, where $S_{+}^{n}$ is the set of positive definite matrices.
			The resulting inner product at $p$ is $\left< \xi,\eta \right>_{x} = \sum_{i,j}g_{ij}(p)\xi_{i}\eta_{j}$ for any $\xi, \eta \in T_{p}\mathcal{M}$, where $g_{ij}(p)$ is the $(i,j)$-th entry of $g(p)$.
				Then, a toy example of a $C^{2,\mu}$ metric on $\mathcal{M} = \R^{n}$ is given by $g(p) = f(p) \cdot I_{n}$, where $f\colon \R^{n} \to \R$ is $C^{2,\mu}$.
				An example of an $f\in C^{2,\mu}$ can be found in, e.g., \cite[Example 1]{grapiglia2017RegularizedNewton}.

				Another more general example is the induced metric on a hypersurface defined by a function graph $\mathcal{M} = \left\{ (x, f(x)) \mid x \in \R^{n-1} \right\}$, where $f\colon \R^{n-1} \to \R$.
				The Riemannian metric at $p = (x,f(x))\in\M$ induced by the Euclidean metric on $\R^{n}$ is $g_{ij}(p) = \delta_{ij} + \partial_{i}f(x)\partial_{j}f(x)$, where $\delta_{ij}$ is the Kronecker delta \citep{huang2013hypersurfaces,lee2019Geometricrelativity}.
				Therefore, if $f$ is $C^{3,\mu}$, then $g$ is $C^{2,\mu}$, and the exponential map associated with $g$ is $C^{1,\mu}$.
	\end{remark}
}%
We have the following proposition for this class of retractions.

\begin{proposition}\label{prop:r-ss}
	Suppose the differential of $R_{x}$ is $\nu$-order H\"older continuous,
	i.e., there exists $\nu >0$ and $C\ge 0$ such that for any $\xi_1,\xi_2\in\tm{x}$, we have
	$$
		\|P_{R_{x}(\xi_1)R_{x}(\xi_2)}\d R_{x}(\xi_1) - \d R_{x}(\xi_2)\|_{\operatorname{op}} \le C\|\xi_1 - \xi_2\|^{\nu}
		.$$
		Then for any $\xi\in\tm{x}$ \re{such that $R_{x}(\xi), \exp_{x}(\xi)\in \operatorname{NN}(x)$}, we have
	$$
		\dist (R_x(\xi), \exp_x(\xi)) = O \left(\|\xi\|^{1 + \nu}\right).
	$$
\end{proposition}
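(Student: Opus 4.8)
The plan is to compare $R_x$ and $\exp_x$ along the whole ray $t\mapsto t\xi$, $t\in[0,1]$, and use the fact that both maps agree to first order at $0_x$. First I would define the curves $c_1(t) = R_x(t\xi)$ and $c_2(t) = \exp_x(t\xi)$, so that $c_1(0)=c_2(0)=x$ and $c_1'(0)=c_2'(0)=\xi$ (the latter using $\d R_x(0_x)=\operatorname{id}$ and the analogous property of $\exp_x$). The distance $\dist(R_x(\xi),\exp_x(\xi))$ is bounded above by the length of any curve joining the two points; a natural choice is the "vertical" family $s\mapsto \exp_x(s\,\xi)$ compared against $R_x(\xi)$, but more robustly I would bound $\dist(c_1(t),c_2(t))$ by working in normal coordinates around $x$ (legitimate since we restrict to $\operatorname{NN}(x)$, where $\exp_x^{-1}$ is a diffeomorphism), where the Riemannian distance is comparable (up to a constant, with constant $\to 1$ as we shrink the neighborhood) to the Euclidean norm of the coordinate difference. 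In these coordinates $c_2(t)$ is literally the straight segment $t\xi$, and the problem reduces to estimating $\|R_x(t\xi) - t\xi\|$ in the chart, i.e. to a Euclidean statement about a $C^{1,\nu}$ map whose value and derivative at $0$ match the identity map.

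The core estimate is then the following one-variable computation. Let $F(t) = R_x(t\xi) - t\xi$ (read in the chart, or intrinsically via parallel transport back to $T_x\M$). Then $F(0)=0$ and $F'(0)=0$, and $F'(t) = \d R_x(t\xi)[\xi] - \xi$. The Hölder hypothesis on $\d R_x$ — after transporting $\d R_x(t\xi)$ back to $T_x\M$, which costs only lower-order terms or can be absorbed since parallel transport along short geodesics is itself Lipschitz-close to the identity — gives $\|F'(t)\| = \|\d R_x(t\xi)[\xi] - \d R_x(0_x)[\xi]\| \le C\|t\xi\|^{\nu}\|\xi\| = C t^{\nu}\|\xi\|^{1+\nu}$. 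Integrating from $0$ to $1$ yields $\|F(1)\| \le \frac{C}{1+\nu}\|\xi\|^{1+\nu}$, i.e. $\|R_x(\xi) - \xi\|_{\text{chart}} = O(\|\xi\|^{1+\nu})$. Since $\exp_x(\xi)$ has coordinate representation exactly $\xi$, and since the chart distortion is bounded on $\operatorname{NN}(x)$, this gives $\dist(R_x(\xi),\exp_x(\xi)) = O(\|\xi\|^{1+\nu})$.

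I expect the main obstacle to be bookkeeping the transition between the \emph{intrinsic} statement of the Hölder condition (which involves parallel transport $P_{R_x(\xi_1)R_x(\xi_2)}$ along geodesics between images, and operator norms on different tangent spaces) and the \emph{chart-level} computation, making sure that the parallel-transport factors and the metric-comparison factors only contribute constants and do not secretly degrade the exponent $1+\nu$. Concretely: $P_{R_x(t\xi)\,x}$ differs from the coordinate identification by $O(\dist(R_x(t\xi),x)) = O(t\|\xi\|)$, and when this multiplies a quantity that is already $O(t^{\nu}\|\xi\|^{1+\nu})$ one gets an $O(t^{1+\nu}\|\xi\|^{2+\nu})$ error, which is higher order and harmless; similarly the difference between $\|\cdot\|_x$ and the chart norm on $\operatorname{NN}(x)$ is a bounded multiplicative constant. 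A clean way to handle all of this uniformly is to fix the normal coordinate chart at $x$ once and for all, express every object (the map $R_x$, its differential, the metric) in that chart, note that on the relatively compact $\operatorname{NN}(x)$ all metric coefficients and their first derivatives are bounded, and then carry out the integration argument purely in $\mathbb{R}^n$ with the Euclidean norm, converting back to $\dist$ only at the very end. One should also record that the hidden constant depends on $x$ only through the local geometry (injectivity radius, metric bounds on $\operatorname{NN}(x)$) and on the retraction constant $C$, so that on a compact set $\Omega$ the estimate is uniform — this uniformity is what the later convergence analysis will actually use.
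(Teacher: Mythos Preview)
Your proposal is correct, and the route is genuinely different from the paper's. You linearize in normal coordinates at $x$: there $\exp_x(t\xi)$ is the straight ray $t\xi$, and you control $\|\exp_x^{-1}(R_x(t\xi))-t\xi\|$ by integrating $\|F'(t)\|$, using the Hölder hypothesis on $\d R_x$ together with the fact that parallel transport $P_{R_x(t\xi),x}$ and the chart identification $\d(\exp_x^{-1})(R_x(t\xi))$ differ only by $O(t\|\xi\|)$ on the normal neighborhood. The paper instead stays coordinate-free: it applies the mean value theorem to the scalar function $t\mapsto f(R_x(t\xi))$ with the specific choice $f(p)=\dist(p,\exp_x(\xi))$, then uses the explicit formula $\grad f(x)=-\xi/\|\xi\|$ for the gradient of the radial distance function to see that the zeroth- and first-order terms cancel exactly, leaving only the $O(\|\xi\|^{1+\nu})$ remainder coming from the Hölder bound on $\d R_x$ and the Lipschitz bound on $\grad f$. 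Your approach is the more standard ``compare two $C^{1,\nu}$ curves with matching $1$-jet'' argument and makes the analysis transparent, at the cost of the bookkeeping you correctly flag (chart distortion, parallel transport versus coordinate identification); the paper's trick with the distance function is slicker and intrinsic, avoiding coordinates entirely, but is less obviously generalizable and has its own technicality (differentiability of $f$ along the curve, away from the center $\exp_x(\xi)$). Both arguments implicitly produce an additional $O(\|\xi\|^{2})$ term---your chart-correction term, the paper's $S_2$---so both tacitly use $\nu\le 1$ to absorb it into $O(\|\xi\|^{1+\nu})$; this is worth stating explicitly.
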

\begin{proof}
	For any $\xi$, define the curve $\gamma: t \mapsto R_{x}(t\xi)$. For any smooth function $f$ on the manifold, denote $\hat{f}(t) = f(R_{x}(t\xi))$.
	By the mean value theorem, we know there exists $\tau\in[0,1]$ such that
	\begin{equation}\label{eq:r-ss-1}
		\hat{f}(1) = \hat{f}(0) + \hat{f}'(\tau)
		\!= f(x) + \left<\grad f(y), \gamma'(\tau) \right>
		\!= f(x) + \left<\grad f(y), \d R_{x}(\tau\xi)[\xi]\right>,
	\end{equation}
	where $y = R_{x}(\tau\xi)$, and $\d R_{x}(\tau\xi): \tm{x} \to \tm{y}$.
	Then, we have
	$$
		\begin{aligned}
			  & \left<\grad f(y), \d R_{x}(\tau\xi)[\xi]\right>
			= \left<P_{yx}\grad f(y), P_{yx}\d R_{x}(\tau\xi)[\xi] \right>                                                                                                                                             \\
			= & \left<\grad f(x), \d R_{x}(0_{x})[\xi]\right>                                                       + \underbrace{\left<\grad f(x), P_{yx}\d R_{x}(\tau\xi)[\xi] - \d R_{x}(0_{x})[\xi] \right>}_{S_1} \\
			  & +  \underbrace{\left<P_{yx}\grad f(y) - \grad f(x), P_{yx}\d R_{x}(\tau\xi)[\xi] \right>}_{S_2}.
		\end{aligned}
	$$
	By the H\"older continuity of $\d R_{x}$, $S_1 = O(\|\xi\|^{1+\nu})$; and by the smoothness of $f$, $\|P_{yx}\grad f(y) - \grad f(x)\| = O(\dist(x,y)) = O(\|\xi\|)$, and then $S_2 = O(\|\xi\|^2)$. Then, using $\d R_{x}(0_{x}) = \operatorname{id}_{\tm{x}}$ in \cref{def:r}, we get
	\begin{equation}\label{eq:r-ss-2}
		\left<\grad f(y), \d R_{x}(\tau\xi)[\xi] \right> = \left<\grad f(x), \xi \right> + O(\|\xi\|^{1+\nu}).
	\end{equation}
	Now let $f(p) \coloneqq \dist(p,\exp_{x}(\xi))$.
	\re{
	$f$ is the \textit{radial} distance function on $\operatorname{NN}(\exp_{x}(\xi))$ and is differentiable on $\operatorname{NN}(\exp_{x}(\xi)) \setminus \left\{ \exp_{x}(\xi) \right\}$ \cite[Lemma 6.8]{lee2012Smoothmanifolds}.
	The geodesic connecting $R_{x}(\xi)$ and $\exp_{x}(\xi)$ is unique and minimizing the distance given that both points are in $\operatorname{NN}(x)$. Thus, $f$ is well-defined on $R_{x}(\xi)$.
}
	Combining \cref{eq:r-ss-1,eq:r-ss-2} gives
	\begin{equation}\label{eq:r-ss-3}
		\dist(R_{x}(\xi),\exp_{x}(\xi)) = \hat{f}(1) = \dist(x,\exp_{x}(\xi)) + \left<\grad f(x), \xi \right> + O(\|\xi\|^{1+\nu}).
	\end{equation}
	Then, \re{by \cite[Theorem 6.32]{lee2018IntroductionRiemannian}}, the gradient of the Riemannian distance function gives
  \begin{equation}\label{eq:r-ss-4}
    \left<\grad f(x), \xi\right> = \left<\gamma_-'(\dist(x,\exp_{x}(\xi))),\xi \right> 
  \end{equation}
	\re{where $\gamma_-$ is the unit-speed geodesic from $\exp _{x}(\xi)$ to $x$. Let $\gamma$ be the reverse curve of $\gamma_{-}$, i.e., $\gamma$ is the same curve as $\gamma_{-}$ but with a reverse parametrization from $x$ to $\exp_{x}(\xi)$.
	Then, we know that
	\begin{equation}\label{eq:r-ss-5}
	  \gamma_-'(\dist (x, \exp_{x}(\xi))) = -\gamma'(0) = -\frac{\xi}{\|\xi\|}
	,\end{equation}
	where the negative sign is due to the reverse parametrization of $\gamma$, and the normalization is because $\gamma$ and $\gamma_{-}$ are unit-speed geodesics.
	Combining \cref{eq:r-ss-4,eq:r-ss-5} gives
	\begin{equation}\label{eq:r-ss-6}
	  \left<\grad f(x), \xi\right> = -\|\xi\| = -\left<\frac{\xi}{\|\xi\|},\xi \right> = -\dist(x,\exp_{x}(\xi))
	.\end{equation}
}
  Combining \cref{eq:r-ss-3,eq:r-ss-6} gives
  $$
    \dist(R_{x}(\xi),\exp_{x}(\xi))
    = O(\|\xi\|^{1+\nu}).
  $$
\end{proof}

Using the retraction, we can first solve the model problem of a trust region method on the tangent space and then map the iteration point back onto the manifold.
The model problem of a Riemannian trust region method can be defined similarly to \eqref{eq:etr} using the Riemannian gradient, Hessian, inner product, and norm \cite{absilTrustRegionMethodsRiemannian2007}.
However, the objective function $\phi$ we consider may not necessarily be twice differentiable. As a result, we propose replacing the Hessian in the model problem with an arbitrary element in the Clarke generalized covariant derivative of the gradient vector field. That is, at each iteration, we choose an arbitrary $H_k \in \pat\grad\phi(x_k)$\footnotemark and define the model problem as follows:
\begin{equation}\label{eq:model}
	\begin{aligned}
		\min_{\eta\in\tm{x_k}} & \quad m_{x_k}(\eta) \coloneqq
		\phi(x_k) + \lang \grad \phi(x_k), \eta\rang + \frac{1}{2} \lang H_{k} \eta,\eta\rang \\
		\text{s.t.} \ \        & \ \  \|\eta\|\le \Delta_k.
	\end{aligned}
\end{equation}

\footnotetext{Note that we sometimes use $H_{x}$ to represent an element in $\pat\grad\phi(x)$ to make it more self-explanatory. Thus, $H_{x_k}$ and $H_k$ both represent an element in $\pat\grad\phi(x_k)$.}

After solving the model problem, we compare the descent in the objective function with that of the model function by computing the relative decrease ratio:
\begin{equation}\label{eq:rho}
	\rho_k = \frac{\phi(x_k) - \phi(R_{x_k}(\eta_k))}{m_{x_k}(0) - m_{x_k}(\eta_k)},
\end{equation}
where $\eta_k$ is the (approximate) solution to the model problem \eqref{eq:model}, and $R$ is the chosen retraction. If $\rho_k$ is relatively large, we accept $R_{x_k}(\eta_k)$ as the next iteration point. The remaining steps of our semismooth Riemannian trust region method are the same as in a vanilla trust region method \citep{nocedalNumericalOptimization2006}. We present our algorithm in Algorithm~\ref{alg:sstr}.

For the model problem, any approximate method with an appropriate termination condition can be used. For example, we use the truncated conjugate gradient (TCG) method \cite{steihaugConjugateGradientMethod1983,toint1981efficientsparsity} with the following stopping criterion
\begin{equation} \label{eq:stop}
	\|r_{j+1}\| \le \|r_0\|\min \{\|r_0\|^\theta, \epsilon\},
\end{equation}
where $\epsilon,\theta > 0$.
For completeness and convenience of subsequent analysis, we present the TCG method in Algorithm~\ref{alg:tcg}.
Besides stopping criterion \eqref{eq:stop}, Algorithm~\ref{alg:tcg} also terminates when one of two truncation conditions (lines 3 and 8) is satisfied and returns the truncated tangent vector $\eta_k = \xi_{j} + \tau\delta_{j}$, with $\tau$ calculated as follows:
\begin{equation} \label{eq:tau}
	\tau(\xi_j, \del_j, \Delta_k) = \frac{-\lang\xi_j, \delta_{j}\rang+\sqrt{\lang\xi_j, \delta_{j}\rang^{2}+\left(\Delta_k^{2}-\lang\xi_j, \xi_j\rang\right) \lang\delta_{j}, \delta_{j}\rang}}{\lang\delta_{j}, \delta_{j}\rang}.
\end{equation}

\begin{algorithm}[!th]
	\caption{Semismooth Riemannian Trust Region Method}\label{alg:sstr}
	\SetKw{Para}{parameters}
	\SetKw{Input}{input}
	\Para{$\bar{\Delta} > 0, \Delta_0 \in (0,\bar{\Delta})$, and $\rho'\in[0,1/4)$}\;
	\Input{initial point $x_0\in\mathcal{M}$}\;
	\For{$k=0,1,\dots$}{
		Obtain $\eta_k$ by approximately solving the model problem \eqref{eq:model}, e.g., using Algorithm~\ref{alg:tcg}\;
		Compute $\rho_k$ using \eqref{eq:rho}\;
		\uIf {$\rho_k < 1/4$} {
			$\Delta_{k+1} = \frac{1}{4}\Delta_{k}$
		} \uElseIf {$\rho_k > 3/4$ \textup{and} $\|\eta_k\| = \Delta_k$} {
			$\Delta_{k+1} = \min\{2\Delta_k, \bar{\Delta}\}$
		} \Else {
			$\Delta_{k+1} = \Delta_k$
		}
		\uIf {$\rho_k > \rho'$} {
			$x_{k+1} = R_{x_k}(\eta_k)$
		} \Else {
			$x_{k+1} = x_k$
		}
	}
\end{algorithm}

\begin{algorithm}[!th]
	\caption{Steihaug-Toint Truncated Conjugate Gradient Method}\label{alg:tcg}
	Let $\xi_{0}= 0, r_0 = \grad \phi(x_k), \delta_0 = -r_0$\;
	\For {$j= 0,1,2,\dots$} {
	\If {$\lang \delta_j , H_k \delta_j \rang \le 0$} {
		\Return $\eta_k = \xi_{j}+ \tau\delta_j$, where $\tau$ is computed using \eqref{eq:tau}\;
	}
	Let  $\al_j = \lang r_j, r_j\rang / \lang \del_j, H_k \del _j\rang$\;
	Let $\xi_{j+1} = \xi_{j}+ \al_j\del_j$\;

	\If {$\|\xi_{j+1}\|\ge \Delta_k$ } {
		\Return $\eta_k = \xi_{j}+ \tau\delta_j$, where $\tau$ is computed using \eqref{eq:tau}\;
	}

	Let $r_{j+1} = r_j + \al_j H_k \del_j$\;

	\If {stopping criterion \eqref{eq:stop} is met} {
		\Return $\eta_k = \xi_{j+1}$
	}
	Let $\beta_{j+1} = \lang r_{j+1}, r_{j+1} \rang / \lang r_j, r_j \rang$\;
	Let $\del_{j+1} = -r_{j+1} + \beta_{j+1} \del_j$\;
	}
\end{algorithm}

\section{Convergence Analysis} \label{sec:anlys}
In this section, we present the convergence results of our semismooth Riemannian trust region method (Algorithm \ref{alg:sstr}).
We prove three classical results that are applicable to a smooth Euclidean trust region method.
The first is the global convergence theorem (\cref{thm:global}), which shows that the algorithm converges to a stationary point for any initial point.
The second is the local convergence theorem (\cref{thm:local}), which demonstrates that nondegenerate local minimizers form basins of attraction.
Finally, \cref{thm:rate} establishes the super-linear local convergence rate of our algorithm.

\subsection{Global Convergence}

Before presenting the global convergence theorem, we need some essential lemmas.
Since both the model problem~\cref{eq:model} and Algorithm~\ref{alg:tcg} are defined in a Euclidean space, the first two lemmas apply to our algorithm without requiring any modification.

\begin{lemma}[TCG properties {\citep[Theorem 2.1]{steihaugConjugateGradientMethod1983}}]
	\label{lem:tcg}
	Let $\{\xi_i\}_{i=0}^{j}$
	be the first $j+1$ tangent vectors generated by Algorithm~\ref{alg:tcg} with $j+1$ iterations and $\eta_k$ be the returned tangent vector. Then we have
	\begin{enumerate}
		\item If the truncation conditions and the termination condition are not met, there exists $j$ such that
		      $$
			      \xi_{j+1} = \eta^* = H_k^{-1}(-\grad \phi(x_k)),
		      $$
		      where $H_k$ is the (general/approximated) Hessian passed to the algorithm, and $\eta^*$ is the minimum point of $m$.
		\item Algorithm~\ref{alg:tcg} is a descent algorithm, i.e., $m(\xi_0) \ge m(\xi_1) \ge \dots \ge m(\xi_i) \ge \dots \ge m(\xi_j) \ge m(\eta_k) \ge m(\eta^{*})$.
		\item The norm of $\xi_i$ is monotonically increasing, i.e., $\|\xi_0\| \le \|\xi_1\| \le \dots \le \|\xi_i\| \le \dots\le \|\xi_{j}\| \le \|\eta_k\| \le \|\eta^{*}\|$.
	\end{enumerate}
\end{lemma}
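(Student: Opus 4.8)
The statement to prove is Lemma~\ref{lem:tcg}, which collects three standard properties of the Steihaug--Toint truncated conjugate gradient method (Algorithm~\ref{alg:tcg}): exact recovery of the unconstrained minimizer when no truncation triggers, monotone decrease of the model, and monotone growth of the iterate norm. Since the model problem \eqref{eq:model} and Algorithm~\ref{alg:tcg} live entirely in the Euclidean tangent space $\tm{x_k}$ with inner product $\lang\cdot,\cdot\rang$, the plan is to port the classical Euclidean argument of Steihaug verbatim, using the conjugate-direction structure; no manifold-specific ingredient enters.

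\textbf{Setup.} I would first record the CG invariants produced by the recursion in Algorithm~\ref{alg:tcg} under the assumption that $\lang\delta_j, H_k\delta_j\rang > 0$ for all indices reached (otherwise the first truncation fires). Standard induction gives: the residuals are mutually orthogonal, $\lang r_i, r_j\rang = 0$ for $i\neq j$; the directions are $H_k$-conjugate, $\lang \delta_i, H_k\delta_j\rang = 0$ for $i\neq j$; the residual at step $j$ is orthogonal to all previous directions, $\lang r_j, \delta_i\rang = 0$ for $i<j$; and $\lang r_j,\delta_j\rang = -\lang r_j,r_j\rang$ (from $\delta_j = -r_j + \beta_j\delta_{j-1}$ and the previous orthogonality). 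These are exactly the relations from a textbook such as \cite{nocedalNumericalOptimization2006}, and their proofs are the usual simultaneous induction on $j$.

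\textbf{Part 1.} If neither truncation nor the stopping criterion \eqref{eq:stop} ever triggers, then $H_k\succ 0$ on the relevant Krylov subspace and CG is run to completion; after at most $\dim \tm{x_k}$ steps the residual $r_{j+1} = \grad\phi(x_k) + H_k\xi_{j+1}$ vanishes, so $\xi_{j+1} = -H_k^{-1}\grad\phi(x_k) = \eta^\ast$, the unconstrained minimizer of the strictly convex quadratic $m$. This is immediate from $r_j = \nabla m(\xi_j)$.

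\textbf{Parts 2 and 3 — the crux.} Both monotonicity statements follow from the key algebraic fact that along the CG path the function $j\mapsto m(\xi_j)$ is strictly decreasing and $j\mapsto \|\xi_j\|$ is strictly increasing. For the model decrease: $\xi_{j+1} = \xi_j + \alpha_j\delta_j$ with $\alpha_j > 0$, and a one-line computation gives $m(\xi_{j+1}) - m(\xi_j) = \alpha_j\lang r_j,\delta_j\rang + \tfrac12\alpha_j^2\lang\delta_j,H_k\delta_j\rang = -\tfrac12\alpha_j\lang r_j,r_j\rang < 0$ using $\alpha_j = \lang r_j,r_j\rang/\lang\delta_j,H_k\delta_j\rang$ and $\lang r_j,\delta_j\rang = -\lang r_j,r_j\rang$. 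For the norm growth, the standard trick is to expand $\xi_j = \sum_{i<j}\alpha_i\delta_i$ and compute $\|\xi_{j+1}\|^2 - \|\xi_j\|^2 = 2\alpha_j\lang\xi_j,\delta_j\rang + \alpha_j^2\|\delta_j\|^2$; the main obstacle is showing $\lang\xi_j,\delta_j\rang \ge 0$, which one gets from $\lang\xi_j,\delta_j\rang = \sum_{i<j}\alpha_i\lang\delta_i,\delta_j\rang$ together with the sign relations $\alpha_i>0$ and $\lang\delta_i,\delta_j\rang = \beta_j\lang\delta_i,\delta_{j-1}\rang \ge 0$ (this last inequality itself proved by downward induction using $\lang r_j,\delta_i\rang = 0$), so each term is nonnegative; hence $\|\xi_{j+1}\| > \|\xi_j\|$. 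Finally I would handle the truncated return value $\eta_k = \xi_j + \tau\delta_j$ with $\tau = \tau(\xi_j,\delta_j,\Delta_k)\in(0,\alpha_j]$ chosen either by the boundary formula \eqref{eq:tau} (when $\|\xi_{j+1}\|\ge\Delta_k$ or negative curvature): since $m$ restricted to the ray $t\mapsto \xi_j+t\delta_j$ is a (possibly concave) quadratic whose derivative at $t=0$ is $\lang r_j,\delta_j\rang = -\|r_j\|^2 < 0$, it is decreasing on $[0,\tau]$, giving $m(\xi_j)\ge m(\eta_k)$, and $\|\xi_j + t\delta_j\|$ is increasing in $t$ on $[0,\tau]$ by the same computation as above (since $\lang\xi_j,\delta_j\rang\ge0$), giving $\|\xi_j\|\le\|\eta_k\|$; and trivially $\|\eta_k\|\le\Delta_k\le\|\eta^\ast\|$ when truncation occurs on the boundary, while $m(\eta_k)\ge m(\eta^\ast)$ always since $\eta^\ast$ is the global minimizer. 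I expect the mildly delicate point to be the chain of sign inequalities $\lang\delta_i,\delta_j\rang\ge0$ and $\lang\xi_j,\delta_j\rang\ge0$ — everything else is bookkeeping — but since this is a cited result (\cite[Theorem 2.1]{steihaugConjugateGradientMethod1983}) the cleanest presentation is simply to invoke it, noting that Algorithm~\ref{alg:tcg} is its verbatim instantiation in the Euclidean space $\tm{x_k}$.
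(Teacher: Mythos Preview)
Your proposal is correct, and in fact you go considerably further than the paper does: the paper provides no proof of this lemma at all, merely citing \cite[Theorem~2.1]{steihaugConjugateGradientMethod1983} and remarking that since the model problem \eqref{eq:model} and Algorithm~\ref{alg:tcg} live entirely in the Euclidean space $\tm{x_k}$, the classical result applies verbatim. Your sketch of the underlying CG orthogonality relations, the sign chain $\lang\delta_i,\delta_j\rang\ge 0$, and the handling of the truncated return is a faithful reconstruction of Steihaug's argument, and you yourself note at the end that the cleanest route is simply to invoke the citation --- which is exactly what the paper does.
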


\begin{lemma}[Cauchy decrease inequality {\citep[Lemma 4.3]{nocedalNumericalOptimization2006}}] \label{lem:cauchy}
	Let $\eta_k$ be the tangent vector returned by Algorithm~\ref{alg:tcg}, then the decrease in the model problem~\cref{eq:model} satisfies
	\begin{equation*}
		m_{x_k}(0) - m_{x_k}(\eta_k) \ge \frac{1}{2}\|\grad \phi(x_k)\| \min\left\{\Delta_k, \frac{\|\grad \phi(x_k)\|}{\|H_{x_k}\|}\right\}.
	\end{equation*}
\end{lemma}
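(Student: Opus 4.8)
The model function $m_{x_k}$ and Algorithm~\ref{alg:tcg} are both defined on the Euclidean space $\tm{x_k}$, so the classical Euclidean argument transfers verbatim and the plan is to reduce the claim to the standard \emph{Cauchy point} estimate. Introduce the Cauchy point
$$
\eta_k^C \coloneqq \argmin_{t \ge 0,\ t\|\grad\phi(x_k)\| \le \Delta_k} m_{x_k}\bigl(-t\,\grad\phi(x_k)\bigr),
$$
i.e. the minimizer of the model restricted to the steepest-descent ray inside the trust region. The first thing I would establish is $m_{x_k}(\eta_k) \le m_{x_k}(\eta_k^C)$. This follows by inspecting the first step of Algorithm~\ref{alg:tcg}: since $\xi_0 = 0$ and $\delta_0 = -r_0 = -\grad\phi(x_k)$, the iterate $\xi_1 = \alpha_0\delta_0$ is exactly the unconstrained minimizer of $t\mapsto m_{x_k}(t\delta_0)$ when $\langle\delta_0,H_{x_k}\delta_0\rangle>0$, while the truncated vector $\xi_0+\tau\delta_0$ returned on line~3 or line~8 is exactly the boundary point $-(\Delta_k/\|\grad\phi(x_k)\|)\grad\phi(x_k)$; in each exit branch this point has model value at most $m_{x_k}(\eta_k^C)$, and by the descent property (\cref{lem:tcg}, item~2) the final output obeys $m_{x_k}(\eta_k) \le m_{x_k}(\xi_1) \le m_{x_k}(\eta_k^C)$. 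It therefore suffices to lower bound $m_{x_k}(0) - m_{x_k}(\eta_k^C)$.

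For the Cauchy decrease, write $g = \grad\phi(x_k)$ and $q = \langle H_{x_k}g, g\rangle$, and consider $\psi(t) \coloneqq m_{x_k}(-tg) - m_{x_k}(0) = -t\|g\|^2 + \tfrac12 t^2 q$ on $[0,\Delta_k/\|g\|]$. This is an elementary one-dimensional quadratic minimization over an interval, split by the sign of $q$ and by whether the unconstrained minimizer $t^\star = \|g\|^2/q$ lies inside the trust region. If $q \le 0$, $\psi$ is nonincreasing and the minimum is at $t = \Delta_k/\|g\|$, giving decrease $\ge \Delta_k\|g\|$. If $q>0$ and $t^\star \le \Delta_k/\|g\|$, then $\psi(t^\star) = -\tfrac12\|g\|^4/q$, and using $q \le \|H_{x_k}\|\,\|g\|^2$ this yields decrease $\ge \tfrac12\|g\|^2/\|H_{x_k}\|$. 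If $q>0$ and $t^\star > \Delta_k/\|g\|$, then $q < \|g\|^3/\Delta_k$, and evaluating $\psi$ at $t = \Delta_k/\|g\|$ gives decrease $> \tfrac12\Delta_k\|g\|$. In every case $m_{x_k}(0)-m_{x_k}(\eta_k^C) \ge \tfrac12\|g\|\min\{\Delta_k,\ \|g\|/\|H_{x_k}\|\}$, and combining with the first paragraph finishes the proof.

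The only step requiring genuine care is the reduction in the first paragraph: one must check that $m_{x_k}(\eta_k)\le m_{x_k}(\eta_k^C)$ holds across all four ways Algorithm~\ref{alg:tcg} can terminate (the full CG step followed by later iterations, the negative-curvature truncation on line~3, the trust-region boundary truncation on line~8, and early termination by \eqref{eq:stop}), which amounts to verifying that the sign test on $\langle\delta_0,H_{x_k}\delta_0\rangle$ and the norm comparison $\|\xi_1\|$ versus $\Delta_k$ mirror precisely the case split used to locate the Cauchy point; the descent property of \cref{lem:tcg} then absorbs all subsequent iterations. The Cauchy decrease computation itself is routine. Since the stated inequality is exactly \cite[Lemma~4.3]{nocedalNumericalOptimization2006} transplanted to the tangent space, an acceptable alternative is simply to note the Euclidean reduction and cite that result, together with the standard observation that the Steihaug--Toint iterate never increases the model beyond its first step.
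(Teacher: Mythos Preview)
Your proof is correct and is essentially the standard Cauchy-point argument from \cite[Lemma~4.3]{nocedalNumericalOptimization2006}. The paper itself does not prove this lemma at all: it simply observes that both the model problem and Algorithm~\ref{alg:tcg} live in the Euclidean space $\tm{x_k}$, and therefore cites the Euclidean result directly without modification---precisely the alternative you mention in your final sentence.
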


The following lemma, Taylor's theorem, is of utmost importance in our analysis. While there exist several forms and variations of Taylor's theorem on manifolds, we will only present the ones that are relevant to our analysis. Some other variations can be found in \cite{absilOptimizationAlgorithmsMatrix2008,boumalIntroductionOptimizationSmooth2020a,hiriart-urruty1984GeneralizedHessian}.

\begin{lemma}[Taylor]\label{lem:taylor}
	Suppose $\phi\in C^1(\mathcal{M})$ has a Lipschitz gradient field and $x, y \in \mathcal{M}$. Let $\gamma: [0, 1] \to~\mathcal{M}$ be a geodesic from $x$ to $y$. Then, there exist $\tau_1, \tau_2\in [0, 1]$, and $H_{\tau_2} \in\pat\grad{\phi(\gamma(\tau_2))}$ such that
	\begin{equation}\label{eq:taylor-1}
		\phi(y) - \phi(x) = \lang P^{\tau_1\to0}_\gamma \grad \phi(\gamma(\tau_1)), \gamma'(0)\rang,
	\end{equation}
	\begin{equation}\label{eq:taylor-2}
		\phi(y) - \phi(x) = \lang \grad \phi(x), \gamma'(0)\rang + \frac{1}{2} \lang P^{\tau_2\to 0}_\gamma H_{\tau_2} P^{0\to \tau_2}_\gamma \gamma'(0), \gamma'(0)\rang,
	\end{equation}

	Furthermore, we have
	\begin{equation}
		\label{eq:taylor-3}
		\phi(y) - \phi(x) = \lang\grad \phi(x), \gamma'(0)\rang + O(\dist(x,y)^2),
	\end{equation}
	\begin{equation}
		\label{eq:taylor-4}
		\phi(y) - \phi(x) = \lang \grad \phi(x), \gamma'(0)\rang + \frac{1}{2} \lang H_x \gamma'(0), \gamma'(0)\rang + o(\dist(x,y)^2),
	\end{equation}
	for some $H_x \in \pat\grad{\phi(x)}$.

	Similarly, for Lipschitz and directionally differentiable vector field $X$, there exist $\tau_3 \in [0,1]$, $H_{\tau_3} \in \pat\grad{\phi(\gamma(\tau_3))}$, and $H_x\in\pat\grad{\phi(x)}$ such that
	\begin{equation} \label{eq:taylor-5}
		X(y) = P_\gamma^{0\to 1}\left[X(x) + P_\gamma^{\tau_3\to 0} H_{\tau_3}P_\gamma^{0\to \tau_3}\gamma'(0)\right],
	\end{equation}
	\begin{equation} \label{eq:taylor-6}
		\left\|X(y) - P_\gamma^{0\to 1}\left[X(x) + H_x\gamma'(0)\right] \right\| = o(\dist(x,y)).
	\end{equation}
\end{lemma}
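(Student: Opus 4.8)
The plan is to restrict every quantity to the geodesic $\gamma$ and exploit two standard facts: the metric compatibility of $\nabla$ (so that derivatives of inner products along $\gamma$ behave as in the Euclidean case, using $\nabla_{\gamma'}\gamma'=0$) and the isometry of parallel transport (so that everything can be moved into the single fixed space $T_x\M$). Throughout I write $\hat\phi(t):=\phi(\gamma(t))$ for the scalar statements and $g(t):=P^{t\to0}_\gamma X(\gamma(t))\in T_x\M$ for the vector-field statements, and I use implicitly that all points involved lie in a geodesically convex normal neighborhood, so that e.g. $P^{\tau\to0}_\gamma=P_{\gamma(\tau)x}$ and $\exp_x^{-1}(y)=\gamma'(0)$ are unambiguous.

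Equation \eqref{eq:taylor-1} is immediate: $\hat\phi\in C^1([0,1])$ with $\hat\phi'(t)=\lang\grad\phi(\gamma(t)),\gamma'(t)\rang=\lang P^{t\to0}_\gamma\grad\phi(\gamma(t)),\gamma'(0)\rang$, so the classical mean value theorem supplies $\tau_1$ with $\hat\phi(1)-\hat\phi(0)=\hat\phi'(\tau_1)$. For \eqref{eq:taylor-2}, Lipschitz continuity of $\grad\phi$ makes $\hat\phi'$ Lipschitz, hence absolutely continuous, so $\hat\phi(1)-\hat\phi(0)=\hat\phi'(0)+\int_0^1(1-s)\hat\phi''(s)\,ds$ with $\hat\phi''$ defined a.e.; where it exists, $\hat\phi''(s)$ lies in the interval $\{\lang H\gamma'(s),\gamma'(s)\rang:H\in\pat\grad\phi(\gamma(s))\}$ (at a differentiability point of $\grad\phi$ it equals this value for $H$ the covariant Hessian, which belongs to $\pat\grad\phi(\gamma(s))$ by \cref{def:cgcd}). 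The crux is then a single-point second-order mean value theorem: with $A:=2\int_0^1(1-s)\hat\phi''(s)\,ds$ and $m(s),M(s)$ the minimum and maximum of $\lang H\gamma'(s),\gamma'(s)\rang$ over $H\in\pat\grad\phi(\gamma(s))$, \cref{lem:cgcd} makes $m$ lower and $M$ upper semicontinuous; since $A$ is a convex average of numbers in $[m(s),M(s)]$, the open sets $\{s:M(s)<A\}$ and $\{s:m(s)>A\}$ are disjoint and cannot cover $[0,1]$ (otherwise one of them is all of $[0,1]$, contradicting that $A$ is such an average), so there is $\tau_2$ with $A\in[m(\tau_2),M(\tau_2)]$; convexity of $\pat\grad\phi(\gamma(\tau_2))$ and linearity of $H\mapsto\lang H\gamma'(\tau_2),\gamma'(\tau_2)\rang$ then yield $H_{\tau_2}$ with $\lang H_{\tau_2}\gamma'(\tau_2),\gamma'(\tau_2)\rang=A$, which rearranges to \eqref{eq:taylor-2}. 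From here, \eqref{eq:taylor-3} follows by bounding the quadratic term via local boundedness of $\pat\grad\phi$ (\cref{lem:cgcd}) by $O(\|\gamma'(0)\|^2)=O(\dist(x,y)^2)$, and \eqref{eq:taylor-4} follows because $\gamma(\tau_2)\to x$ as $y\to x$, so upper semicontinuity of $\pat\grad\phi$ at $x$ lets us replace $P^{\tau_2\to0}_\gamma H_{\tau_2}P^{0\to\tau_2}_\gamma$ by some $H_x\in\pat\grad\phi(x)$ up to an operator of norm $\le\epsilon$, contributing only $\epsilon\,\dist(x,y)^2=o(\dist(x,y)^2)$.

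The estimate \eqref{eq:taylor-6} is essentially a restatement of semismoothness. Applying \cref{def:ss} (with $\mu=0$) at $x$ with running point $y$, composing with the isometry $P_{xy}$, and substituting $\exp_x^{-1}(y)=\gamma'(0)$, $\exp_y^{-1}(x)=-P^{0\to1}_\gamma\gamma'(0)$ and $P_{xy}=P^{0\to1}_\gamma$, gives $\|X(y)-P^{0\to1}_\gamma(X(x)+H_yP^{0\to1}_\gamma\gamma'(0))\|\le\epsilon\,\dist(x,y)$ for any $H_y\in\pat X(y)$; since $P^{1\to0}_\gamma H_y P^{0\to1}_\gamma=P_{yx}H_yP_{xy}$, upper semicontinuity of $\pat X$ (\cref{lem:cgcd}) lets us replace it by some $H_x\in\pat X(x)$ at the cost of another $o(\dist(x,y))$ term. (This is the only place that genuinely uses that $X$ is semismooth, which holds for $X=\grad\phi$ under the standing SC$^1$ hypothesis.) Finally, \eqref{eq:taylor-5} starts exactly like \eqref{eq:taylor-1}--\eqref{eq:taylor-2}: $g$ is Lipschitz, so $g(1)-g(0)=\int_0^1 g'(t)\,dt$ with $g'(t)=P^{t\to0}_\gamma H_{\gamma(t)}P^{0\to t}_\gamma\gamma'(0)$ a.e. for some $H_{\gamma(t)}\in\pat X(\gamma(t))$.

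The main obstacle is closing the gap in \eqref{eq:taylor-5}: the intermediate-value argument used for \eqref{eq:taylor-2} is genuinely one-dimensional and does not carry over to the $T_x\M$-valued integral $\int_0^1 g'(t)\,dt$ once $\dim\M\ge2$. The fundamental theorem of calculus only guarantees that this integral lies in $\overline{\mathrm{conv}}\bigcup_{\tau\in[0,1]}\{P^{\tau\to0}_\gamma H P^{0\to\tau}_\gamma\gamma'(0):H\in\pat X(\gamma(\tau))\}$ — equivalently, a finite convex combination of such terms by Carathéodory — and collapsing this to the image over a single $\tau_3$ would require an extra argument that is not available in general. I would therefore establish \eqref{eq:taylor-5} in this convex-hull form (which reduces to the single-$\tau_3$ statement when $\dim\M=1$ by the reasoning above), and note that the downstream analysis only ever uses its consequences — namely the operator-norm bound $\|P^{1\to0}_\gamma X(y)-X(x)\|\le\big(\sup_{\tau\in[0,1]}\sup_{H\in\pat X(\gamma(\tau))}\|H\|_{\mathrm{op}}\big)\dist(x,y)$ and the estimate \eqref{eq:taylor-6} — both of which follow directly from a Lebourg-type mean value inclusion together with \cref{lem:cgcd}.
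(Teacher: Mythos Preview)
Your treatment of \eqref{eq:taylor-1}, \eqref{eq:taylor-2}, and \eqref{eq:taylor-4} matches the paper's (which outsources \eqref{eq:taylor-2} to \cite{zhou2022semismooth}, so your direct intermediate-value argument is in fact more complete). For \eqref{eq:taylor-3} the paper takes a slightly shorter path, going directly from \eqref{eq:taylor-1} and Lipschitz continuity of $\grad\phi$ --- bounding $\lang P_\gamma^{\tau_1\to0}\grad\phi(\gamma(\tau_1))-\grad\phi(x),\gamma'(0)\rang$ by $L\dist(x,y)^2$ --- rather than through \eqref{eq:taylor-2}; both routes are valid.

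There is one gap: for \eqref{eq:taylor-6} you invoke semismoothness of $X$, but the lemma only assumes $X$ is Lipschitz and directionally differentiable. The paper's ``derived similarly'' means: mirror the passage \eqref{eq:taylor-2}$\to$\eqref{eq:taylor-4}, i.e., start from \eqref{eq:taylor-5} (or its convex-hull form) and use upper semicontinuity of $\pat X$ at $x$ (\cref{lem:cgcd}) to replace each intermediate-point operator by an element of $\pat X(x)$ up to an operator of norm $\le\epsilon$; convexity of $\pat X(x)$ then absorbs the resulting convex combination into a single $H_x$. That route needs no semismoothness. You are right that downstream only $X=\grad\phi$ under the SC$^1$ hypothesis matters, so your argument suffices for the paper's applications, but it does not prove the lemma as stated.

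Your caveat on \eqref{eq:taylor-5} is sharper than the paper: a single-$\tau_3$ vector-valued mean value equality is not generally available, and the paper's ``similarly'' glosses over this. The Lebourg-type convex-hull form you propose is the honest statement, and it is all that the two downstream uses (\cref{lem:iso-min} and \cref{lem:grad-bound}) actually need, since convex combinations of operators obeying the uniform eigenvalue bounds of \cref{lem:cgcd-bound} still obey them.
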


\begin{proof}
	For \cref{eq:taylor-1}, let $\gamma$ be the geodesic from $x$ to $y$ and define $\hat{\phi} \coloneqq \phi\circ\gamma$. Then by the first-order expansion of $\hat{\phi}$, there exists $\tau_1\in[0,1]$ such that
	$$
		\begin{aligned}
			\phi(y) = \hat{\phi}(1) = \hat{\phi}(0) + \hat{\phi}'(\tau_1)
			= & \phi(x) + \left<\grad \phi(\gamma(\tau_1)),\gamma'(\tau_1) \right>                     \\
			= & \phi(x) + \left<P^{\tau_1\to 0}_{\gamma}\grad \phi(\gamma(\tau_1)),\gamma'(0) \right>,
		\end{aligned}
	$$
	where the last equality is from the fact that the tangent vector of a geodesic is parallel along itself (see \cref{def:geo-para}).

	For the proof of \eqref{eq:taylor-2}, please refer to \cite[Lemma 4.1]{zhou2022semismooth}. Let $L$ be the Lipschitz constant of $\phi$'s gradient as in \cref{def:lip}; by \eqref{eq:taylor-1}, we have
	$$\begin{aligned}
			\left\|\phi(y) - \phi(x) - \lang \grad \phi(x), \gamma'(0)\rang \right\|
			 & = \left\|\lang \left(P_{\gamma}^{\tau_{1}\to 0}\grad \phi(\gamma(\tau_1)) - \grad \phi(x) \right), \gamma' (0)\rang\right\| \\
			 & \le \left\|P_{\gamma}^{\tau_{1}\to 0}\grad \phi(\gamma(\tau_1)) - \grad \phi(x) \right\|\|\gamma'(0)\|                      \\
			 & \le L \dist(\gamma(\tau_1), x) \|\gamma'(0)\|                                                                               \\
			 & \le L \dist(x,y)^2,
		\end{aligned}
	$$
	which gives \eqref{eq:taylor-3}.
	By the upper-semicontinuity of the Clarke generalized covariant derivative (Proposition \ref{lem:cgcd}), for any $\epsilon > 0$, when $y$ is near $x$, there exist $H_{x}\in \pat\grad\phi(x)$ and an operator $B$ whose operator norm is no greater than 1, such that
	$$
		P^{\tau_2\to 0}_\gamma H_{\tau_2} P^{0\to \tau_2}_\gamma = H_x + \epsilon B.
	$$
	Therefore
	$$\begin{aligned}
			                                  & \left\|\phi(y) - \phi(x) - \lang \grad \phi(x), \gamma'(0)\rang - \frac{1}{2} \lang H_x\gamma'(0), \gamma'(0)\rang\right\|            \\
			\overset{\cref{eq:taylor-2}}{\le} & \frac{1}{2}\left\|\lang\left(P^{\tau_2\to 0}_\gamma H_{\tau_2} P^{0\to \tau_2}_\gamma - H_x\right)\gamma'(0), \gamma'(0)\rang\right\| \\
			\le                               & \frac{1}{2}\|P^{\tau_2\to 0}_\gamma H_{\tau_2} P^{0\to \tau_2}_\gamma - H_x\|\|\gamma'(0)\|^2                                         \\
			\le                               & \frac{\epsilon}{2}\dist(x,y)^2.
		\end{aligned}$$
	By the arbitrariness of $\epsilon$, we get \eqref{eq:taylor-4}.
	Equations \eqref{eq:taylor-5} and \eqref{eq:taylor-6} can be derived similarly.
\end{proof}

We now present the global convergence theorem, which mirrors \citep[Theorem~4.4]{absilTrustRegionMethodsRiemannian2007}. However, in \cite{absilTrustRegionMethodsRiemannian2007}, two additional assumptions are made on the retraction $R$. We could also adopt the same assumptions and then \citep[Theorem~4.4]{absilTrustRegionMethodsRiemannian2007} directly applies to our setting, as it does not require the second-order differentiability of $\phi$.
Nonetheless, we provide a proof of our theorem with only one assumption: the H\"older continuity of the retraction's differential. This requirement is strictly weaker than the radially Lipschitz continuity assumption made in \citep[Definition~4.1]{absilTrustRegionMethodsRiemannian2007}.

\begin{theorem}[Global convergence]\label{thm:global}
	Let $\{x_k\}$ be the sequence generated by Algorithm~\ref{alg:sstr} with $\rho'\in [0,\frac{1}{4})$. Suppose on the level set $\{x\in \mathcal{M}: \phi(x) \le \phi(x_0) \}$, $\{H_{k}\}$ is uniformly bounded. Then we have
	\begin{equation}\label{eq:glob-1}
		\liminf_{k\to\infty} \|\grad \phi(x_k)\| = 0.
	\end{equation}
	Moreover, if $\rho'\in(0,\frac{1}{4})$ and the retraction $R$ has a $\nu$-\holder continuous differential, we have
	\begin{equation}\label{eq:glob-2}
		\lim_{k\to\infty} \|\grad \phi(x_k)\| = 0.
	\end{equation}
\end{theorem}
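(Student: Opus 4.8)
The plan is to follow the classical two-part argument for trust-region global convergence (as in \cite[Theorem 4.4]{absilTrustRegionMethodsRiemannian2007} and \cite[Chapter 4]{nocedalNumericalOptimization2006}), adapting each step to the semismooth Riemannian setting and, crucially, to the weaker H\"older assumption on the retraction. For the first claim \eqref{eq:glob-1}, I would argue by contradiction: suppose $\|\grad\phi(x_k)\|\ge\varepsilon>0$ for all large $k$. The key estimate is a bound on $|\rho_k-1|$. Write $\rho_k-1 = \frac{(\phi(R_{x_k}(\eta_k))-m_{x_k}(\eta_k)) - (\phi(x_k)-m_{x_k}(0))}{m_{x_k}(0)-m_{x_k}(\eta_k)}$, note the denominator is bounded below by the Cauchy decrease inequality (\cref{lem:cauchy}) as $\tfrac12\varepsilon\min\{\Delta_k,\varepsilon/\beta\}$ where $\beta$ is the uniform bound on $\|H_k\|$, and estimate the numerator. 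Here the two-point comparison must be split: first compare $\phi(R_{x_k}(\eta_k))$ with $\phi(\exp_{x_k}(\eta_k))$ using \cref{prop:r-ss}, which costs only $O(\|\eta_k\|^{1+\nu})$; then compare $\phi(\exp_{x_k}(\eta_k))$ with the model $m_{x_k}(\eta_k)$ using Taylor's theorem \eqref{eq:taylor-4} along the geodesic, which costs $o(\|\eta_k\|^2)$ plus the term from replacing $H_{x_k}$ inside the model (handled by \eqref{eq:taylor-2} and local boundedness of $\pat\grad\phi$). Since $\|\eta_k\|\le\Delta_k$, the numerator is $o(\Delta_k^2)+O(\Delta_k^{1+\nu})$, so $|\rho_k-1|\to 0$ whenever $\Delta_k\to 0$; standard trust-region bookkeeping then shows $\Delta_k$ cannot shrink indefinitely, steps are eventually accepted with $\rho_k>\tfrac14$, and summing the guaranteed decreases $\phi(x_k)-\phi(x_{k+1})\ge\tfrac14(m_{x_k}(0)-m_{x_k}(\eta_k))\ge c\varepsilon\min\{\Delta_k,\varepsilon/\beta\}$ against the lower boundedness of $\phi$ forces $\Delta_k\to 0$, a contradiction.

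For the stronger claim \eqref{eq:glob-2} under $\rho'\in(0,\tfrac14)$, I would again argue by contradiction, supposing $\limsup_k\|\grad\phi(x_k)\|=2\varepsilon>0$. By \eqref{eq:glob-1} the gradient norm drops below $\varepsilon$ infinitely often, so one can extract, for infinitely many indices, a stretch of consecutive iterates $\ell(k)<\dots<k$ along which $\|\grad\phi(x_i)\|\ge\varepsilon$ while $\|\grad\phi(x_{\ell(k)})\|<\varepsilon$ — wait, set it up the standard way: pick indices $m_j<n_j$ with $\|\grad\phi(x_{m_j})\|\ge 2\varepsilon$, $\|\grad\phi(x_{n_j})\|<\varepsilon$, and $\|\grad\phi(x_i)\|\ge\varepsilon$ for $m_j\le i<n_j$. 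On each such block every step that moves is accepted with $\rho_k>\rho'>0$, so $\phi(x_i)-\phi(x_{i+1})\ge\rho'\cdot\tfrac12\varepsilon\min\{\Delta_i,\varepsilon/\beta\}$, which together with $\sum_i(\phi(x_i)-\phi(x_{i+1}))<\infty$ forces $\sum_{i=m_j}^{n_j-1}\Delta_i\to 0$ as $j\to\infty$. The total displacement obeys $\dist(x_{m_j},x_{n_j})\le\sum_{i=m_j}^{n_j-1}\dist(x_i,x_{i+1})\le\sum_{i=m_j}^{n_j-1}c\|\eta_i\|\le c\sum_{i=m_j}^{n_j-1}\Delta_i\to 0$; here one needs that the retraction does not expand distances by more than a constant factor on the relevant compact level set (follows from $\d R_x(0_x)=\mathrm{id}$ plus continuity, or directly from \cref{prop:r-ss} comparing with $\exp$), so $x_{n_j}$ and $x_{m_j}$ get arbitrarily close. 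Continuity of $\grad\phi$ (Lipschitz, hence uniformly continuous on the compact level set) then contradicts $\|\grad\phi(x_{m_j})\|\ge 2\varepsilon$ versus $\|\grad\phi(x_{n_j})\|<\varepsilon$.

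The main obstacle, and the place where this proof genuinely departs from \cite{absilTrustRegionMethodsRiemannian2007}, is the numerator estimate $|\phi(R_{x_k}(\eta_k)) - m_{x_k}(\eta_k)| = o(\Delta_k^2) + O(\Delta_k^{1+\nu})$ under only H\"older-$\nu$ continuity of $\d R$. The clean way to organize it is exactly the two-step split above: \cref{prop:r-ss} absorbs the retraction's inexactness into an $O(\|\eta_k\|^{1+\nu})$ term on the manifold (this is why the weaker retraction assumption suffices — the bound degrades gracefully from $O(\|\eta_k\|^2)$ to $O(\|\eta_k\|^{1+\nu})$, and $1+\nu>1$ is all that is needed to beat the linear term $\|\grad\phi(x_k)\|\Delta_k$ in the ratio once $\Delta_k$ is small), and then the geodesic Taylor estimate \eqref{eq:taylor-4} handles the intrinsic comparison with the model, where I must be careful that the $H$ appearing in \eqref{eq:taylor-4} is \emph{some} element of $\pat\grad\phi(x_k)$ possibly different from the chosen $H_k$ — but their difference applied to $\eta_k$ is controlled by the uniform bound $2\beta$ on $\pat\grad\phi$ over the level set, contributing only an $O(\Delta_k^2)$ term, which again is dominated. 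A secondary technical point is ensuring all the geodesics, inverse exponentials, and the retracted points stay inside geodesically convex normal neighborhoods; this is guaranteed by restricting attention to the compact level set $\{\phi\le\phi(x_0)\}$, whose injective radius is bounded below, together with the fact that accepted steps have $\|\eta_k\|\le\Delta_k\le\bar\Delta$, so shrinking $\bar\Delta$ if necessary (or noting $\Delta_k\to 0$ along the contradiction hypothesis) keeps everything local — exactly the standing convention adopted in \cref{sec:pre}.
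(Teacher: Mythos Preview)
Your proposal is correct, and for \eqref{eq:glob-2} it coincides with the paper's argument (both reduce to establishing $\|\xi\|\ge C\dist(x,R_x(\xi))$ via \cref{prop:r-ss}); but for \eqref{eq:glob-1} you take a genuinely different route. The paper does \emph{not} split through $\exp_{x_k}$: it applies the mean value theorem directly along the retraction curve $t\mapsto R_{x_k}(t\eta_k)$, writing $\phi(R_{x_k}(\eta_k))-\phi(x_k)=\langle\grad\phi(\gamma(\tau)),\d R_{x_k}(\tau\eta_k)[\eta_k]\rangle$, and then invokes the H\"older continuity of $\d R_{x_k}$ together with the Lipschitz property of $\grad\phi$ to reach $|m_{x_k}(\eta_k)-\phi(R_{x_k}(\eta_k))|\le (C_1\|\grad\phi(x_k)\|+C_4)\|\eta_k\|^{1+\nu}$. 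Your two-step split (retraction error via \cref{prop:r-ss}, then Taylor along the geodesic via \eqref{eq:taylor-2}) is a legitimate and arguably cleaner decomposition, but watch one point: the bound on $|\phi(R_{x_k}(\eta_k))-\phi(\exp_{x_k}(\eta_k))|$ is not a bare $O(\|\eta_k\|^{1+\nu})$ --- the theorem assumes neither that the level set is compact nor that $\|\grad\phi\|$ is bounded, only that $\{H_k\}$ is. The correct constant carries a factor $\|\grad\phi(x_k)\|+O(1)$, exactly as in the paper's bound, and this factor then cancels against the $\|\grad\phi(x_k)\|$ in the Cauchy-decrease denominator; so your approach survives, but you must track it. (Also, your closing phrase ``forces $\Delta_k\to 0$, a contradiction'' is misstated: once $\Delta_k$ is bounded below, the infinitely many steps with $\rho_k\ge\tfrac14$ make the total decrease infinite, and \emph{that} is the contradiction with $\phi$ bounded below.)
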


\begin{proof}
	Our proof is adapted from that of \citep[Theorem~4.2 and 4.4]{absil2006convergence}.
	For \cref{eq:glob-1}, we only need to reestablish the claim that the trust region radius has a positive lower bound if $\liminf_{k\to\infty}\|\grad \phi(x_k)\| \ne 0$. For the remaining proof, please refer to \citep[Theorem~4.2]{absil2006convergence}.
	Similar to the proof of \eqref{eq:taylor-1} in \cref{lem:taylor}, let $\gamma$ be the curve such that $\gamma(t) = R_{x_k}(t\eta_k)$. Then, there exists $\tau\in[0,1]$ such that
	\begin{align}\notag
		\phi(R_{x_k}(\eta_k)) - \phi(x_k)
		= & \left<\grad\phi(\gamma(\tau)), \gamma'(\tau) \right>                                                         \\\notag
		= & \left<\grad\phi(\gamma(\tau)), \d R_{x_k}(\tau\eta_k)[\eta_k] \right>                                        \\\label{eq:glob-3}
		= & \left<P_{\gamma(\tau)x_k}\grad\phi(\gamma(\tau)), P_{\gamma(\tau)x_k}\d R_{x_k}(\tau\eta_k)[\eta_k] \right>.
	\end{align}
	By the \holder continuity of $\d R_{x_k}$, there exist $C_1 \ge 0$ and $\nu>0$ such that
	\begin{equation}\label{eq:glob-4}
		\|\d R_{x_k}(0_{x_k}) - P_{\gamma(\tau)x_k}\d R_{x_k}(\tau\eta_k)\|_{\mathrm{op}} \le C_1\|\tau\eta_k\|^{\nu}.
	\end{equation}
	By the Lipschitz continuity of $\grad\phi$ and \cref{prop:r-ss}, there exists $C_2 \ge 0$ such that
	\begin{align}\notag
		    & \|\grad\phi(x_k) - P_{\gamma(\tau)x_k}\grad\phi(\gamma(\tau))\|                            \\\notag
		\le & L\dist(x_k, R_{x_k}(\tau\eta_k))                                                           \\\notag
		\le & L\dist(x_k, \exp _{x_k}(\tau\eta_k)) + L\dist(\exp_{x_k}(\tau\eta_k), R_{x_k}(\tau\eta_k)) \\\label{eq:glob-5}
		\le & L(\tau\|\eta_k\| + C_2\|\tau\eta_k\|^{1+\nu})
	\end{align}
	Plugging {\cref{eq:glob-4,eq:glob-5}} back into \eqref{eq:glob-3} gives
	$$
		\begin{aligned}
			\quad & \phi(R_{x_k}(\eta_k)) - \phi(x_k)                                                                                          \\
			=     & \left<P_{\gamma(\tau)x_k}\grad\phi(\gamma(\tau)) - \grad\phi(x_k),
			P_{\gamma(\tau)x_k}\d R_{x_k}(\tau\eta_k)[\eta_k]\right>                                                                           \\
			      & + \left<\grad\phi(x_k), (P_{\gamma(\tau)x_k}\d R_{x_k}(\tau\eta_k) - \d R_{x_k}(0_{x_k}))[\eta_k] \right>
			+ \left<\grad\phi(x_k), \eta_k\right>                                                                                              \\
			\le   & L(\tau\|\eta_k\| \!+\! C_2\|\tau\eta_k\|^{1\!+\!\nu})\|P_{\gamma(\tau)x_k}\d R_{x_k}(\tau\eta_k)\|_{\mathrm{op}}\|\eta_k\| \\
			      & +
			\|\grad\phi(x_k)\|\!\cdot\! C_1\|\tau\eta_k\|^{\nu}\!\cdot\! \|\eta_k\| +
			\left<\grad\phi(x_k), \eta_k\right>                                                                                                \\
			\le   & L(\|\eta_k\| + C_2\|\eta_k\|^{1+\nu})(1 + C_1\|\tau\eta_k\|^{\nu})\cdot \|\eta_k\|                                         \\
			      & + C_1\|\grad\phi(x_k)\|\|\eta_k\|^{1+\nu} +
			\left<\grad\phi(x_k), \eta_k\right>                                                                                                \\
			\le   & (C_3 + C_1\|\grad\phi(x_k)\|)\|\eta_k\|^{1+\nu} +
			\left<\grad\phi(x_k), \eta_k\right>,
		\end{aligned}
	$$
	where $C_3 \coloneqq L(\bar{\Delta}^{1-\nu} + C_2\bar{\Delta})(1 + C_1\bar{\Delta}^{\nu}) \ge L(\|\eta_k\|^{1-\nu} + C_2\|\eta_k\|)(1 + C_1\|\tau\eta_k\|^{\nu})$ because $\|\eta_k\|\le \bar{\Delta}$ and $\tau\in[0,1]$, where $\bar{\Delta}$ is the radius cap of the trust region specified in Algorithm~\ref{alg:sstr}.
	Let $\beta$ be the uniform upper bound of $\{ H_k \}$. By the definition of the model problem \eqref{eq:model}, we have
	$$\begin{aligned}
			\left|m_{x_k}(\eta_k) - \phi(R_{x_k}(\eta_k))\right|
			 & \le \left|\frac{1}{2}\lang \eta_k, H_k\eta_k\rang \right| +
			\left| \phi(x_k) + \left<\grad\phi(x_k),\eta_k \right> - \phi(R_{x_k}(\eta_k)) \right| \\
			 & \le \frac{\beta}{2}\|\eta_k\|^2 + (C_3 + C_1\|\grad\phi(x_k)\|)\|\eta_k\|^{1+\nu}   \\
			 & \le (C_1\|\grad\phi(x_k)\| + C_4)\|\eta_k\|^{1+\nu},
		\end{aligned}$$
	where $C_4 = C_3 + \beta\bar{\Delta}^{1-\nu}/2$. Then, by \cref{lem:cauchy}, we get
	$$
		\begin{aligned}
			|\rho_k - 1| = \left|\frac{m_{x_k}(\eta_k) - \phi(R_{x_k}(\eta_k))}{m_{x_k}(0) - m_{x_k}(\eta_k)}\right|
			\le \frac{(C_1\|\grad\phi(x_k)\| + C_4)\|\eta_k\|^{1+\nu}}{\frac{1}{2}\|\grad{\phi(x_k)}\|\min\left\{\Delta_k,\frac{\|\grad{\phi(x_k)\|}}{\|H_{k}\|}\right\}}.
		\end{aligned}
	$$
	Suppose the $\liminf_{k\to \infty}\|\grad\phi(x_k)\|\ne 0$, then there exist $\epsilon> 0$ and $K\in\mathbb{N}$ such that $\|\grad\phi(x_k)\|\ge \epsilon$ for all $k\ge K$. Then, for any $k\ge K$, we have
	$$
		|\rho_k - 1| \le \frac{C_5\|\eta_k\|^{1+\nu}}{\min\left\{\Delta_k, \epsilon /\beta \right\}},
	$$
	where $C_5 = 2(C_1 + C_4 /\epsilon)$.
	Let $\widetilde{\Delta} = \min\{\epsilon/\beta, (2C_{5})^{-1 /\nu}\}$. When $\Delta_k \le \widetilde{\Delta}$, we have $\min\{\Delta_k, \epsilon /\beta\} = \Delta_k$, and
	$$
		|\rho_k - 1| \le \frac{C_5 \Delta_k^{1+\nu}}{\Delta_k} \le C_5 \widetilde{\Delta}^{\nu} \le \frac{1}{2},
	$$
	which indicates that $\rho_k \ge 1/2 > \rho'$. Therefore, by the trust region radius update rule, we can conclude
	$$
		\Delta_{k+1}\begin{cases}
			\ge \Delta_k,                       & \text{ if } \Delta_k \le \widetilde{\Delta}; \\[2ex]
			\ge \frac{1}{4} \widetilde{\Delta}, & \text{ if } \Delta_k > \widetilde{\Delta}.
		\end{cases}
	$$

	That is, we establish the positive lower bound for the trust region radius: $\min\{\Delta_K, \widetilde{\Delta}/4\}$.

	For \cref{eq:glob-2}, we only need to reestablish the claim that there exist positive constants $C$ and $\delta$ such that for all $x\in\M$ and $\xi\in\tm{x}$ with $\|\xi\|\le \delta$, the following inequality holds:
	$$
		\|\xi\| \ge C\dist(x,R_{x}(\xi)).
	$$
	This claim is a direct consequence of \cref{prop:r-ss}. Specifically, there exist $C_1\ge0$ and $\nu>0$ such that
	$$
		\begin{aligned}
			\dist(x,R_{x}(\xi)) \le & \dist(x,\exp_{x}(\xi)) + \dist(\exp _{x}(\xi),R_{x}(\xi)) \\
			\le                     & \|\xi\| + C_1\|\xi\|^{1+\nu}.
		\end{aligned}
	$$
	Choose $\delta$ small enough such that $C_{1}\delta^{\nu} \le 1$, and we obtain
	$$
		\dist(x,R_{x}(\xi)) \le 2\|\xi\|,
	$$
	which implies that $\|\xi\| \ge 1 /2\dist(x,R_{x}(\xi))$ for all $x\in\M$ and $\xi\in\tm{x}$ with $\|\xi\|\le \delta$.
	Please refer to \citep[Theorem~4.4]{absil2006convergence} for other parts of the proof.
\end{proof}

In the statement of the theorem, we require that $\{ H_k \}$ is uniformly bounded. This seemingly strong condition holds under a mild assumption; we state it as a corollary.

\begin{corollary}
If there exists $k\in \mathbb{N}$ such that the level set $\{ x\in\M:\phi(x) \le \phi(x_k) \}$ is compact, then $\{ H_k \}$ is uniformly bounded due to the local boundedness of the Clarke generalized covariant derivative (see Proposition \ref{lem:cgcd}). Consequently, the condition required by Theorem \ref{thm:global} is met.
\end{corollary}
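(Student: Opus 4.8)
The plan is to combine the descent monotonicity of Algorithm~\ref{alg:sstr} with the compactness hypothesis to trap all but finitely many iterates in one fixed compact set, and then apply the local boundedness of $\pat\grad\phi$ supplied by Proposition~\ref{lem:cgcd}. First I would record that the objective values $\phi(x_k)$ are non-increasing: the truncated CG solve returns $\eta_k$ with $m_{x_k}(\eta_k)\le m_{x_k}(0)$ (Lemma~\ref{lem:tcg}, since $\xi_0=0$ is the first iterate of Algorithm~\ref{alg:tcg} and it is a descent method), so $m_{x_k}(0)-m_{x_k}(\eta_k)\ge 0$; hence whenever the step is accepted one has $\phi(x_k)-\phi(x_{k+1})=\rho_k\bigl(m_{x_k}(0)-m_{x_k}(\eta_k)\bigr)\ge 0$ because $\rho_k>\rho'\ge 0$, and otherwise $x_{k+1}=x_k$. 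In particular, for every $j\ge k$ the iterate $x_j$ lies in $\mathcal{L}\coloneqq\{x\in\M:\phi(x)\le\phi(x_k)\}$, which is compact by assumption, and every iterate lies in $\{x\in\M:\phi(x)\le\phi(x_0)\}$.

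The second step upgrades the pointwise bound of Proposition~\ref{lem:cgcd}(2) to a uniform one over $\mathcal{L}$ by a finite-subcover argument: each $x\in\mathcal{L}$ has a radius $\delta_x>0$ and a constant $C_x$ with $\|H\|\le C_x$ for all $y\in B_{\delta_x}(x)$ and $H\in\pat\grad\phi(y)$; covering the compact set $\mathcal{L}$ by finitely many such balls $B_{\delta_{y_1}}(y_1),\dots,B_{\delta_{y_m}}(y_m)$ produces the uniform bound $\beta_1\coloneqq\max_{1\le i\le m}C_{y_i}$, so $\|H_j\|\le\beta_1$ for all $j\ge k$. For the finitely many indices $j<k$, each $\pat\grad\phi(x_j)$ is compact by Proposition~\ref{lem:cgcd}(1), hence norm-bounded, so $\beta_2\coloneqq\max_{0\le j<k}\|H_j\|$ is finite. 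Taking $\beta\coloneqq\max\{\beta_1,\beta_2\}$ gives $\sup_{j\in\NN}\|H_j\|\le\beta$, i.e., $\{H_k\}$ is uniformly bounded, which is precisely the condition required by Theorem~\ref{thm:global}.

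I do not expect a genuine obstacle here: the argument is a routine blend of descent monotonicity and the compactness/covering content of Proposition~\ref{lem:cgcd}. The only point needing a little care is the mismatch between the set $\mathcal{L}=\{\phi\le\phi(x_k)\}$ assumed compact and the set $\{\phi\le\phi(x_0)\}$ named in Theorem~\ref{thm:global}, which can be strictly larger; this is harmless because the proof of Theorem~\ref{thm:global} uses only the boundedness of the \emph{sequence} $\{H_k\}$, not a uniform bound on $\pat\grad\phi$ over the entire $x_0$-level set. The other bookkeeping detail to state cleanly is the split of the index set into the finitely many pre-$k$ iterates (handled by compactness of each single $\pat\grad\phi(x_j)$) and the tail $j\ge k$ (handled by the covering argument on $\mathcal{L}$).
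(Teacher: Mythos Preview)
Your proposal is correct and follows precisely the approach the paper intends: the paper gives no separate proof for this corollary, leaving the argument implicit in the phrase ``due to the local boundedness of the Clarke generalized covariant derivative (see Proposition~\ref{lem:cgcd}),'' and your write-up simply supplies the standard details (descent of Algorithm~\ref{alg:sstr}, finite subcover of the compact level set, and handling the finitely many pre-$k$ iterates separately). Your observation about the mismatch between the $x_0$- and $x_k$-level sets is also apt and correctly resolved.
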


\begin{remark}
	\cref{thm:global} does not require the semismoothness of the gradient field of the objective function. Here, we only utilize the Lipschitz continuity of $\phi$'s gradient field.
	One can also prove the global convergence under a weaker assumption that the objective function is $C^{1}$ and satisfies a modified Kurdyka-Łojasiewicz condition on the manifold, for example, using techniques proposed in \cite{noll2013Convergencelinesearch}.
\end{remark}

\subsection{Local Convergence} \label{sec:local}

Theorem \ref{thm:global} states that the algorithm converges to some stationary point, which may not necessarily be a local minimizer.
Our next goal is to demonstrate that if the algorithm operates near a nondegenerate local minimizer, it will be \textit{attracted} to that local minimizer. To this end, we first introduce the definition of a nondegenerate local minimizer and then present some results highlighting how they shape the landscape of their neighborhoods.

\begin{definition}[Nondegenerate local minimizer]
	We say $x^*$ is a nondegenerate local minimizer of $\phi$, if $\grad \phi(x^*) = 0$ and $H_x$ is postive definite for any $H_x \in \pat\grad \phi(x^*)$.
\end{definition}

In \cref{thm:global}, we assume a uniform upper norm bound for $\{ H_k \}$. In the next lemma, we will demonstrate that near a nondegenerate local minimizer, $\pat\grad\phi$ automatically has some uniform bounds, not only on its norm, but also on its eigenvalues and the norm of its inverse.

\begin{lemma}[Local uniform bounds of the Clarke generalized covariant derivative]\label{lem:cgcd-bound}
	For a nondegenerate local minimizer $x^*$ of $\phi$, there exist $\lambda_1, \lambda_2 > 0$ and $\del > 0$ such that
	$$\begin{aligned}
			\lambda_1       & \le \min\left\{\left<\xi,H\xi\right>: H \in \partial\grad{\phi(x)}, x \in B_\del(x^*), \|\xi\|=1\right\}, \\
			\lambda_2       & \ge \max\left\{ \|H\|: H \in \partial\grad{\phi(x)}, x \in B_\del(x^*)\right\},                           \\
			\lambda_1 ^{-1} & \ge \max\left\{\|H^{-1}\|: H \in \partial\grad{\phi(x)}, x \in B_\del(x^*)\right\}.
		\end{aligned}$$
\end{lemma}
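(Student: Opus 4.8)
The plan is to first establish the three bounds at the single point $x^*$, using the compactness of $\partial\grad\phi(x^*)$ together with the positive-definiteness granted by nondegeneracy, and then to propagate these bounds to a small ball $B_\delta(x^*)$ via the upper semicontinuity of $\partial\grad\phi$ from Proposition~\ref{lem:cgcd}. Throughout, $\delta$ will be taken small enough that $B_\delta(x^*)$ lies in the geodesically convex normal neighborhood of $x^*$, so that $P_{xx^*}$ is well defined and is an isometry for every $x\in B_\delta(x^*)$.

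\emph{Step 1: bounds at $x^*$.} By Proposition~\ref{lem:cgcd}(1), $\partial\grad\phi(x^*)$ is a nonempty compact subset of $\mathcal{L}(T_{x^*}\M)$, and by Definition~\ref{def:cgcd} all of its elements are self-adjoint; by nondegeneracy each such $H_0$ is positive definite. Since $(\xi,H_0)\mapsto\langle\xi,H_0\xi\rangle$ is continuous, the map $H_0\mapsto\lambda_{\min}(H_0)=\min_{\|\xi\|=1}\langle\xi,H_0\xi\rangle$ is continuous and strictly positive on $\partial\grad\phi(x^*)$, hence it attains a positive minimum $a:=\min\{\lambda_{\min}(H_0):H_0\in\partial\grad\phi(x^*)\}>0$. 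Likewise $H_0\mapsto\|H_0\|$ attains a finite maximum $b:=\max\{\|H_0\|:H_0\in\partial\grad\phi(x^*)\}\in[a,\infty)$.

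\emph{Step 2: propagation by upper semicontinuity.} Apply Proposition~\ref{lem:cgcd}(3) with $\epsilon=a/2$ to obtain $\delta>0$ such that for every $x\in B_\delta(x^*)$ and every $H\in\partial\grad\phi(x)$, the conjugated operator $\widetilde H:=P_{xx^*}HP_{x^*x}$ can be written $\widetilde H=H_0+E$ with $H_0\in\partial\grad\phi(x^*)$ and $\|E\|<a/2$. Since $P_{xx^*}$ is an isometry, $\widetilde H$ is self-adjoint, has the same spectrum and operator norm as $H$, and for $\tilde\xi:=P_{xx^*}\xi$ one has $\|\tilde\xi\|=\|\xi\|$ and $\langle\xi,H\xi\rangle=\langle\tilde\xi,\widetilde H\tilde\xi\rangle$. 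For $\|\xi\|=1$ this gives $\langle\xi,H\xi\rangle\ge\langle\tilde\xi,H_0\tilde\xi\rangle-\|E\|\ge a-a/2=a/2$, so $\lambda_1:=a/2$ yields the first inequality; it also shows $\widetilde H$ (hence $H$) is positive definite with $\|H^{-1}\|=\|\widetilde H^{-1}\|=1/\lambda_{\min}(\widetilde H)\le 2/a=\lambda_1^{-1}$, which is the third inequality. For the second, $\|H\|=\|\widetilde H\|\le\|H_0\|+\|E\|\le b+a/2=:\lambda_2$, and $\lambda_1,\lambda_2>0$ as required.

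The only point that needs care is the bookkeeping of the conjugation by parallel transport: one must use that $P_{xx^*}$ is an isometry, so that it preserves inner products, operator norms, self-adjointness, and the spectrum, and that the u.s.c. statement in Proposition~\ref{lem:cgcd}(3) is phrased precisely in terms of the conjugated operators $P_{xx^*}(\cdot)P_{x^*x}$, which is exactly the form in which we need it. Everything else is a routine consequence of compactness of $\partial\grad\phi(x^*)$ and continuity of $H\mapsto\lambda_{\min}(H)$ and $H\mapsto\|H\|$.
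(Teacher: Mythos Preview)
Your proposal is correct and follows essentially the same approach as the paper: both establish eigenvalue and norm bounds at $x^*$ from compactness of $\partial\grad\phi(x^*)$, then apply the upper semicontinuity of Proposition~\ref{lem:cgcd}(3) with $\epsilon$ equal to half the minimum eigenvalue to propagate these bounds to $B_\delta(x^*)$, using the isometry of parallel transport to pass between $T_x\M$ and $T_{x^*}\M$. The constants you obtain ($\lambda_1=a/2$, $\lambda_2=b+a/2$) are exactly the paper's $\lambda_1^*/2$ and $\lambda_2^*+\lambda_1^*/2$.
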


\begin{proof}
	For any $x\in\M$ and any postive definite $H\in\pat\grad\phi(x)$, $H$ and $H^{-1}$ are self-adjoint (see \cref{def:cgcd}). Thus, we have
	$$
		\begin{aligned}
			\min_{\|\xi\|=1} \left<\xi,H\xi \right> = & \lambda_{\min}(H) = (\lambda_{\max}(H^{-1}))^{-1} = \|H^{-1}\|^{-1}, \\
			\max_{\|\xi\|=1} \left<\xi,H\xi \right> = & \lambda_{\max}(H) = \|H\|,
		\end{aligned}
	$$
	where $\lambda_{\min}(H)$ and $\lambda_{\max}(H)$ are the smallest and the largest eigenvalues of $H$, respectively.
	By item~1 in \cref{lem:cgcd}, $\pat\grad\phi(x^*)$ is compact. Then, since all elements in $\pat\grad{\phi(x^*)}$ are positive definite and $\lambda_{\min}$ and $\lambda_{\max}$ are continuous functions on $\mathcal{L}(\tm{x^*})$, there exist $\lambda_1^*, \lambda_{2}^{*} > 0$ such that
	$$
		\begin{aligned}
			\lambda_{1}^{*} & = \min\{\lam_{\min}(H): H \in\pat\grad{\phi(x^*)}\}, \\
			\lambda_{2}^{*} & = \max\{\lam_{\max}(H): H \in\pat\grad{\phi(x^*)}\}.
		\end{aligned}
	$$
	By item 3 in \cref{lem:cgcd}, $\pat\grad\phi(x^*)$ is upper-semicontinuous. Let $\epsilon = \lambda_1^* /2$. Then there exists $\del_1 > 0$ such that for any $H \in \pat\grad{\phi(x)}$ and $x\in B_{\del_1}(x^*)$, there exist $\hat{H} \in \pat\grad{\phi(x^*)}$ and an operator $B$ with norm no greater than 1 such that
	$$
		P_{xx^*}HP_{x^*x} = \hat{H} + \frac{\lam_{1}^*}{2}B.
	$$
	Thus, for any $\xi \in \tm{x^*}\setminus\{ 0 \}$, we have
	$$
		\left<\xi, P_{xx^*}HP_{x^*x}\xi\right> = \left<\xi, \hat{H}\xi\right> + \frac{\lam_1^*}{2}\left<\xi, B\xi\right> \ge \lam_1^*\lang \xi,\xi\rang - \frac{\lam_1^*}{2}\lang \xi,\xi\rang = \frac{\lam_1^*}{2}\lang \xi,\xi\rang.
	$$
	Therefore $\lam_{\min}(H) = \lam_{\min}(P_{xx^*}HP_{x^*x}) \ge \lam_1^*/2$. By the arbitrariness of $H$ and $x$, we get
	$$
		\min\left\{\lam_{\min}(H) : H \in \partial\grad{\phi(x)}, x \in B_{\del_1}(x^*)\right\} \ge \lambda_1 \coloneqq \frac{\lambda_1^*}{2}.
	$$
	Similarly, we can obtain
	$$
		\max\left\{\lam_{\max}(H) : H \in \partial\grad{\phi(x)}, x \in B_{\del_1}(x^*)\right\} \le \lambda_{2} \coloneqq \lambda^{*}_{2} + \frac{\lambda_1^*}{2}.
	$$

	From the above bounds, we know that elements of $\pat\grad\phi(x)$ near $x^*$ are positive definite, and we have
	$$
		\max\{ \|H^{-1}\|: H\in\pat\grad\phi(x), x\in B_{\delta}(x^{*}) \} \le \lambda_{1}^{-1}.
	$$
\end{proof}

\begin{corollary} \label{lem:iso-min}
A nondegenerate local minimizer is an isolated local minimizer.
\end{corollary}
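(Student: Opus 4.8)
The plan is to prove the stronger statement that $x^*$ is the unique stationary point of $\phi$ in a small ball $B_\delta(x^*)$; since every local minimizer of a $C^1$ function on $\M$ satisfies the first-order optimality condition $\grad\phi = 0$, this immediately gives that $x^*$ is the only local minimizer in $B_\delta(x^*)$, i.e., it is isolated. First I would invoke \cref{lem:cgcd-bound} to fix $\lambda_1 > 0$ and $\delta > 0$ such that every $H \in \pat\grad\phi(x)$ with $x \in B_\delta(x^*)$ is self-adjoint, positive definite, with smallest eigenvalue at least $\lambda_1$; shrinking $\delta$ if necessary, $B_\delta(x^*)$ also lies in a geodesically convex normal neighborhood of $x^*$.

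Next, fix $y \in B_\delta(x^*)$ with $y \neq x^*$ and let $\gamma \colon [0,1] \to \M$ be the minimizing geodesic from $x^*$ to $y$, so that $\|\gamma'(0)\| = \dist(x^*, y)$. Applying the second-order Taylor expansion \eqref{eq:taylor-5} to the Lipschitz, directionally differentiable vector field $X = \grad\phi$ along $\gamma$ and using $\grad\phi(x^*) = 0$, there exist $\tau_3 \in [0,1]$ and $H_{\tau_3} \in \pat\grad\phi(\gamma(\tau_3))$ with
\begin{equation*}
	\grad\phi(y) = P_\gamma^{0\to 1}\left[ P_\gamma^{\tau_3\to 0} H_{\tau_3} P_\gamma^{0\to \tau_3}\gamma'(0)\right] = P_\gamma^{0\to 1}\, A\, \gamma'(0), \qquad A \coloneqq P_\gamma^{\tau_3\to 0} H_{\tau_3} P_\gamma^{0\to \tau_3} \in \mathcal{L}(\tm{x^*}).
\end{equation*}
Since $\gamma$ is minimizing, $\dist(\gamma(\tau_3), x^*) = \tau_3 \dist(x^*, y) \le \dist(x^*, y) < \delta$, so $\gamma(\tau_3) \in B_\delta(x^*)$ and $H_{\tau_3}$ has smallest eigenvalue at least $\lambda_1$. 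As $P_\gamma^{0\to\tau_3}$ is an isometry with inverse $P_\gamma^{\tau_3\to 0}$, the conjugate $A$ is self-adjoint on $\tm{x^*}$ with the same spectrum as $H_{\tau_3}$, hence $\|A\xi\| \ge \lambda_1 \|\xi\|$ for all $\xi \in \tm{x^*}$. Using that $P_\gamma^{0\to 1}$ is also an isometry,
\begin{equation*}
	\|\grad\phi(y)\| = \|A\gamma'(0)\| \ge \lambda_1 \|\gamma'(0)\| = \lambda_1 \dist(x^*, y) > 0.
\end{equation*}

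Therefore $\grad\phi(y) \neq 0$ for every $y \in B_\delta(x^*) \setminus \{x^*\}$, so $x^*$ is the unique stationary point, and a fortiori the unique local minimizer, in $B_\delta(x^*)$, which is the assertion. (As a byproduct, applying \eqref{eq:taylor-2} in place of \eqref{eq:taylor-5} gives $\phi(y) - \phi(x^*) \ge \tfrac{\lambda_1}{2}\dist(x^*,y)^2 > 0$ on the same ball, so $x^*$ is in fact a strict local minimizer with a quadratic growth lower bound.) The argument is essentially bookkeeping; the only point that needs care is that $H_{\tau_3}$ lives in $\tm{\gamma(\tau_3)}$ rather than $\tm{x^*}$, so one must transport it back isometrically and check that this conjugation preserves both self-adjointness and the eigenvalue bound — and the bound from \cref{lem:cgcd-bound} is available precisely because the minimizing geodesic keeps $\gamma(\tau_3)$ inside $B_\delta(x^*)$.
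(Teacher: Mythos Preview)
Your proof is correct and follows essentially the same approach as the paper: apply the Taylor expansion \eqref{eq:taylor-5} to $\grad\phi$ along the geodesic from $x^*$, use $\grad\phi(x^*)=0$, and invoke \cref{lem:cgcd-bound} to conclude the resulting expression is nonzero, so $x^*$ is the only stationary point nearby. Your write-up is more explicit than the paper's (checking $\gamma(\tau_3)\in B_\delta(x^*)$, tracking the isometric conjugation, and extracting the quantitative bound $\|\grad\phi(y)\|\ge\lambda_1\dist(x^*,y)$, which in fact reproves \cref{lem:grad-bound}), but the underlying argument is the same.
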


\begin{proof}
	Let $x^*$ be a nondegenerate local minimizer. By \eqref{eq:taylor-5} in \cref{lem:taylor} and \cref{lem:cgcd-bound}, there exists a neighborhood $U$ of $x^*$ such that for any $x \in U$,
	$$
		\grad{\phi(x)} = P_{\gamma}^{0\to 1}[\grad \phi(x^*) + P_\gamma^{\tau\to 0}H_{\tau}P_{\gamma}^{0\to\tau}\gamma'(0)] = P_{\gamma}^{\tau\to 1} H_{\tau}P_{\gamma}^{0\to \tau}\gamma'(0) \ne 0,
	$$
	where $\gamma$ is a geodesic joining $x^*$ and $x$, \re{$\tau\in[0,1]$, and $H_{\tau}\in\pat\grad\phi(\gamma(\tau))$}.
	Therefore, $x^*$ is the only stationary point in $U$.
\end{proof}

\begin{corollary} \label{lem:grad-bound}
Let $x^*$ be a nondegenerate local minimizer of $\phi$. Let $\lambda_1,\lambda_2$, and neighborhood $U$ be specified in \cref{lem:cgcd-bound}. We have the following relationship for any $x\in U$:
$$
	\lambda_1 \dist(x,x^*) \le \|\grad{\phi(x)}\| \le \lambda_2 \dist(x,x^*).
$$
\end{corollary}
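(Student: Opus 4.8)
The plan is to use Taylor's theorem for vector fields, specifically \eqref{eq:taylor-5} from \cref{lem:taylor}, to express $\grad\phi(x)$ in terms of a generalized covariant derivative acting on the geodesic tangent, and then to apply the eigenvalue bounds from \cref{lem:cgcd-bound}. Concretely, for $x\in U$ let $\gamma$ be the minimizing geodesic from $x^*$ to $x$ (which lies in the normal neighborhood, so it is unique). By \eqref{eq:taylor-5} there exist $\tau\in[0,1]$ and $H_\tau\in\pat\grad\phi(\gamma(\tau))$ such that
$$
\grad\phi(x) = P_\gamma^{0\to 1}\bigl[\grad\phi(x^*) + P_\gamma^{\tau\to 0}H_\tau P_\gamma^{0\to\tau}\gamma'(0)\bigr] = P_\gamma^{\tau\to 1}H_\tau P_\gamma^{0\to\tau}\gamma'(0),
$$
using $\grad\phi(x^*)=0$. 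Since parallel transport is an isometry, $\|\grad\phi(x)\| = \|H_\tau P_\gamma^{0\to\tau}\gamma'(0)\|$, and $\|P_\gamma^{0\to\tau}\gamma'(0)\| = \|\gamma'(0)\| = \dist(x^*,x)$ because $\gamma$ is a minimizing geodesic parametrized on $[0,1]$.

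The next step is to bound $\|H_\tau P_\gamma^{0\to\tau}\gamma'(0)\|$ from both sides. We need $\gamma(\tau)\in B_\delta(x^*)$ so that \cref{lem:cgcd-bound} applies; this holds after possibly shrinking $U$ to lie inside $B_\delta(x^*)$, since the geodesic from $x^*$ to $x$ stays within distance $\dist(x^*,x)$ of $x^*$. Writing $v = P_\gamma^{0\to\tau}\gamma'(0)\in T_{\gamma(\tau)}\M$ with $\|v\| = \dist(x^*,x)$, the upper bound $\lambda_2$ on $\|H_\tau\|$ gives $\|H_\tau v\|\le\lambda_2\|v\| = \lambda_2\dist(x^*,x)$. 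For the lower bound, I use that $H_\tau$ is self-adjoint and positive definite with smallest eigenvalue at least $\lambda_1$ (equivalently $\|H_\tau^{-1}\|\le\lambda_1^{-1}$), so $\|H_\tau v\|\ge\lambda_1\|v\| = \lambda_1\dist(x^*,x)$. Combining with $\dist(x,x^*) = \dist(x^*,x)$ yields the claimed two-sided estimate.

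There is no serious obstacle here; the only points requiring a little care are (i) ensuring the intermediate point $\gamma(\tau)$ where the Clarke derivative is evaluated lies in the ball $B_\delta(x^*)$ on which the uniform bounds hold, which is handled by shrinking $U$, and (ii) justifying that $\|\gamma'(0)\|$ equals $\dist(x^*,x)$, which follows from the completeness of $\M$ and the restriction to normal neighborhoods established in \cref{sec:pre}. Everything else is isometry of parallel transport plus the elementary fact that a self-adjoint positive operator with eigenvalues in $[\lambda_1,\lambda_2]$ maps the unit sphere into the annulus of radii $\lambda_1$ and $\lambda_2$.
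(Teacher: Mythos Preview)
Your proposal is correct and follows essentially the same approach as the paper: apply \eqref{eq:taylor-5} with $\grad\phi(x^*)=0$, use that parallel transport is an isometry so $\|\grad\phi(x)\|=\|H_\tau P_\gamma^{0\to\tau}\gamma'(0)\|$, and then invoke the eigenvalue bounds of \cref{lem:cgcd-bound} together with $\|\gamma'(0)\|=\dist(x,x^*)$. Your added care about $\gamma(\tau)\in B_\delta(x^*)$ is a detail the paper leaves implicit, but otherwise the arguments coincide.
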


\begin{proof}
	By \eqref{eq:taylor-5} in \cref{lem:taylor}, there exists $\tau\in[0,1]$ such that
	$$
		\|\grad{\phi(x)}\|\!=\!\|\grad{\phi(x^*)} + P_\gamma^{\tau\to0}H_{\gamma(\tau)}P_{\gamma}^{0\to\tau}\!\gamma'(0)\| \!=\!\|P_\gamma^{\tau\to0}H_{\gamma(\tau)}P_{\gamma}^{0\to\tau}\!\gamma'(0)\|,
	$$
	where $\gamma$ is the geodesic connecting $x,x^{*}$.
	Then, by \cref{lem:cgcd-bound}, we get
	$$
		\|\grad\phi(x)\| \begin{cases}
			\ge \min_{\|\xi\|=1} \|H_{\gamma(\tau)}\xi\|\|\gamma'(0)\|
			\ge \lambda_1\dist(x,x^*), \\
			\le \max_{\|\xi\|=1}\|H_{\gamma(\tau)}\xi\|\|\gamma'(0)\| \le \lambda_2\dist(x,x^*).
		\end{cases}
	$$
\end{proof}

We will now present and prove the local convergence theorem. Our proof is adapted from \citep[Theorem 4.12]{absilTrustRegionMethodsRiemannian2007}.

\begin{theorem}[Local convergence] \label{thm:local}
	Let $x^*$ be a nondegenerate local minimizer of $\phi$. Suppose the retraction $R$ has a $\nu$-\holder continuous differential near $x^*$. Then there exists a neighborhood $U$ of $x^*$ such that for any $x_0 \in U$, $\{x_k\}$ generated by Algorithm~\ref{alg:sstr} converges to $x^*$.
\end{theorem}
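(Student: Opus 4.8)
The plan is to combine a standard ``trap plus convergence'' argument: first show that once an iterate lands in a suitably small ball around $x^*$ on which the level sets are favorable, the iterates cannot escape and, together with the global convergence statement \cref{thm:global}, must converge to the only stationary point in that ball, namely $x^*$ (by \cref{lem:iso-min}). First I would invoke \cref{lem:cgcd-bound} and \cref{lem:grad-bound} to fix a radius $\delta_0$ and constants $\lambda_1,\lambda_2$ so that on $B_{\delta_0}(x^*)$ every $H\in\pat\grad\phi(x)$ is positive definite with eigenvalues in $[\lambda_1,\lambda_2]$, $x^*$ is the unique stationary point, and $\lambda_1\dist(x,x^*)\le\|\grad\phi(x)\|\le\lambda_2\dist(x,x^*)$. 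Then I would pick a smaller sublevel-type neighborhood $U$: choose $\delta_1\le\delta_0$ and set $U = \{x\in B_{\delta_1}(x^*) : \phi(x) < \inf_{\dist(y,x^*)=\delta_1}\phi(y)\}$, which is nonempty because $\phi$ is continuous and $x^*$ is a strict (indeed isolated) minimizer. The role of $U$ is that any continuous path along which $\phi$ does not increase and which starts in $U$ stays inside $B_{\delta_1}(x^*)$.

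Next I would argue the trapping step. Suppose $x_k\in U$. If $\rho_k\le\rho'$ then $x_{k+1}=x_k\in U$ trivially. If $\rho_k>\rho'$, then $\phi(x_{k+1}) = \phi(R_{x_k}(\eta_k)) \le \phi(x_k) < \inf_{\dist(y,x^*)=\delta_1}\phi(y)$, since $\rho'\ge 0$ and the model decrease $m_{x_k}(0)-m_{x_k}(\eta_k)$ is nonnegative by the Cauchy decrease inequality \cref{lem:cauchy}. So $\phi(x_{k+1})$ is strictly below the $\phi$-value on the sphere of radius $\delta_1$. To conclude $x_{k+1}\in B_{\delta_1}(x^*)$ I would use that the retraction curve $t\mapsto R_{x_k}(t\eta_k)$ is continuous from $x_k$ (inside the ball) and ends at $x_{k+1}$; if it left $B_{\delta_1}(x^*)$ it would cross the sphere $\dist(\cdot,x^*)=\delta_1$, but \cref{eq:taylor-1} (or the mean value form used in the proof of \cref{prop:r-ss}) shows $\phi$ along this curve changes by $O(\|\eta_k\|)$, so for $\|\eta_k\|$ small enough — which is guaranteed by shrinking $\bar\Delta$ or noting $\|\eta_k\|\le\Delta_k$ and that the radius stays bounded — the value on the curve never reaches $\inf_{\dist(y,x^*)=\delta_1}\phi(y)$, a contradiction. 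Hence $x_{k+1}\in U$, and by induction the whole tail $\{x_k\}_{k\ge k_0}$ stays in $B_{\delta_1}(x^*)$ once some $x_{k_0}\in U$.

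Finally I would close the convergence. Since $\{x_k\}$ is eventually confined to the compact set $\overline{B_{\delta_1}(x^*)}$, the boundedness hypothesis of \cref{thm:global} is met (via \cref{lem:cgcd} item 2 and the level-set corollary), so $\liminf_k\|\grad\phi(x_k)\|=0$; combined with \cref{lem:grad-bound} this gives $\liminf_k\dist(x_k,x^*)=0$, i.e.\ a subsequence converges to $x^*$. To upgrade this to full convergence I would use that $\{\phi(x_k)\}$ is monotonically nonincreasing and bounded below, hence convergent, and that every accumulation point of $\{x_k\}$ lies in $\overline{B_{\delta_1}(x^*)}$ and is stationary (by the standard trust-region argument, since the accepted-step decrease forces $\|\grad\phi(x_k)\|\to 0$ along any convergent subsequence when the radius is bounded below — or more directly: any accumulation point is stationary and $x^*$ is the only stationary point in $\overline{B_{\delta_1}(x^*)}$ by \cref{lem:iso-min}, after possibly shrinking $\delta_1$). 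Therefore $x_k\to x^*$. The neighborhood $U$ constructed above is the desired one; to make $x_0\in U$ suffice rather than merely ``some $x_{k_0}\in U$,'' note $x_0\in U$ gives $x_1\in U$ by the trapping step, so the induction starts at $k_0=0$.

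I expect the main obstacle to be the trapping step, specifically ensuring the retraction curve $t\mapsto R_{x_k}(t\eta_k)$ does not ``jump over'' the barrier sphere: this needs a uniform (in $k$) modulus of continuity for $\phi$ along retraction curves, which follows from the Lipschitz gradient and \cref{prop:r-ss} but requires care that the relevant constants ($L$, the H\"older constant of $\d R$, the bound $\bar\Delta$, and the uniform bound on $\{H_k\}$ coming from \cref{lem:cgcd}) are all controlled on $B_{\delta_0}(x^*)$ and independent of the particular iterate. A secondary subtlety is that $U$ must be chosen small enough that all of \cref{lem:cgcd-bound}, \cref{lem:grad-bound}, the H\"older continuity of $R$, and the injectivity-radius/normal-neighborhood restrictions hold simultaneously; this is routine but should be stated explicitly.
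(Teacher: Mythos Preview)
Your overall architecture (trap the sequence in a small ball, then invoke global convergence and uniqueness of the stationary point) matches the paper's, and your closing argument is fine. The gap is in the trapping step, precisely at the point you yourself flag---but you have misdiagnosed the obstacle.

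Your argument for why the retraction curve cannot cross the sphere of radius $\delta_1$ ultimately rests on ``for $\|\eta_k\|$ small enough --- which is guaranteed by shrinking $\bar\Delta$ or noting $\|\eta_k\|\le\Delta_k$ and that the radius stays bounded.'' Neither option works: $\bar\Delta$ is a fixed algorithm parameter you cannot shrink, and ``$\Delta_k$ bounded'' gives no smallness. A uniform modulus of continuity for $\phi$ along retraction curves only yields $|\phi(R_{x_k}(t\eta_k))-\phi(x_k)|\le C\|\eta_k\|$ with a uniform $C$; without an a priori bound making $\|\eta_k\|$ small, this says nothing useful, and the curve can perfectly well go up through the barrier value before coming back down. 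So the path-crossing argument, as written, does not close.

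The missing idea---and what the paper exploits---is that near $x^*$ the generalized Hessian $H_k$ is positive definite (\cref{lem:cgcd-bound}), so the unconstrained minimizer $\eta_k^*=-H_k^{-1}\grad\phi(x_k)$ exists and satisfies $\|\eta_k^*\|\le\|H_k^{-1}\|\,\|\grad\phi(x_k)\|\le c_1 c_2\,\dist(x_k,x^*)$ via \cref{lem:grad-bound}. Then the TCG monotonicity property (\cref{lem:tcg}, item 3) gives $\|\eta_k\|\le\|\eta_k^*\|$, hence $\|\eta_k\|=O(\dist(x_k,x^*))$ \emph{independently of $\Delta_k$}. With this step-size control, a simple triangle inequality (as the paper does: $\dist(x_{k+1},x^*)\le\dist(x_{k+1},x_k)+\dist(x_k,x^*)\le c_3\|\eta_k\|+\dist(x_k,x^*)$) shows $x_{k+1}$ stays in a slightly larger ball, and the level-set condition then puts it back in $U$. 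Once you insert this bound, the barrier/path argument becomes unnecessary.
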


\begin{proof}
	By \cref{lem:iso-min}, there exists $\delta_0 > 0$ such that $x^*$ is the only local minimizer in $B_{\delta_0}(x^*)$.
	By \cref{lem:cgcd-bound}, there exist $c_1, \del_1 > 0$ such that $\|H^{-1}\| \le c_1$ for any $H \in \pat\grad \phi(x)$ and $x\in B_{\del_1}(x^*)$.
	Moreover, by \cref{lem:grad-bound}, there exist $c_2, \del_2 > 0$ such that $\|\grad \phi(x)\| \le c_2 \dist(x,x^*)$ for any $x\in B_{\del_2}(x^*)$.
	Since $B_{\delta_0}(x^{*})$ is compact,
	by \cref{prop:r-ss}, there exist $c_3, \delta_3 > 0$ such that for any $x\in B_{\delta_0}(x^{*})$ and $\xi \in B_{\delta_3}(0_{x})$, we have
	\begin{align}\notag
		\dist\left(R_{x}(\xi), x\right)
		\le & \dist(R_{x}(\xi),\exp_{x}(\xi)) + \dist(\exp_{x}(\xi),x) \\
		\label{eq:local-1}
		\le & O(\|\xi\|^{1+\nu}) + \|\xi\|
		\leq c_3\|\xi\|.
	\end{align}
	Let $\del_4 = \min\{\del_0,\del_1,\del_2,\frac{\delta_3}{c_1 c_2}\}$. Then let
	\begin{equation} \label{eq:local-2}
		\del = \frac{\del_4}{c_{1}c_2 c_3 + 1}.
	\end{equation}
	Since $\phi$ is continuous and $x^*$ is an isolated local minimizer by \cref{lem:iso-min}, there exists a level set $L = \{x\in \mathcal{M}: \phi(x) \le \phi(x^*) + \epsilon\}$ such that $L \cap B_{\del_4}(x^{*}) \subset B_\del(x^{*})$.
	Let $U = L \cap B_{\del_4}(x^{*})$. For any $x_0\in U$, let $\eta^* = H_0^{-1}\grad \phi(x_0)$. Then, we have
	\begin{equation} \label{eq:local-3}
		\|\eta^*\| \le \|H_0^{-1}\|\|\grad \phi(x_0)\| \le c_1c_2 \dist(x_0,x^*) \le c_1 c_2 \del \le c_1 c_2 \del_4 \le \delta_3.
	\end{equation}
	By \cref{lem:tcg}, $\|\eta_0\| \le \|\eta^*\| \le \delta_3$. Therefore, we have
	$$
		\dist(x_0,x_1)
		= \dist(x_0, R_{x_0}(\eta_0))
		\overset{\cref{eq:local-1}}{\le} c_3 \|\eta_0\|
		\le c_3 \|\eta^*\|
		\overset{\cref{eq:local-3}}{\le} c_{3}c_1 c_2 \del
		\overset{\cref{eq:local-2}}{=}
		\del_4 - \del,
	$$
	which indicates that
	$$
		\dist(x_1, x^*) \le \dist(x_1, x_0) + \dist(x_0, x^*)
		\le (\del_4 - \del) + \del = \del_4.
	$$
	Since the Algorithm~\ref{alg:sstr} is a descent algorithm, we have $x_1 \in L \cap B_{\del_4}(x^{*}) = U$. By induction, we know that $\{x_k\} \subset U$ and thus converges to the only minimizer $x^*$ by the descent property of Algorithm~\ref{alg:sstr}.
\end{proof}

\subsection{Local Convergence Rate} \label{sec:rate}

In this section, we prove the superlinear local convergence rate of our algorithm.
We first state two essential geometric laws on manifolds. 
\re{Recall that our discussion are in some normal neighborhoods.}

\begin{lemma}[Consecutive exponential maps {\cite[Proposition 15.28]{lee2012Smoothmanifolds}}]\label{lem:con-exp}
	For any two vectors $\xi_1,\xi_2 \in T_{x} \mathcal{M}$,
	we have
	$$
		\dist \left( \exp_{x}(\xi_1+\xi_2), \exp_{y}(P_{xy}\xi_2) \right) = O(\|\xi_1\|\|\xi_2\|),
	$$
	where $y = \exp _{x}(\xi_1)$.
	Or equivalently, for any $z$, we have
	$$
		\dist(\exp_{x}(\xi_1 + \xi_2), z) = \dist(\exp_{y}(P_{xy}\xi_2), z) + O(\|\xi_1\|\|\xi_2\|).
	$$
\end{lemma}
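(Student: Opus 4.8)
The plan is to convert the distance estimate into a bound on a single smooth tangent-vector-valued function and then invoke a two-variable Hadamard (Taylor-with-integral-remainder) argument. Working entirely inside one normal neighborhood of $x$, set $y = \exp_x(\xi_1)$ and define
$$
	G(\xi_1,\xi_2) \coloneqq P_{yx}\bigl(\exp_y^{-1}(\exp_x(\xi_1+\xi_2))\bigr) - \xi_2 \ \in\ T_x\M ,
$$
where $\exp_y^{-1}(\exp_x(\xi_1+\xi_2))$ is well defined because $\xi_1$, $\xi_1+\xi_2$, $\exp_x(\xi_1)$ and $\exp_x(\xi_1+\xi_2)$ all stay in a common normal neighborhood, and $P_{yx}$ (parallel transport along $t\mapsto\exp_x(t\xi_1)$) brings the value back to $T_x\M$ so that $G$ has a fixed domain and codomain. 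First I would record that $G$ is jointly smooth in $(\xi_1,\xi_2)$ near $(0_x,0_x)$, using smoothness of $\exp_x$, the joint smoothness of $(q,z)\mapsto\exp_q^{-1}(z)$ on a neighborhood of the diagonal, and smoothness of parallel transport in its endpoints. Then I would reduce the lemma to bounding $\|G\|$: since $\exp_y$ has bounded differential on a small ball around $0_y$ (with $\d\exp_y(0_y)=\operatorname{id}$), the mean value inequality along the segment from $v$ to $w$ gives $\dist(\exp_y(v),\exp_y(w)) = O(\|v-w\|)$ uniformly for small $v,w$; applying this with $v=\exp_y^{-1}(\exp_x(\xi_1+\xi_2))$ and $w=P_{xy}\xi_2$, and using that $P_{xy}$ is an isometry, yields
$$
	\dist\bigl(\exp_x(\xi_1+\xi_2),\exp_y(P_{xy}\xi_2)\bigr) = O(\|v-w\|) = O(\|P_{xy}G(\xi_1,\xi_2)\|) = O(\|G(\xi_1,\xi_2)\|).
$$

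Next I would observe that $G$ vanishes on both coordinate hyperplanes, which is the whole geometric content. If $\xi_1=0_x$ then $y=x$, $P_{yx}=\operatorname{id}$, and $\exp_y^{-1}(\exp_x(\xi_2))=\xi_2$, so $G(0_x,\xi_2)=0$; if $\xi_2=0_x$ then $\exp_x(\xi_1+\xi_2)=\exp_x(\xi_1)=y$, so $\exp_y^{-1}(y)=0_y$ and $G(\xi_1,0_x)=P_{yx}(0_y)=0_x$. I would finish with Hadamard's lemma applied in each slot: picking a basis of $T_x\M$ and writing $G=(G^1,\dots,G^n)$ with each $G^i$ a smooth function of $(\xi_1,\xi_2)\in\R^n\times\R^n$ vanishing whenever $\xi_1=0$ or $\xi_2=0$, one obtains
$$
	G^i(\xi_1,\xi_2) = \sum_{a,b}\xi_1^{a}\,\xi_2^{b}\int_0^1\!\!\int_0^1 \bigl(\partial_{\xi_1^a}\partial_{\xi_2^b}G^i\bigr)(s\xi_1,t\xi_2)\,\d s\,\d t ,
$$
hence $|G^i(\xi_1,\xi_2)|\le C\|\xi_1\|\|\xi_2\|$ on a compact neighborhood of the origin, so $\|G(\xi_1,\xi_2)\|=O(\|\xi_1\|\|\xi_2\|)$. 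Combined with the reduction above this gives the first displayed estimate; the equivalent form is then immediate from the reverse triangle inequality $\bigl|\dist(\exp_x(\xi_1+\xi_2),z)-\dist(\exp_y(P_{xy}\xi_2),z)\bigr|\le \dist(\exp_x(\xi_1+\xi_2),\exp_y(P_{xy}\xi_2))$. Since in the paper's setting $x$ ranges over a compact subset on which the injective radius, the derivative bounds for $\exp$, and the curvature are uniformly controlled, all the $O(\cdot)$ constants can be taken uniform in $x$.

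The main obstacle is not any single deep step but the careful setup: one must keep $\xi_1+\xi_2$, $\exp_x(\xi_1)$ and $\exp_x(\xi_1+\xi_2)$ inside a single normal neighborhood so that $\exp_y^{-1}$ is defined and smooth there, and one must verify that $G$ is genuinely a smooth function of the \emph{pair} $(\xi_1,\xi_2)$ — the joint smoothness of $(q,z)\mapsto\exp_q^{-1}(z)$ near the diagonal being the point that deserves an explicit citation. Once $G$ is in hand, the vanishing of $G$ on $\{\xi_1=0\}\cup\{\xi_2=0\}$ is a one-line check (and is exactly what fails to be available in positive curvature, making the leading correction a genuine second-order product term rather than zero), and the remainder is routine Hadamard-lemma bookkeeping.
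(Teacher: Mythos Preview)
The paper does not prove this lemma; it is stated with a citation to Lee's \textit{Smooth Manifolds} and then used as a black box. Your proof is correct and self-contained: the reduction to a smooth $T_x\M$-valued map $G(\xi_1,\xi_2)$, the one-line checks that $G$ vanishes on both coordinate hyperplanes, and the two-variable Taylor/Hadamard integral remainder together give exactly the $O(\|\xi_1\|\|\xi_2\|)$ bound, while the passage from $\|G\|$ back to Riemannian distance via local bi-Lipschitzness of $\exp_y$ near $0_y$ is routine. One small wording issue: your parenthetical about positive curvature is slightly off---the vanishing of $G$ on $\{\xi_1=0\}\cup\{\xi_2=0\}$ holds in any curvature (it is purely definitional, as your own checks show); curvature only determines whether $G$ vanishes \emph{identically}, i.e., whether the $O(\|\xi_1\|\|\xi_2\|)$ error term is actually zero.
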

\begin{corollary}\label{lem:cosine}
For any two vectors $\xi_1,\xi_2 \in T_{x} \mathcal{M}$,
we have
$$
	\left|\dist(\exp_{x}(\xi_1),\exp _{x}(\xi_2)) - \|\xi_1 - \xi_2\|\right| = O(\|\xi_1\|\|\xi_2\|).
$$
\end{corollary}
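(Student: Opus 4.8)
The plan is to obtain \cref{lem:cosine} as a direct consequence of \cref{lem:con-exp}, using only two elementary facts already recorded in the preliminaries: the exponential map is a radial isometry, i.e. $\dist(\exp_{x}(\xi),x) = \|\xi\|$, and parallel transport preserves norms. First I would set $y = \exp_{x}(\xi_1)$ and apply the equivalent form of \cref{lem:con-exp} to the decomposition $\xi_2 = \xi_1 + (\xi_2 - \xi_1)$, that is, with $\eta_1 = \xi_1$, $\eta_2 = \xi_2 - \xi_1$, and target point $z = y$. This yields
$$
\dist(\exp_{x}(\xi_2), \exp_{x}(\xi_1)) = \dist\!\left(\exp_{y}(P_{xy}(\xi_2 - \xi_1)),\, y\right) + O(\|\xi_1\|\,\|\xi_2 - \xi_1\|).
$$
Since $\exp_{y}$ preserves distance to its base point $y$ and $P_{xy}$ is an isometry, the first term on the right equals $\|P_{xy}(\xi_2 - \xi_1)\| = \|\xi_1 - \xi_2\|$, so
$$
\left|\dist(\exp_{x}(\xi_1), \exp_{x}(\xi_2)) - \|\xi_1 - \xi_2\|\right| = O(\|\xi_1\|\,\|\xi_2 - \xi_1\|).
$$

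The only thing left is cosmetic: the error I just obtained is $O(\|\xi_1\|\,\|\xi_2 - \xi_1\|)$, whereas the claim asks for $O(\|\xi_1\|\,\|\xi_2\|)$. I would close this by exploiting the symmetry of the statement. Running the same argument with the roles of $\xi_1$ and $\xi_2$ swapped (i.e. $y = \exp_{x}(\xi_2)$) gives, in parallel, an error bound $O(\|\xi_2\|\,\|\xi_2 - \xi_1\|)$. Hence the error is bounded by $O\!\left(\min\{\|\xi_1\|,\|\xi_2\|\}\cdot\|\xi_1 - \xi_2\|\right)$, and since $\|\xi_1 - \xi_2\| \le \|\xi_1\| + \|\xi_2\| \le 2\max\{\|\xi_1\|,\|\xi_2\|\}$, this is $O(\|\xi_1\|\,\|\xi_2\|)$, with a constant independent of which of the two estimates is used.

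There is essentially no hard part here; the argument is a two-line reduction. The one point requiring a word of care is making sure everything stays inside a normal neighborhood, so that all the exponential maps, their inverses, and the (unique, minimizing) geodesics between the points $\exp_x(\xi_1)$, $\exp_x(\xi_2)$, $\exp_y(P_{xy}(\xi_2-\xi_1))$ are well defined and so that the hypotheses of \cref{lem:con-exp} apply — but this is exactly the standing convention recalled immediately before the statement, and the uniformity of the $O(\cdot)$ over $x$ in a compact set is inherited directly from \cref{lem:con-exp}.
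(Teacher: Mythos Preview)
Your proposal is correct and follows essentially the same approach as the paper: both apply \cref{lem:con-exp} to the decomposition $\xi_2 = \xi_1 + (\xi_2-\xi_1)$ to get the $O(\|\xi_1\|\,\|\xi_2-\xi_1\|)$ bound, then symmetrize and use $\min\{\|\xi_1\|,\|\xi_2\|\}\cdot\|\xi_1-\xi_2\|\le 2\|\xi_1\|\|\xi_2\|$. The only cosmetic difference is that the paper invokes the triangle inequality explicitly rather than the ``equivalent form'' of \cref{lem:con-exp}, but these are the same computation.
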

\begin{proof}
	First, we note that
	$$
		\|\xi_1 - \xi_2\| = \dist(y, \exp _{y}(P_{xy}(\xi_2 - \xi_1))),
	$$
	where $y = \exp _{x}(\xi_1)$. Then by the triangle inequality, we get
	$$
		\begin{aligned}
			    & \left|\dist(\exp_{x}(\xi_1),\exp _{x}(\xi_2)) - \|\xi_1 - \xi_2\|\right|               \\
			=   & \left| \dist(y, \exp _{x}(\xi_2)) - \dist(y, \exp _{y}(P_{xy}(\xi_2 - \xi_1))) \right| \\
			\le & \dist(\exp _{x}(\xi_2), \exp_{y}(P_{xy}(\xi_2 - \xi_1)))                               \\
			=   & \dist(\exp _{x}(\xi_1 + (\xi_2 - \xi_1)), \exp_{y}(P_{xy}(\xi_2 - \xi_2))).            \\
		\end{aligned}
	$$
	Therefore, by \cref{lem:con-exp}, we get
	$$
		\left|\dist(\exp_{x}(\xi_1),\exp _{x}(\xi_2)) - \|\xi_1 - \xi_2\|\right| = O(\|\xi_1\|\|\xi_2 - \xi_1\|).
	$$
	Symmetrically, we have
	$$
		\left|\dist(\exp_{x}(\xi_1),\exp _{x}(\xi_2)) - \|\xi_1 - \xi_2\|\right| = O(\|\xi_2\|\|\xi_2 - \xi_1\|).
	$$
	Combining the above two equations gives
	$$
		\begin{aligned}
			    & \left|\dist(\exp_{x}(\xi_1),\exp _{x}(\xi_2)) - \|\xi_1 - \xi_2\|\right|                         \\
			\le & O\left(\min \left\{\|\xi_1\|\|\xi_2-\xi_1\|, \|\xi_2\|\|\xi_2-\xi_1\|\right\}\right)             \\
			\le & O\left(\min \left\{\|\xi_1\|\|\xi_2\|+\|\xi_1\|^2, \|\xi_1\|\|\xi_2\|+\|\xi_2\|^2\right\}\right) \\
			=   & O\left(\|\xi_1\|\|\xi_2\| + \min \left\{\|\xi_1\|^2, \|\xi_2\|^2\right\}\right)                  \\
			\le & O\left(\|\xi_1\|\|\xi_2\| + \|\xi_1\|\|\xi_2\|\right)                                            \\
			=   & O\left(\|\xi_1\|\|\xi_2\|\right).
		\end{aligned}
	$$
\end{proof}

This corollary can also be derived from the cosine law on manifolds; see \cite[Lemma 2.4]{daniilidisSelfcontractedCurvesRiemannian2018}.

Proving the superlinear local convergence rate of a trust region method requires demonstrating that the trust region will eventually become inactive.
Once this is established, one can expect the superlinear convergence rate due to the trust region step being precisely Newton's step.
However, previous work on Euclidean semismooth trust region methods either makes the inactivity of the trust region an assumption \citep{sun1997computablegeneralized}, or directly uses Newton's step near a nondegenerate local minimizer \citep{yang2000trustregion}.
In this paper, we prove the inactivity of the trust region without any additional assumptions.

The challenge in showing the inactivity of the trust region stems from the non-twice differentiability of the objective function, which makes it difficult to show that the model error is a second-order infinitesimal of the step-size, i.e., $|m(\eta) - \phi(\eta)| = o(\|\eta\|^2)$.
For twice differentiable objective functions, this condition automatically holds. However, for objective functions with only a semismooth gradient field, it is not as obvious, as the \textit{diameter} of $\pat\grad\phi(x)$ is not an infinitesimal of the step-size. That is, the disparity between the generalized Hessian in $m$ and the one in the Taylor expansion of $\phi$ can be significant in terms of operator norm.
Nonetheless, we can establish a variation of this condition for a semismooth trust region method near a nondegenerate local minimizer with a slight detour.
The key lies in recognizing that, due to its semismoothness (\cref{def:ss}), the diameter of the $\pat\grad\phi(x)$ acting on certain directions can be well controlled.
We now present the lemma on this condition.

\begin{lemma}[Model error condition]\label{lem:mod-err}
	Let $x^*$ be a nondegenerate local minimizer of $\phi$. Let $\{x_k\}\to x^*$ be a sequence generated by Algorithm~\ref{alg:sstr}. Suppose $\grad\phi$ is semismooth at $x^*$ and the retraction $R$ has a $\nu$-\holder differential near $x^{*}$. Then, near $x^*$, we have
	$$
		\left|m_{x_k}(\eta_k) - \phi(\exp _{x_k}(\eta_k))\right| = o(\|\eta_k\|(\|\eta_k\| + \|\grad\phi(x_k)\|)).
	$$
	Or equivalently,
	$$
		\lim_{k \to \infty}
		\frac{\left|m_{x_k}(\eta_k) - \phi(\exp _{x_k}(\eta_k))\right|}{\|\eta_k\|(\|\eta_k\| + \|\grad\phi(x_k)\|)} = 0.
	$$
\end{lemma}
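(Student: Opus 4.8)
The plan is to reduce the claim to an estimate involving only two elements of the generalized covariant derivative $\pat\grad\phi(x_k)$, and then exploit semismoothness at $x^*$.

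First, apply the Taylor expansion \eqref{eq:taylor-4} along the geodesic from $x_k$ to $\exp_{x_k}(\eta_k)$: there is $H_{x_k}'\in\pat\grad\phi(x_k)$ (depending on $\eta_k$) with $\phi(\exp_{x_k}(\eta_k)) = \phi(x_k) + \lang\grad\phi(x_k),\eta_k\rang + \frac{1}{2}\lang H_{x_k}'\eta_k,\eta_k\rang + o(\|\eta_k\|^2)$. Subtracting this from the definition of $m_{x_k}$ in \eqref{eq:model} gives $m_{x_k}(\eta_k) - \phi(\exp_{x_k}(\eta_k)) = \frac{1}{2}\lang(H_k - H_{x_k}')\eta_k,\eta_k\rang + o(\|\eta_k\|^2)$. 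Since $\|\eta_k\|^2 \le \|\eta_k\|(\|\eta_k\| + \|\grad\phi(x_k)\|)$, the $o(\|\eta_k\|^2)$ term is of the required order, so it remains to bound $\lang(H_k - H_{x_k}')\eta_k,\eta_k\rang$.

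The crucial observation is that $H_k$ and $H_{x_k}'$ both lie in $\pat\grad\phi(x_k)$, so semismoothness controls their difference along one distinguished direction. Applying \cref{def:ss} at $x^*$ with $y = x_k$ and $\grad\phi(x^*)=0$ yields, uniformly over $H\in\pat\grad\phi(x_k)$, that $\|\grad\phi(x_k) + H\exp_{x_k}^{-1}(x^*)\| = o(\dist(x_k,x^*))$; by the invertibility bound $\|H^{-1}\|\le\lambda_1^{-1}$ near $x^*$ (\cref{lem:cgcd-bound}) this reads $-H^{-1}\grad\phi(x_k) = \exp_{x_k}^{-1}(x^*) + o(\dist(x_k,x^*))$. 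Writing $\eta_k^* := -H_k^{-1}\grad\phi(x_k)$ for the Newton step (well defined near $x^*$), \cref{lem:grad-bound,lem:cgcd-bound} give that $\|\eta_k^*\|$, $\dist(x_k,x^*)$ and $\|\grad\phi(x_k)\|$ are all comparable, and subtracting the above relation for $H_k$ from that for $H_{x_k}'$ yields $\|(H_k - H_{x_k}')\eta_k^*\| = o(\|\grad\phi(x_k)\|)$. In other words, the diameter of $\pat\grad\phi(x_k)$ acting on the Newton direction is $o(\|\grad\phi(x_k)\|)$ — the "diameter on certain directions" estimate.

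Now decompose $\eta_k = \alpha_k\eta_k^* + \zeta_k$ with $\lang\zeta_k,\eta_k^*\rang = 0$, so $|\alpha_k|\,\|\eta_k^*\| \le \|\eta_k\|$ and $\|\zeta_k\| \le \|\eta_k\|$. Expanding $\lang(H_k - H_{x_k}')\eta_k,\eta_k\rang$ and using self-adjointness of $H_k - H_{x_k}'$ (\cref{def:cgcd}), every term containing $\eta_k^*$ at least once is $o(\|\eta_k\|^2)$ — bound it by the estimate of the previous paragraph together with $|\alpha_k|\,\|\eta_k^*\|,\|\zeta_k\| \le \|\eta_k\|$ and $\|\grad\phi(x_k)\|\le C\|\eta_k^*\|$ — and so of the required order; the only remaining term is $\lang(H_k - H_{x_k}')\zeta_k,\zeta_k\rang$, which by \cref{lem:cgcd-bound} is at most $2\lambda_2\|\zeta_k\|^2$. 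Thus the whole estimate reduces to showing $\|\zeta_k\|^2 = o\bigl(\|\eta_k\|(\|\eta_k\| + \|\grad\phi(x_k)\|)\bigr)$, i.e.\ that the component of the trust-region step transverse to the Newton direction is suitably small. Here the structure of the inner solver enters, via a case split on how Algorithm~\ref{alg:tcg} terminated. If it stopped by the residual test \eqref{eq:stop}, then for large $k$ we have $\|\grad\phi(x_k) + H_k\eta_k\| \le \|\grad\phi(x_k)\|^{1+\theta}$, hence $\|\eta_k - \eta_k^*\| = \|H_k^{-1}(\grad\phi(x_k) + H_k\eta_k)\| \le \lambda_1^{-1}\|\grad\phi(x_k)\|^{1+\theta} = o(\|\grad\phi(x_k)\|)$; this forces $\|\zeta_k\| \le \|\eta_k - \eta_k^*\| = o(\|\grad\phi(x_k)\|)$ with $\|\eta_k\|$ comparable to $\|\grad\phi(x_k)\|$, and the reduced estimate follows. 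If instead Algorithm~\ref{alg:tcg} stopped at the trust-region boundary (so $\|\eta_k\| = \Delta_k$), one uses the monotonicity of $\{\|\xi_j\|\}$ and the descent property along the truncated CG iterates (\cref{lem:tcg}), which place $\eta_k$ on the segment between two consecutive iterates whose norms straddle $\Delta_k$, together with the Cauchy decrease (\cref{lem:cauchy}) and \cref{lem:grad-bound}, to prevent $\eta_k$ from being badly misaligned with $\eta_k^*$. I expect this trust-region-boundary case to be the main obstacle: there the crude operator-norm bound on $H_k - H_{x_k}'$ is useless, so one must combine semismoothness with the precise behavior of the inner solver and the radius-update rule — the delicate "detour" the introduction alludes to.
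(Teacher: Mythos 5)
Your opening coincides with the paper's: the Taylor expansion \eqref{eq:taylor-4} reduces the claim to bounding $\lang (H_k-\hat{H}_k)\eta_k,\eta_k\rang$ for two elements $H_k,\hat{H}_k\in\pat\grad\phi(x_k)$, and semismoothness at $x^*$ together with $\grad\phi(x^*)=0$ gives $\|(H_k-\hat{H}_k)\exp_{x_k}^{-1}(x^*)\|=o(\dist(x_k,x^*))$, which transfers to the Newton direction exactly as you say. The gap is where you suspect it, and it is genuine, not merely technical. Your decomposition $\eta_k=\alpha_k\eta_k^*+\zeta_k$ leaves the term $\lang(H_k-\hat{H}_k)\zeta_k,\zeta_k\rang$, for which the only available bound is the operator-norm bound $2\lambda_2\|\zeta_k\|^2$; the diameter of $\pat\grad\phi(x_k)$ in operator norm does \emph{not} shrink as $x_k\to x^*$ for an SC$^1$ function, so this term cannot be improved in directions transverse to $\exp_{x_k}^{-1}(x^*)$. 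When Algorithm~\ref{alg:tcg} truncates at the boundary it can return, e.g., a multiple of $-\grad\phi(x_k)$ after its first iteration, whose angle with $\eta_k^*=-H_k^{-1}\grad\phi(x_k)$ is bounded away from zero by nothing better than the conditioning of $H_k$; then $\|\zeta_k\|=\Theta(\|\eta_k\|)$, and when $\Delta_k$ is of the same order as $\|\grad\phi(x_k)\|$ (which the radius-update rule does not exclude) the bound $2\lambda_2\|\zeta_k\|^2$ is of exactly the order $\|\eta_k\|(\|\eta_k\|+\|\grad\phi(x_k)\|)$ rather than $o$ of it. The tools you propose for this case (monotonicity of $\|\xi_j\|$, the descent property, Cauchy decrease) constrain the length and the model value of $\eta_k$ but say nothing about its alignment with $\eta_k^*$, so they cannot close the case; nor can you invoke eventual inactivity of the trust region, since \cref{lem:rho} is itself deduced from this lemma.

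The paper avoids the case split entirely by decomposing along a different distinguished direction. With $\zeta_k\coloneqq\exp_{x_k}^{-1}(x^*)$ it writes $(H_k-\hat{H}_k)\eta_k=(H_k-\hat{H}_k)\zeta_k-(H_k-\hat{H}_k)(\zeta_k-\eta_k)$; the first piece is your semismoothness estimate at $x^*$ seen from $x_k$. For the second, the key geometric fact is that $P_{x_kx_{k+1}}(\zeta_k-\eta_k)$ equals $\zeta_{k+1}=\exp_{x_{k+1}}^{-1}(x^*)$ up to an error $o(\|\eta_k\|+\|\zeta_k\|)$ (an identity in the Euclidean case; on a manifold it follows from \cref{lem:con-exp}, \cref{lem:cosine}, and \cref{prop:r-ss}). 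Transporting $H_k,\hat{H}_k$ to $x_{k+1}$ via the upper semicontinuity of $\pat\grad\phi$ (\cref{lem:cgcd}) and applying semismoothness at $x^*$ once more, now from the base point $x_{k+1}$, bounds $(H_k-\hat{H}_k)(\zeta_k-\eta_k)$ by $o(\|\eta_k\|+\|\zeta_k\|)$ with no assumption on how the inner solver terminated. That second application of semismoothness from the \emph{next} iterate is the missing idea.
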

\begin{proof}
	By \eqref{eq:taylor-4} in \cref{lem:taylor}, there exists $\hat{H}_k\in\pat\grad\phi(x_k)$ such that
	$$
		\phi(\exp_{x_k}(\eta_k)) = \phi(x_k) + \lang \grad{\phi(x_k),\eta_k}\rang + \frac{1}{2} \lang \hat{H}_k \eta_k, \eta_k\rang + o(\|\eta_k\|^2)
		.$$
	Also, at each iteration, we arbitrarily choose an $H_k \in \pat\grad\phi(x_k)$ for the model problem \eqref{eq:model}. Therefore, we have
	\begin{equation}\label{eq:mod-err-1}
		m_{x_{k}}(\eta_k) - \phi(\exp_{x_k}(\eta_k)) = \frac{1}{2}\lang(H_k - \hat{H}_k) \eta_k,\eta_k\rang + o(\|\eta_k\|^{2}).
	\end{equation}
	When $\phi$ is twice differentiable, $\pat\grad\phi(x_k)$ only contains one element, then the condition automatically holds.
	For a SC$^{1}$ function $\phi$, we want to control the diameter of $\pat\grad\phi(x_k)$ in some way.
	\re{Suppose $k$ is sufficiently large such that $x^{*}$ is in the normal neighborhood of $x_k$ and subsequent iteration points.}
	Then, we can define $\zeta_{k} = \exp_{x_k}^{-1}(x^*)$. For any $\epsilon > 0$ in Definition~\ref{def:ss}, since $\{ x_k \}$ converges to $x^*$, we know that for a sufficiently large $k$, we have
	\begin{align}\notag
		\left\| (H_{k} - \hat{H}_k)\zeta_k \right\|
		=   \Bigl\| & (\grad\phi(x^*) - P_{x_kx^*}(\grad\phi(x_k) + H_{k}\zeta_k)) -                      \\\notag
		            & \left.(\grad\phi(x^*) - P_{x_{k}x^*}(\grad\phi(x_k) + \hat{H}_{k}\zeta_k)) \right\| \\\notag
		\le \Bigl\| & \grad\phi(x^*) - P_{x_kx^*}(\grad\phi(x_k) + H_{k}\zeta_k) \Bigr\|                  \\\notag
		+           & \left\|\grad\phi(x^*) - P_{x_{k}x^*}(\grad\phi(x_k) + \hat{H}_{k}\zeta_k) \right\|  \\\label{eq:mod-err-0}
		\le 2       & \epsilon \dist(x_k,x^*)^{1+\mu}.
	\end{align}
	By letting $\epsilon$ approach zero and using $\dist(x_k,x^*)=\|\zeta_k\|$, we get
	$$
		\lim_{k \to \infty} \frac{\left\| (H_{k} - \hat{H}_k)\zeta_k \right\|}{\|\zeta_k\|} = 0,
	$$
	which can be equivalently expressed as $\|(H_k-\hat{H}_k)\zeta_k\|=o(\|\zeta_k\|)$.
	We can interpret this as the diameter of $\pat\grad\phi(x_k)$ applied on $\zeta_k$ is controled by $\|\zeta_k\|$.
	Then, by the triangle inequality, we have
	\begin{align}\notag
		\lang(H_k - \hat{H}_k) \eta_k,\eta_k\rang
		\le & \left\| (H_k - \hat{H}_k)\eta_k \right\| \|\eta_k\|                                                                  \\\notag
		\le & \left\| (H_k - \hat{H}_k)(\eta_k-\zeta_k) \right\| \|\eta_k\| + \left\| (H_k - \hat{H}_k)\zeta_k \right\| \|\eta_k\| \\\label{eq:mod-err-2}
		=   & \left\| (H_k - \hat{H}_k)(\eta_k-\zeta_k) \right\| \|\eta_k\| + o(\|\eta_k\|\|\zeta_k\|).
	\end{align}

	When $\{ x_k \}$ converges to $x^*$, the difference $\{ \dist(x_k,x_{k+1}) \}$ also shrinks to zero. Therefore, for any $\epsilon$ in item 3 of \cref{lem:cgcd}, for a sufficiently large $k$, we have
	$$
		P_{x_k x_{k+1}}\pat\grad\phi(x_k)P_{x_{k+1}x_k} \subset \pat\grad\phi(x_{k+1}) + B_{\epsilon}(0).
	$$
	That is, there exist operators $B,\hat{B} \in \mathcal{L}(\tm{x_{k+1}})$ and $A_{k+1}, \hat{A}_{k+1} \in \pat\grad\phi(x_{k+1})$ such that $\|B\|,\|\hat{B}\|\le\epsilon$ and
	$$
		P_{x_k x_{k+1}}H_{k}P_{x_{k+1}x_k} = A_{k+1} + B, \quad
		P_{x_k x_{k+1}}\hat{H}_kP_{x_{k+1}x_k} = \hat{A}_{k+1} + \hat{B}.
	$$
	Then we have
	\begin{align}\notag
		    & \left\| (H_k - \hat{H}_k)(\eta_k-\zeta_k) \right\|                                                         \\\notag
		\le & \left\| (A_{k+1} - \hat{A}_{k+1})P_{x_k x_{k+1}}(\eta_k - \zeta_{k}) \right\|
		+ \left\| (B - \hat{B})P_{x_k x_{k+1}}(\eta_k - \zeta_k) \right\|                                                \\\notag
		\le & \underbrace{\left\| (A_{k+1} - \hat{A}_{k+1})\zeta_{k+1} \right\|}_{G_1}
		+ \underbrace{\left\| (A_{k+1} - \hat{A}_{k+1})(P_{x_k x_{k+1}}(\zeta_k - \eta_k) - \zeta_{k+1}) \right\|}_{G_2} \\\label{eq:mod-err-3}
		    & + \underbrace{2 \epsilon\|\eta_k - \zeta_k\|}_{G_3},
	\end{align}
	where we used two triangle inequalities.
	By the arbitrariness of $\epsilon$, we know $G_3 = o(\|\eta_k - \zeta_k\|) = o(\|\zeta_k\| + \|\eta_k\|)$. Then similar to \eqref{eq:mod-err-0}, we have $G_1 = o(\|\zeta_{k+1}\|)$.
	We are left with $G_2$.
	By Corollary~\ref{lem:cosine}, we have
	\begin{align}\notag
		\left\|  P_{x_k x_{k+1}}(\zeta_k - \eta_k) - \zeta_{k+1}\right\| = & \dist(\exp_{x_{k+1}}(P_{x_k x_{k+1}}(\zeta_k - \eta_k)), \exp_{x_{k+1}}(\zeta_{k+1})) \\\label{eq:mod-err-4}
		                                                                   & + O (\|\zeta_k - \eta_k\|\|\zeta_{k+1}\|).
	\end{align}
	Recall that $\zeta_{k+1} = \exp _{x_{k+1}}^{-1}(x^*)$.
	By \cref{lem:con-exp}, we have
	\begin{align}\notag
		  & \dist(\exp_{x_{k+1}}(P_{x_k x_{k+1}}(\zeta_k - \eta_k)), \exp_{x_{k+1}}(\zeta_{k+1}))     \\\notag
		= & \dist(\exp_{x_{k+1}}(P_{x_k x_{k+1}}(\zeta_k - \eta_k)),x^*)                              \\\label{eq:mod-err-51}
		= & \dist(\exp _{x_k}(\eta_k' + (\zeta_k - \eta_k)),x^*) + O(\|\eta_k'\|\|\zeta_k - \eta_k\|)
	\end{align}
	where $\eta_k' = \exp^{-1}_{x_k}(x_{k+1})$.
	Applying $\zeta_k = \exp _{x_k}^{-1}(x^*)$ and \cref{lem:con-exp} again, we get
	\begin{align}\notag
		  & \dist(\exp _{x_k}(\eta_k' + (\zeta_k - \eta_k)), x^*)                                       \\\notag
		= & \dist(\exp_{x_k}(\zeta_k + (\eta_k' - \eta_k)), x^*)                                        \\\notag
		= & \dist(\exp _{x^*}(P_{x_k x^*}(\eta_k' - \eta_k)), x^*) + O(\|\zeta_k\|\|\eta_k' - \eta_k\|) \\\notag
		= & \|P_{x_kx^*}(\eta_k' - \eta_k)\| + O(\|\zeta_k\|\|\eta_k' - \eta_k\|)                       \\ \label{eq:mod-err-5}
		= & \|\eta_k' - \eta_k\| + O(\|\zeta_k\|\|\eta_k' - \eta_k\|).
	\end{align}

	Finally, by \cref{lem:cosine} and \cref{prop:r-ss}, we have
	\begin{align}\notag
		\|\eta_k' - \eta_k\|
		=   & \dist(x_{k+1}, \exp_{x_k}(\eta_k)) + O(\|\eta_k'\|\|\eta_k\|)                                           \\\notag
		\le & \dist(x_{k+1}, R_{x_k}(\eta_k)) + \dist(R_{x_k}(\eta_k), \exp _{x_k}(\eta_k))+ O(\|\eta_k'\|\|\eta_k\|) \\\label{eq:mod-err-6}
		\le & \ 0 + O(\|\eta_k\|^{1+\nu}) + O(\|\eta_k'\|\|\eta_k\|).
	\end{align}
	Recall that $\{ \dist(x_k,x_{k+1}) \}$ converges to zero. Thus, $\{ \eta_k' \}$ also converges to zero. This fact together with \eqref{eq:mod-err-6} gives
	\begin{equation}\label{eq:mod-err-7}
		\|\eta_k' - \eta_k\| = o(\|\eta_k\|) \quad \text{and} \quad
		\|\eta_k'\| = O(\|\eta_k\|).
	\end{equation}
	Combining \cref{eq:mod-err-4,eq:mod-err-51,eq:mod-err-5} and \cref{eq:mod-err-7} gives
	$$
		\begin{aligned}
			  & \left\| P_{x_k x_{k+1}}(\zeta_k - \eta_k) - \zeta_{k+1} \right\|                                                                 \\
			= & O(\|\zeta_k - \eta_k\|\|\zeta_{k+1}\|) + O(\|\eta_k'\|\|\zeta_k - \eta_k\|) + O(\|\zeta_k\|\|\eta_k' - \eta_k\|) + o(\|\eta_k\|) \\
			= & o(\|\zeta_k - \eta_k\|) + o(\|\zeta_k-\eta_k\|) + o(\|\zeta_k\|) + o(\|\eta_k\|)                                                 \\
			= & o(\|\eta_k\| + \|\zeta_k\|),
		\end{aligned}
	$$
	which further gives $\|\zeta_{k+1}\| = O(\|\eta_k\| + \|\zeta_k\|)$.
	Then for $G_1$ and $G_2$ in \eqref{eq:mod-err-3}, we have
	$$
		\begin{aligned}
			G_1 = o(\|\eta_k\| + \|\zeta_k\|), \quad
			G_2
			= \|A_{k+1} - \hat{A}_{k+1}\|\cdot o(\|\zeta_k\| + \|\eta_k\|).
		\end{aligned}
	$$
	Since $\{ x_k \}$ converges to $x^{*}$, by \cref{lem:cgcd-bound}, $\{ A_{k+1}, \hat{A}_{k+1} \}$ are uniformly bounded. Also, by \cref{lem:grad-bound}, we have
	$$\|\zeta_k\| = \|\exp_{x_k}^{-1}(x^*)\| = \dist(x_k,x^*) = O(\|\grad\phi(x_k)\|).$$ Therefore, combining \cref{eq:mod-err-1,eq:mod-err-2,eq:mod-err-3} gives
	$$
		|m_{x_k}(\eta_k) - \phi(\exp_{x_k} (\eta_k))| \le o(\|\eta_k\|(\|\eta_k\| + \|\grad\phi(x_k)\|)).
	$$
\end{proof}

\begin{remark}
	We remark that the above lemma holds for Euclidean trust region methods as the Euclidean space is also a Riemannian manifold. However, the derivation for the Euclidean case is notably more straightforward, because $G_2$ in \cref{eq:mod-err-3} vanishes as a result of $P_{x_k x_{k+1}}(\zeta_{k} - \eta_k) = \zeta_{k+1}$ in Euclidean spaces.
	Consequently, proof subsequent to \eqref{eq:mod-err-3} vanishes in the Euclidean setting.
	While on a Riemannian manifold with a general retraction, the process is much more involved due to the non-vanishing $G_2$.
\end{remark}

The next lemma shows that the trust region will eventually be inactive. For a $C^{2}$ objective function and retraction, one can easily get $|m(\eta_k) - \phi(R(\eta_k))| = O(\|\grad\phi(x_k)\|\|\eta_k\|^2)$, and then the result follows (see \cite{absil2006convergence}). However, in our setting, we need \cref{prop:r-ss,lem:mod-err} to tackle the non-twice differentiability of the objective function and retraction, respectively.
\begin{lemma}[Inactivity of trust region]\label{lem:rho}
	Let $x^*$ be a nondegenerate local minimizer of $\phi$. Let $\{x_k\}\to x^*$ be a sequence generated by Algorithm~\ref{alg:sstr}. If $\grad\phi$ is semismooth at $x^*$ and the retraction $R$ has a $\nu$-\holder differential near $x^{*}$, we have
	$$
		\lim_{k\to\infty}\rho_k = 1.
	$$
\end{lemma}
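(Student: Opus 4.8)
The plan is to estimate $|\rho_k - 1| = |m_{x_k}(\eta_k) - \phi(R_{x_k}(\eta_k))| \,/\, (m_{x_k}(0) - m_{x_k}(\eta_k))$ (using $m_{x_k}(0) = \phi(x_k)$) by showing its numerator is $o(\|\eta_k\|(\|\eta_k\| + \|\grad\phi(x_k)\|))$ while its denominator is at least a fixed positive multiple of the same quantity. We may assume $\grad\phi(x_k)\neq 0$ throughout, since otherwise $x_k = x^*$ by \cref{lem:iso-min}, $\eta_k = 0$, and the iteration has already converged; and because $x_k\to x^*$ we may work inside a fixed normal neighborhood of $x^*$ on which \cref{lem:cgcd-bound,lem:grad-bound} hold with constants $\lambda_1,\lambda_2>0$. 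I would first record two preliminary facts: by \cref{lem:tcg} and the positive definiteness of $H_k$ near $x^*$ (\cref{lem:cgcd-bound}), $\|\eta_k\| \le \|\eta^*\| = \|H_k^{-1}\grad\phi(x_k)\| \le \lambda_1^{-1}\|\grad\phi(x_k)\|$, so $\|\eta_k\| = O(\|\grad\phi(x_k)\|)$, $\|\eta_k\| + \|\grad\phi(x_k)\| \le (1+\lambda_1^{-1})\|\grad\phi(x_k)\|$, and $\eta_k\neq 0$; and $\|\eta_k\|\le\Delta_k$ by construction of Algorithm~\ref{alg:tcg}.

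For the numerator I would split $|m_{x_k}(\eta_k) - \phi(R_{x_k}(\eta_k))| \le |m_{x_k}(\eta_k) - \phi(\exp_{x_k}(\eta_k))| + |\phi(\exp_{x_k}(\eta_k)) - \phi(R_{x_k}(\eta_k))|$. The first term is precisely \cref{lem:mod-err}, hence $o(\|\eta_k\|(\|\eta_k\| + \|\grad\phi(x_k)\|))$. For the second, integrating $\grad\phi$ along the minimizing geodesic joining the two points bounds it by $(\sup_w\|\grad\phi(w)\|)\,\dist(\exp_{x_k}(\eta_k),R_{x_k}(\eta_k))$, with $w$ ranging over that geodesic; here the vanishing gradient at $x^*$ is essential. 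Since $\grad\phi(x^*)=0$ and $\grad\phi$ is Lipschitz, $\|\grad\phi(w)\| \le L\,\dist(w,x^*)$, and every such $w$ lies within $O(\|\grad\phi(x_k)\|)$ of $x^*$ (combining $\dist(x_k,x^*) = O(\|\grad\phi(x_k)\|)$ from \cref{lem:grad-bound}, $\dist(\exp_{x_k}(\eta_k),x_k)=\|\eta_k\|$, and $\dist(\exp_{x_k}(\eta_k),R_{x_k}(\eta_k)) = O(\|\eta_k\|^{1+\nu})$ from \cref{prop:r-ss}). Thus the second term is $O(\|\grad\phi(x_k)\|)\cdot O(\|\eta_k\|^{1+\nu}) = o(\|\grad\phi(x_k)\|\,\|\eta_k\|)$, because $\|\eta_k\|^{\nu}\to 0$; adding the two pieces makes the numerator $o(\|\eta_k\|(\|\eta_k\| + \|\grad\phi(x_k)\|))$.

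For the denominator, \cref{lem:cauchy} and $\|H_k\|\le\lambda_2$ give $m_{x_k}(0) - m_{x_k}(\eta_k) \ge \frac{1}{2}\|\grad\phi(x_k)\|\min\{\Delta_k,\|\grad\phi(x_k)\|/\lambda_2\}$. If this minimum is $\|\grad\phi(x_k)\|/\lambda_2$, the bound is $\ge \frac{1}{2\lambda_2}\|\grad\phi(x_k)\|^2$, which by the step-size estimate dominates $\|\eta_k\|(\|\eta_k\| + \|\grad\phi(x_k)\|) \le (1+\lambda_1^{-1})\lambda_1^{-1}\|\grad\phi(x_k)\|^2$; if the minimum is $\Delta_k$, then $\|\eta_k\|\le\Delta_k$ yields $\ge \frac{1}{2}\|\grad\phi(x_k)\|\,\|\eta_k\| \ge \frac{1}{2(1+\lambda_1^{-1})}\|\eta_k\|(\|\eta_k\| + \|\grad\phi(x_k)\|)$. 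Either way the denominator exceeds a fixed positive multiple of $\|\eta_k\|(\|\eta_k\| + \|\grad\phi(x_k)\|)>0$, so $|\rho_k - 1|\to 0$.

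I expect the genuine difficulty to be conceptual rather than computational: getting the ``small'' and ``large'' quantities to line up. The model error of \cref{lem:mod-err} is only $o(\|\eta_k\|(\|\eta_k\|+\|\grad\phi(x_k)\|))$ and the retraction error only $O(\|\eta_k\|^{1+\nu})$, both weaker than the $O(\|\grad\phi(x_k)\|\,\|\eta_k\|^2)$ available for $C^2$ data, so the usual comparison fails; the two ideas that rescue it are that $\phi$ has a local Lipschitz constant of order $\|\grad\phi(x_k)\|$ near $x^*$ (because $\grad\phi(x^*)=0$), and that $\|\eta_k\|\le\Delta_k$ salvages the Cauchy bound when $\Delta_k$ is small. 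Once $\rho_k\to 1$ is established, eventual inactivity of the trust region is immediate: for large $k$, $\rho_k>\frac{3}{4}$, so $\{\Delta_k\}$ is nondecreasing while $\|\eta_k\|\le\lambda_1^{-1}\|\grad\phi(x_k)\|\to 0$, whence $\|\eta_k\|<\Delta_k$ eventually.
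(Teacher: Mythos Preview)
Your proposal is correct and follows essentially the same route as the paper: split the numerator into the model error handled by \cref{lem:mod-err} and the retraction error $|\phi(\exp_{x_k}(\eta_k))-\phi(R_{x_k}(\eta_k))|$, bound the latter by a gradient magnitude times $\dist(\exp_{x_k}(\eta_k),R_{x_k}(\eta_k))=O(\|\eta_k\|^{1+\nu})$, and then run the two-case comparison against the Cauchy decrease. The only cosmetic difference is that you bound $\|\grad\phi(w)\|$ by Lipschitz continuity from $x^*$ (exploiting $\grad\phi(x^*)=0$ and \cref{lem:grad-bound}), whereas the paper bounds it from $x_k$ via $\|\grad\phi(w)\|\le L\,\dist(w,x_k)+\|\grad\phi(x_k)\|$; both yield an $O(\|\grad\phi(x_k)\|)$ factor and the remainder of the argument is identical.
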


\begin{proof}
	In this proof, without confusion, we omit the subscript $x_k$ in $m_{x_k}, R_{x_k}$, and $\exp_{x_k}$.
	First by the Taylor equation~\eqref{eq:taylor-1} and \cref{prop:r-ss}, we have
	\begin{align}\notag
		|\phi(\exp(\eta_k)) - \phi(R(\eta_k))|
		=   & \left|\lang P_{\gamma}^{\tau \to 0}\grad \phi(\gamma(\tau)),\gamma'(0)\rang\right|            \\\notag
		\le & \left\|P_{\gamma}^{\tau\to0}\grad{\phi(\gamma(\tau))}\right\|  \|\gamma'(0)\|                 \\\notag
		=   & \left\|P_{\gamma}^{\tau\to0}\grad{\phi(\gamma(\tau))}\right\|  \dist(\exp (\eta_k),R(\eta_k)) \\\label{eq:rho-0}
		\le & \left\|P_{\gamma}^{\tau\to0}\grad{\phi(\gamma(\tau))}\right\|\cdot o(\|\eta_k\|),
	\end{align}
	where $\gamma$ is the geodesic from $\exp(\eta_k)$ to $R(\eta_k)$, and $\tau \in [0,1]$;
	Then we decompose the norm in \eqref{eq:rho-0}:
	\begin{align} \notag
		    & \left\|P_{\gamma}^{\tau\to0}\grad{\phi(\gamma(\tau))}\right\|                                                                                             \\\label{eq:rho-1}
		\le & \left\|P_{\gamma}^{\tau\to0}\!\grad{\phi(\gamma(\tau))}\! - \!P_{x_k\exp(\eta_k)}\grad{\phi(x_k)}\right\| \!+\! \|P_{x_k \exp (\eta_k)}\grad{\phi(x_k)}\| \\
		\label{eq:rho-2}
		\le & L\dist(\gamma(\tau), x_k) + \|\grad{\phi(x_k)}\|                                                                                                          \\
		\label{eq:rho-3}
		\le & L(\dist(\gamma(\tau),\exp(\eta_k)) + \dist(\exp (\eta_k),x_k)) + \|\grad{\phi(x_k)}\|                                                                     \\\notag
		=   & L\tau\dist(R(\eta_k),\exp(\eta_k)) + L\|\eta_k\| + \|\grad{\phi(x_k)}\|                                                                                   \\
		\label{eq:rho-4}
		=   & L\tau\cdot o(\|\eta_k\|) + L\|\eta_k\| + \|\grad{\phi(x_k)}\|,
	\end{align}
	where \eqref{eq:rho-1} and \eqref{eq:rho-3} use the triangle inequality,
	\eqref{eq:rho-2} uses the Lipschitzness of $\grad{\phi}$ and $L$ is the Lipschitz constant as in \cref{def:lip},
	and \eqref{eq:rho-4} is by \cref{prop:r-ss}.
	Combining \eqref{eq:rho-0}, \eqref{eq:rho-4}, and \cref{lem:mod-err} gives
	\begin{align} \notag
		|m(\eta_k) - \phi(R(\eta_k))| \le &
		|m(\eta_k)\!-\!\phi(\exp(\eta_k))| + |\phi(\exp (\eta_k))\!-\!\phi(R(\eta_k))|      \\\label{eq:rho-5}
		=                                 & o(\|\eta_k\|(\|\eta_k\| + \|\grad\phi(x_k)\|)).
	\end{align}

	Since $\{ x_k \}$ converges to $x^{*}$, by \cref{lem:cgcd-bound}, $\{ H_k \}$ and $\{ H_{k}^{-1} \}$ are uniformly bounded; let their operator norm uniform upper bounds be $\beta_1,\beta_2$ respectively.
	Now we denote $\zeta_k\coloneqq \grad\phi(x_k)$ and $\eta_k ^{*} \coloneqq -H_k ^{-1}\zeta_k$. By \cref{lem:tcg}, $\|\eta_k\| \le \|\eta_k ^{*}\| \le \beta_2\|\zeta_k\|$. Putting these back to \eqref{eq:rho-5} gives
	$$
		|m(\eta_k) - \phi(R(\eta_k))| = o(\|\zeta_k\|\cdot \|\eta_k\|).
	$$
	By \cref{lem:cauchy}, putting the above equation back to $\rho_k-1$ gives
	$$
		|\rho_k - 1| \le \frac{2\cdot o(\|\zeta_k\|\cdot \|\eta_k\|)}{\|\zeta_k\| \min\{ \Delta_k, \|\zeta_k\|/ \beta_1 \}}.
	$$
	When the denominator is $\Delta_k$, since $\|\eta_k\|\le \Delta_k$, we have
	$$
		|\rho_k - 1| \le \frac{2\cdot o(\|\zeta_k\|\cdot \Delta_k)}{\|\zeta_k\| \cdot \Delta_k} \to 0, \quad k\to \infty.
	$$
	Otherwise, when the denominator is $\|\zeta_k\|/\beta_1$, since $\|\eta_k\| \le \beta_2\|\zeta_k\|$, we have
	$$
		|\rho_k - 1| \le \frac{2\beta_1 \beta_2\cdot o(\|\zeta_k\|^{2})}{\|\zeta_k\|^{2}} \to 0, \quad k\to \infty.
	$$
	In conclusion, we have
	$$
		\lim_{k \to \infty}|\rho_k - 1| = 0,
	$$
	which gives $\lim_{k \to \infty}\rho_k = 1$.
\end{proof}
To the best of our knowledge, \cref{lem:rho} is the first result on the eventual inactivity of the trust region of trust region methods for SC$^1$ problems, even for the Euclidean case.
We are now ready to prove the algorithm's superlinear local convergence rate.

\begin{theorem}[Local convergence rate] \label{thm:rate}
	Let $x^*$ be a nondegenerate local minimizer of $\phi$. Let $\{x_k\}\to x^*$ be a sequence generated by Algorithm~\ref{alg:sstr}.
	If the retraction is $\nu$-order H\"older differentiable in a neighborhood of $x^*$,
	$\grad {\phi(x)}$ is $\mu$-order semismooth at $x^*$,
	Algorithm \ref{alg:tcg} uses \eqref{eq:stop} as the stopping criterion,
	then there exist $c,K>0$ such that for any $k \ge K$,
	$$
		\dist(x_{k+1}, x^*) \le c \dist(x_k,x^*)^{1+\min\{\theta,\nu,\mu\}}.
	$$

	Specifically, if $R\in C^{1,1}$, $\grad \phi(x)$ is strongly semismooth at $x^*$, and $\theta = 1$ in \eqref{eq:stop}, we have
	$$
		\dist(x_{k+1}, x^*) \le c \dist(x_k,x^*)^2.
	$$
\end{theorem}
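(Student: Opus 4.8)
The plan is to combine \cref{lem:rho} with the local positive definiteness of $\pat\grad\phi$ near $x^*$ to show that the trust region is eventually inactive, so that the accepted step is an inexact semismooth Newton step, and then to run the standard local-rate estimate, transported from the tangent space back to the manifold by the geometric lemmas of this section. \emph{Step 1 (inactivity and an inexact-Newton inequality).} Since $\{x_k\}\to x^*$, we have $\grad\phi(x_k)\to 0$, and by \cref{lem:cgcd-bound} the operators $H_k$ are positive definite with $\|H_k^{-1}\|\le\lambda_1^{-1}$ for $k$ large; hence the model minimizer $\eta_k^{*}=-H_k^{-1}\grad\phi(x_k)$ satisfies $\|\eta_k^{*}\|\le\lambda_1^{-1}\|\grad\phi(x_k)\|\to 0$, and by \cref{lem:tcg} the returned step obeys $\|\eta_k\|\le\|\eta_k^{*}\|\to 0$. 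By \cref{lem:rho}, $\rho_k\to 1$, so for $k$ large $\rho_k>1/4$, the radius is never contracted, and $\Delta_k$ is bounded below by some $\Delta_{\min}>0$; combined with $\|\eta_k^{*}\|\to 0$ this gives $\|\eta_k\|<\Delta_k$, and since $H_k$ is positive definite, neither truncation of Algorithm~\ref{alg:tcg} ever fires, so the method terminates through \eqref{eq:stop} (in the exact case $\eta_k=\eta_k^{*}$ the residual is $0$). Writing $r_k=\grad\phi(x_k)+H_k\eta_k$, criterion \eqref{eq:stop} then yields $\|r_k\|\le\|\grad\phi(x_k)\|^{1+\theta}$ as soon as $\|\grad\phi(x_k)\|^{\theta}\le\epsilon$; moreover $\rho_k>\rho'$ eventually, so $x_{k+1}=R_{x_k}(\eta_k)$.

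\emph{Step 2 (the Newton increment versus $\zeta_k=\exp_{x_k}^{-1}(x^*)$).} Note $\|\zeta_k\|=\dist(x_k,x^*)$. Applying the $\mu$-semismoothness of $\grad\phi$ at $x^*$ (\cref{def:ss}) with $y=x_k$ and the chosen $H_k$, using $\grad\phi(x^*)=0$ and the isometry of parallel transport, gives $\|\grad\phi(x_k)+H_k\zeta_k\|=o(\|\zeta_k\|^{1+\mu})$ (and $O(\|\zeta_k\|^2)$ when $\grad\phi$ is strongly semismooth). Subtracting this from $r_k=\grad\phi(x_k)+H_k\eta_k$ yields $\|H_k(\eta_k-\zeta_k)\|=O(\|\grad\phi(x_k)\|^{1+\theta})+o(\|\zeta_k\|^{1+\mu})$, and since $\|\grad\phi(x_k)\|=O(\|\zeta_k\|)$ by \cref{lem:grad-bound} and $\|H_k^{-1}\|\le\lambda_1^{-1}$, this gives
$$
\|\eta_k-\zeta_k\|=O\!\left(\|\zeta_k\|^{1+\min\{\theta,\mu\}}\right);
$$
in particular $\|\eta_k\|=O(\|\zeta_k\|)$, as already used above.

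\emph{Step 3 (transfer to the manifold and conclusion).} By the triangle inequality together with \cref{prop:r-ss} (retraction error) and \cref{lem:cosine} (exponential map at $x_k$), using $\exp_{x_k}(\zeta_k)=x^*$,
$$
\dist(x_{k+1},x^*)\le\|\eta_k-\zeta_k\|+O\!\left(\|\eta_k\|^{1+\nu}\right)+O\!\left(\|\eta_k\|\,\|\zeta_k\|\right).
$$
Substituting $\|\eta_k\|=O(\|\zeta_k\|)$ and the Step~2 bound, and recalling $\|\zeta_k\|=\dist(x_k,x^*)$, we obtain $\dist(x_{k+1},x^*)=O(\dist(x_k,x^*)^{1+\min\{\theta,\nu,\mu\}})$, i.e.\ the claimed inequality for suitable $c,K$. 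The quadratic refinement is the special case $R\in C^{1,1}$ ($\nu=1$), strong semismoothness (playing the role of $\mu=1$), and $\theta=1$, in which all three error terms are $O(\dist(x_k,x^*)^2)$.

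\emph{Main obstacle.} The crux is Step~1: showing that the trust region is eventually inactive and that Algorithm~\ref{alg:tcg} necessarily exits through \eqref{eq:stop} rather than a curvature- or boundary-truncation. This is exactly where \cref{lem:rho} --- and, behind it, the delicate model-error estimate \cref{lem:mod-err} --- together with the local positive definiteness of $\pat\grad\phi$ near $x^*$ do the work. Once the inexact-Newton bound $\|\grad\phi(x_k)+H_k\eta_k\|\le\|\grad\phi(x_k)\|^{1+\theta}$ is available, Steps~2--3 are the standard semismooth Newton argument; the one manifold-specific subtlety is the $O(\|\eta_k\|\,\|\zeta_k\|)$ curvature term in Step~3, which is harmless because the exponents of interest satisfy $\min\{\theta,\nu,\mu\}\le 1$.
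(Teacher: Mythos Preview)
Your proposal is correct and follows essentially the same approach as the paper's proof: both use \cref{lem:rho} together with \cref{lem:cgcd-bound} to show eventual inactivity of the trust region and termination of TCG through \eqref{eq:stop}, then combine the $\mu$-semismoothness bound on $\grad\phi(x_k)+H_k\zeta_k$, the stopping-criterion bound on the residual, and the geometric transfer via \cref{prop:r-ss} and \cref{lem:cosine}. The only cosmetic difference is that you bound $\|\eta_k-\zeta_k\|$ directly by subtracting the semismoothness estimate from the residual equation, whereas the paper routes through the exact Newton step $\eta_k^{*}=-H_k^{-1}\grad\phi(x_k)$ via $\|\eta_k-\zeta_k\|\le\|\eta_k-\eta_k^{*}\|+\|\eta_k^{*}-\zeta_k\|$; these are the same computation up to when $H_k^{-1}$ is applied.
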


\begin{proof}
	First, by \cref{prop:r-ss}, there exists $K_1,c_1 > 0$ such that for any $k\ge K_1$, we have
	\begin{align} \label{eq:rate-1}
		\dist(x_{k+1}, x^*)
		 & = \dist(x^*, R_{x_k}(\eta_k))\notag                                                  \\
		 & \le \dist(x^*,\exp_{x_k}(\eta_k)) + \dist(\exp_{x_k}(\eta_k), R_{x_k}(\eta_k))\notag \\
		 & \le \dist(x^*,\exp_{x_k}(\eta_k)) + c_1\|\eta_k\|^{1+\nu}.
	\end{align}

	\re{Suppose $K_1$ is sufficiently large such that $x^{*}$ is in the normal neighborhood of $x_k$ and subsequent iteration points.}
	Then, we can define $\zeta_k \coloneqq \exp_{x_k}^{-1}(x^*)$. By Corollary~\ref{lem:cosine}, there exists $c_2 > 0$ such that
	\begin{equation}\label{eq:rate-2}
		\dist(x^*,\exp_{x_k}(\eta_k))
		= \dist(\exp_{x_k}(\zeta_k),\exp_{x_k}(\eta_k))
		\le \|\eta_k - \zeta_k\| + c_2\|\zeta_k\|\|\eta_k\|.
	\end{equation}

	Let $\eta_{k}^* \coloneqq -H_k^{-1}\grad \phi(x_k)$. Then we have
	\begin{align}\label{eq:rate-3}
		\|\eta_k - \zeta_k\| & = \|\eta_k - \eta_k^* + \eta_k^* - \exp_{x_k}^{-1}(x^*)\|\notag                 \\
		                     & \le \|\eta_k - \eta_k^*\| + \|H_k^{-1}\grad \phi(x_k) + \exp_{x_k}^{-1}(x^*)\|.
	\end{align}

	By \cref{lem:cgcd-bound}, there exists $K_2 > 0$ and $\beta_2 > 0$ such that $\{H_k^{-1}\}_{k\ge K_2}$ is uniformly bounded by $\beta_2$. Also, by the semismoothness of $\grad \phi(x)$ at $x^*$, there exists $K_3> 0$ such that for any $k\ge K_3$,
	\begin{align}\label{eq:rate-4}
		\|H_k^{-1}\grad \phi(x_k) + \exp_{x_k}^{-1}(x^*)\|
		 & \le \|H_k^{-1}\|\|\grad \phi(x_k) + H_k\exp_{x_k}^{-1}(x^*)\|\notag \\
		 & \le \beta_2 \dist(x_k,x^*)^{1+\mu}.
	\end{align}

	At last, we need to bound $\|\eta_k -\eta_k^*\|$. By \cref{lem:rho}, there exists $K_4 > 0$ such that for any $k\ge K_4$, the trust region is inactive. Then the trust region radius will reach the radius cap set in Algorithm~\ref{alg:sstr}, i.e., $\Delta_k = \bar{\Delta} > 0$. On the other hand, by \cref{lem:tcg} we have
	$$
		\lim_{k \to \infty}\|\eta_k\| \le \lim_{k \to \infty}\|\eta_k^*\| \le \lim_{k \to \infty}\|H_{k}^{-1}\|\|\grad\phi(x_k)\| \le \beta_2\lim_{k \to \infty}\|\grad\phi(x_k)\| = 0.
	$$
	Therefore, the second truncation condition in Algorithm \ref{alg:tcg} (line 8) shall not be met.
	Moreover, by \cref{lem:cgcd-bound}, the elements in $\pat\grad\phi(x)$ when $x$ is near $x^{*}$ are positive definite (say $k\ge K_4$). Therefore, the first truncation in Algorithm~\ref{alg:tcg} (line 3) shall not be met.
	All in all, for $k\ge K_4$, Algorithm \ref{alg:tcg} terminates using \eqref{eq:stop}, making the final residual satisfy $r_j = r_0 + H_k\eta_k = \grad \phi(x_k) + H_k\eta_k$. Therefore,
	$$
		\|\eta_k - \eta_k^*\| = \|H_k^{-1}(H_k\eta_k + \grad \phi(x_k))\| = \|H_k^{-1}r_j\| \overset{\cref{eq:stop}}{\le} \beta_2 \|r_0\|^{1+\theta},
	$$
	where $r_0 = \grad\phi(x_k)$ and $\theta > 0$ are set in Algorithm~\ref{alg:tcg}. Again, by \cref{lem:cgcd-bound}, let $\beta_1$ be the operator norm uniform upper bound of $\{ H_k \}$ when $k\ge K_4$. By \cref{lem:grad-bound}, we have
	\begin{equation}\label{eq:rate-5}
		\|\eta_k - \eta_k^*\| \le \beta_2 \|\grad \phi(x_k)\|^{1+\theta} \le \beta_2 \beta_1^{1+\theta}\dist(x^*,x_k)^{1+\theta}.
	\end{equation}

	Let $K = \max\{K_1,K_2,K_3,K_4\}$. Combining \cref{eq:rate-1,eq:rate-2,eq:rate-3,eq:rate-4,eq:rate-5} gives
	\begin{align}\notag
		\dist(x_{k+1},x^*) \le & c_1\|\eta_k\|^{1+\nu} + c_2\|\eta_k\|\dist(x_k,x^*)                         \\\label{eq:rate-6}
		                       & + \beta_2(1 + \beta_1^{1+\theta})\dist(x_k,x^*)^{\min{\{1+\theta,1+\mu\}}}.
	\end{align}
	Similar to \eqref{eq:rate-5}, we also have $\|\eta_k\| \le \|\eta_k^*\| \le \beta_2 \|\grad\phi(x_k)\| \le \beta_2 \beta_1 \dist(x_k,x^*)$.
	Plugging this back into \eqref{eq:rate-6} gives
	$$
		\dist(x_{k+1},x^*) \le \beta_2\left( c_1 \beta_{2}^{\nu} \beta_1^{1+\nu} + c_2\beta_1 + (1 + \beta_1^{1+\theta}) \right) \dist(x_k,x^*)^{1+\min{\{\theta,\nu,\mu\}}}.
	$$

	Finally, letting $c = \beta_2\left( c_1 \beta_{2}^{\nu} \beta_1^{1+\nu} + c_2\beta_1 + (1 + \beta_1^{1+\theta}) \right)$, we get the result.
\end{proof}

\section{Application: Solving Augmented Lagrangian Method Subproblem} \label{sec:app}
Our primary motivation for proposing a semismooth Riemannian trust region method is to develop a new technique tailored for solving the subproblem of an augmented Lagrangian method (ALM) on manifolds.
In this section, we briefly review ALM on manifolds and formulate its subproblem in the form of \eqref{eq:problem}.
Then, in \cref{sec:exp}, we assess the performance of Algorithm~\ref{alg:sstr} as the subproblem solver for ALM on manifolds.

We consider the following optimization problem
$$
\min_{x\in\M} f(x) + g(h_1(x)) \quad\text{s.t.}~ h_2(x) \le 0,
$$
where $\M$ is a Riemannian manifold, $f,h_1,h_2$ are continuously differentiable on $\M$ with Lipschitz continuous differentials, and $g$ is convex and lower semicontinuous.
Recently, \cite{kangkang2022inexact} and \cite{zhou2022semismooth} proposed two manifold inexact augmented Lagrangian methods to tackle this problem with convergence guarantee.
By introducing two new variables, we get the reformulation:
\begin{equation}\notag
	\begin{aligned}
		\min_{x,y,z} \quad & f(x) + g(y) + \delta_{\R_-^n}(z)   \\
		\text{s.t.}\quad   & x\in \M,\ y = h_1(x),\ z = h_2(x),
	\end{aligned}
\end{equation}
where $\dr(z)$ is the indicator function which equals 0 if $z\le 0$ and $+\infty$ otherwise, replacing the inequality constraint.
Then the augmented Lagrangian function of the reformulation is
\begin{align*}\notag
	L_\sigma(x,y,z,\lambda,\gamma)
	= & f(x) + g(y) + \dr(z) + \lambda^T(h_1(x) - y) + \gamma^T(h_2(x) - z)                                                         \\
	  & + \frac{\sigma}{2}\|h_1(x) - y\|_2^2 + \frac{\sigma}{2}\|h_2(x) - z\|_2^2\notag                                             \\
	= & f(x) + g(y) + \dr(z) + \frac{\sigma}{2}\left\|h_1(x) - y + \frac{\lambda}{\sigma}\right\|_2^2                               \\
	  & + \frac{\sigma}{2}\left\|h_2(x) - z + \frac{\gamma}{\sigma}\right\|_2^2 - \frac{\|\lambda\|_2^2 + \|\gamma\|^2_2}{2\sigma}.
\end{align*}

The idea of ALM is to solve the minimization problem of $L_\sigma$ with respect to $x$, $y$, and $z$ respectively in each step. We then use the Moreau envelope to further simplify the subproblem. Using partial minimization, we have
\begin{align*}
	\min_yL_\sigma & = \min_y g(y) + \frac{\sigma}{2}\left\|h_1(x)+ \frac{\lambda}{\sigma} - y\right\|_2^2 = M_{g}^{\sigma}\left( h_1(x) + \frac{\lambda}{\sigma} \right),   \\
	\min_zL_\sigma & = \min_z \dr(z) + \frac{\sigma}{2}\left\|h_2(x)+ \frac{\gamma}{\sigma} - z\right\|_2^2 = M_{\dr}^{\sigma}\left( h_2(x) + \frac{\gamma}{\sigma} \right),
\end{align*}
where $M_{g}^{\sigma}$ and $M_{\dr}^{\sigma}$ are Moreau envelopes defined as follows:
$$
	M^{\sigma}_\psi(u) \coloneqq \min_x \left\{\psi(x) + \frac{\sigma}{2}\|u - x\|_2^2\right\}.
$$
Once the optimal $x$ is given, these two subproblems (the $y$-subproblem and the $z$-subproblem) can be directly solved using proximal operators, and their solutions are given by
\begin{align*}
	y & = \argmin_yL_\sigma = \prox_{g/\sigma}\left( h_1(x) + \frac{\lambda}{\sigma}\right),                                                               \\
	z & = \argmin_zL_\sigma = \prox_{\dr/\sigma}\left( h_2(x) + \frac{\gamma}{\sigma}\right) = \proj_{\R_-^n}\left( h_2(x) + \frac{\gamma}{\sigma}\right).
\end{align*}
Then the problem $\min_{x,y,z}L_\sigma$ is equivalent to solving the following problem:
\begin{align}\label{eq:x-sub}
	x =
	\underset{x\in\M}{\argmin}\left\{ \varphi(x,\sigma,\lambda,\gamma)
	\coloneqq
	f(x) + M_g^\sigma \left( h_1(x) + \frac{\lambda}{\sigma} \right) + M_{\dr}^\sigma \left( h_2(x) + \frac{\lambda}{\sigma}\right)\right\}.
\end{align}
Since Moreau envelopes $M_g^\sigma$ and $M_{\dr}^\sigma$ have Lipschitz continuous gradient but may not necessarily be twice differentiable,  the $x$-subproblem \eqref{eq:x-sub} is captured by \eqref{eq:problem}, and is the most difficult part of the problem. Hence, we usually refer to problem \eqref{eq:x-sub} by the subproblem of ALM.
For reference, we present the ALM using our trust region method as a subproblem solver in Algorithm~\ref{alg:alm}.
We also remark that both $M_g^\sigma$ composed with $h_1$ and $M_{\dr}^\sigma$ composed with $h_2$ have semismooth gradient field on manifiold $\M$.

\begin{remark}
	For objective functions of the form $\phi = \sum_{k=1}^{K} \phi_{k}$, we can use $\mathcal{K}\coloneqq\bigcup_{k=1}^{K}\pat\grad \phi_{k}$ in place of $\pat\grad \phi$ as the generalized Hessian, if the former is easier to compute. Notably, if $\grad \phi_k$ is semismooth w.r.t. $\pat\grad\phi_k$, then $\grad \phi$ is semismooth w.r.t. $\mathcal{K}\supset \pat\grad\phi$ \citep[Proposition 4.4]{zhou2022semismooth}.
	For the results in \cref{sec:local,sec:rate} to hold using $\mathcal{K}$, we need to assume all elements in $\mathcal{K}(x^{*})$ are positive definite for a nondegenerate local minimizer $x^{*}$, then the proof falls through similarly.
\end{remark}

\cite{kangkang2022inexact}  solves the subproblem using the Riemannian gradient descent method, while \cite{zhou2022semismooth}  employs a semismooth Newton method that falls back to the gradient descent method when encountering negative curvatures.  It is worth noting that neither of them incorporates a full second-order method to solve the subproblem in the ALM, whereas our trust region method does.

\begin{algorithm}[!th]
	\caption{Manifold Inexact Augmented Lagrangian Framework}\label{alg:alm}
	\Input{$x_0 \in \M, \lambda_0 \in \R^m, \gamma_0\in\R^n_+, \sigma_0 > 0, \alpha, \tau\in(0,1), \kappa > 1, \{\epsilon_k\}\subset{\R_+}$ converging to $0$. Initialize $y_0 = \prox_{g/\sigma_0}(h_1(x_0) + \lambda_0/\sigma_0)$,
	$z_0 = \proj_{\R_-^n}(h_2(x_0) + \gamma_0/\sigma_0)$.
		Choose a constant $\bar{L} \ge \max\{L_{\sigma_0}(x_0,y_0,z_0,\lambda_0,\gamma_0), f(\bar{x}) + g(h_1(\bar{x}))\}$, where $\bar{x}\in\M$ is any feasible point.}\;
		\For {$k = 1,2,\dots$}{
		Inexactly solve the $x$-subproblem \eqref{eq:x-sub}: find $x_k \in\M$ such that $\|\grad\varphi_k(x_k)\|\le \epsilon_k$ and $\varphi_k(x_k)\le \bar{L}$, where $\varphi_k(\cdot) \coloneqq \varphi(\cdot,\sigma_k,\lambda_k,\sigma_k)$.
		Then update $y_k = \prox_{g/\sigma_k}(h_1(x_k) + \lambda_k/\sigma_k)$ and $z_k = \proj_{\R_-^n}(h_2(x_k) + \gamma_k/\sigma_k)$.\;
		Update $\lambda_{k+1} = \lambda_k + \sigma_k(h_1(x_k) - y_k)$ and $\gamma_{k+1} = \gamma_k + \sigma_k(h_2(x_k) - z_k)$.\;
		Update $\delta_k = \max\{\|h_1(x_k)-y_k\|_2,\|h_2(x_k)-z_k\|_2\}$.\;
		\uIf {$\delta_k \le \tau \delta_{k-1}$} {
			$\sigma_{k+1} = \sigma_k$.\;
		} \Else {
	$\sigma_{k+1} = \max\{\kappa\sigma_k, \|\lambda_{k+1}\|_2^{1 + \alpha}, \|\gamma_{k+1}\|_2^{1 + \alpha}\}$.\;
	}
	}
\end{algorithm}

\section{Numerical Experiments} \label{sec:exp}

In this paper, we conducted a performance evaluation of our algorithm on two distinct problem domains: compressed modes (CM) \citep{ozolicnvs2013compressed} and sparse principal component analysis (SPCA) \citep{zou2006sparse}. To assess the effectiveness of our approach, we compared our results against several state-of-the-art algorithms: SOC \citep{lai2014splitting}, ManPG \cite[ManPG-Ada (Algorithm 2)]{chen2020proximal}, ALMSSN \citep{zhou2022semismooth}, accelerated ManPG (AManPG) \citep{huang2022extension}, accelerated Riemannian proximal gradient (ARPG) \citep{huang2021riemannian}, and manifold inexact augmented Lagrangian method (MIALM) \citep{kangkang2022inexact}.

Our algorithm was implemented using the manopt package\footnote{\href{https://www.manopt.org/index.html}{\texttt{https://www.manopt.org/index.html}}} in MATLAB and executed on a standard PC equipped with an AMD Ryzen 7 5800H with Radeon Graphics CPU and 16GB RAM. In our experimental results, we use ``ALMSRTR'' to represent our algorithm. To ensure statistical significance, we conducted 20 independent instances for each parameter setting and reported the average experimental results.

\subsection{Compressed Modes in Physics}
The compressed modes (CM) problem is a mathematical physics problem that focuses on obtaining sparse solutions for a specific class of problems, such as the Schrödinger equation in quantum mechanics. To induce sparsity, the wave function undergoes $L_1$ regularization, leading to compact support solutions known as compressed modes.

Following the framework proposed by \citep{chen2020proximal}, the CM problem can be formulated as
\[
	\min_{P \in \mathrm{St}(n, r)} \left\{ \mathrm{tr}\left(P^{T} H P\right) + \mu \|P\|_{1} \right\},
\]
where $\mathrm{St}(n,r) := \{P \in \mathbb{R}^{n \times r} : P^{T}P = I_r \}$ is the Stiefel manifold and $\mu$ is a regularization parameter.
For additional details, readers may refer to \citep{ozolicnvs2013compressed}. Our experimental setup was consistent with that of \citep{zhou2022semismooth}.
In our experiments, we employ the retraction described in Algorithm \ref{alg:sstr}, which utilizes the QR decomposition method introduced in \cite[Example 4.1.3]{absilOptimizationAlgorithmsMatrix2008}.
Additionally, to maintain consistency, we employ QR decomposition in ALMSSN in the subsequent experiments.

\paragraph{Implementation details.} It is worth noting that SOC considers an equivalent problem, which can be expressed as follows:
\begin{equation*}
	\begin{aligned}
		\min_{X, Q, P \in \mathbb{R}^{n \times r}} & \operatorname{tr}\left(P^{T} H P\right) + \mu\|X\|_1 \\
		\text{s.t.} \quad                          & X=P, Q=P, Q^{T} Q=I_r .
	\end{aligned}
\end{equation*}
The Lagrangian of the equivalent problem is given by:
\[
	L_S(Q, P, X, \Gamma, \Lambda) = \operatorname{tr}\left(P^{T} H P\right) + \mu\|X\|_1 + \operatorname{tr}\left(\Gamma^{T}(Q-P)\right) + \operatorname{tr}\left(\Lambda^{T}(X-P)\right),
\]
where $P, X \in \mathbb{R}^{n\times r}$ and $Q \in \mathrm{St}(n, r)$. Therefore, the termination conditions for SOC are as follows:
\begin{equation*}
	\begin{aligned}
		 & \frac{\|Q-P\|_{\infty}}{\max \left\{\|Q\|_F,\|P\|_F\right\}+1} + \frac{\|X-P\|_{\infty}}{\max \left\{\|X\|_F,\|P\|_F\right\}+1} \leq 5 \times 10^{-7},                                                             \\
		 & \frac{\left\|\operatorname{grad}_Q L_S\right\|_{\infty}}{\|Q\|_F+1} + \frac{\left\|\nabla_P L_S\right\|_{\infty}}{\|P\|_F+1} + \frac{\min _{G \in \partial_X L_S}\|G\|_{\infty}}{\|X\|_F+1} \leq 5 \times 10^{-5}.
	\end{aligned}
\end{equation*}
Similar to MIALM and ALMSSN, our algorithm translates the problem to
\begin{equation*}
	\begin{gathered}
		\min _{P, Q \in \mathbb{R}^{n \times r}}\left\{ \operatorname{tr}\left(P^{T} H P\right) + \mu\|Q\|_1 \right\}, \\
		\text{s.t. } P=Q, P \in \operatorname{St}(n, r) .
	\end{gathered}
\end{equation*}
The Lagrangian is given by $L_N(P, Q, \Lambda) = \operatorname{tr}\left(P^{T} H P\right) + \mu\|Q\|_1 + \operatorname{tr}\left(\Lambda^{T}(P-Q)\right)$, where $Q \in \mathbb{R}^{n \times r}$ and $P \in \operatorname{St}(n, r)$.
We terminate these three algorithms when the following conditions hold:
\begin{equation}
	\label{tc for us}
	\begin{aligned}
		\frac{\|P-Q\|_{\infty}}{\max \left\{\|P\|_F,\|Q\|_F\right\}+1} \leq 5 \times 10^{-7}, \\
		\frac{\left\|\operatorname{grad}_P L_N\right\|_{\infty}}{\|P\|_F+1} + \frac{\min _{G \in \partial_Q L_N}\|G\|_{\infty}}{\|Q\|_F+1} \leq 5 \times 10^{-5}.
	\end{aligned}
\end{equation}
Following the same termination criterion as \citep{zhou2022semismooth}, we terminate ManPG, AManPG, and ARPG when $t^{-1}\left\|V_*\right\|_{\infty} /\left(\|P\|_F+1\right) \leq 5 \times 10^{-5}$, where
\[
	V_* = \underset{V \in T_P \operatorname{St}(n, r)}{\arg \min}\left\{\left\langle \operatorname{grad}_P \operatorname{tr}\left(P^{T} H P\right), V\right\rangle + \frac{1}{2 t}\|V\|_F^2 + \|P+V\|_1\right\}.
\]
We adopt the notation $\mathbf{P}_P V$ to represent the projection of a vector $V \in \mathbb{R}^{n \times r}$ onto $T_P \operatorname{St}(n, r)$. Since $-V_* / t \in 2 \mathbf{P}_P(HP) + \mu \mathbf{P}_P\left(\partial\left\|P+V_*\right\|_1\right)$, this termination criterion serves as an approximation of the first-order optimality condition for the CM problem, namely, the condition $0 \in 2 \mathbf{P}_P(HP) + \mu \mathbf{P}_P\left(\partial\left\|P\right\|_1\right)$.

The implementations of ManPG, AManPG, ARPG, and SOC in our study are consistent with the approaches employed in \citep{zhou2022semismooth}. We directly utilize the same codes and parameters as described in that particular work. Similarly, for MIALM, we adopt the codes and parameters provided in \citep{kangkang2022inexact}, while the codes and parameters of ALMSSN are based on the specifications presented in \citep{zhou2022semismooth}. In our experimentation, we establish termination criteria based on predefined conditions, and additionally, we terminate all six methods if the number of iterations exceeds 30,000.

In Algorithm \ref{alg:alm}, the sequence ${\epsilon_k}$ is set to $\epsilon_k = 0.8^k$, and the initial value of $\sigma_0$ is set to $1$. The dual variable $\lambda_0$ is initialized as a null matrix, while the primal variable $x_0$ is randomly generated using the same procedure as the other algorithms. The parameter $\tau$ is set to $0.99$, and the initial value of the parameter $\kappa$ is chosen as $1.25$. The initial maximum number of iterations in Algorithm~\ref{alg:sstr} is set to $40$ for $n \geq 500$ and $60$ for $n < 500$, where $n$ represents the data dimension. Both the maximum number of iterations of Algorithm~\ref{alg:sstr} and the value of $\kappa$ are adaptively adjusted during the iteration based on the accuracy of the current iteration. The maximum number of iterations for Algorithm~\ref{alg:tcg} used to solve the model problem in Algorithm~\ref{alg:sstr} is set to $300$. In Algorithm~\ref{alg:sstr}, we set $\bar{\Delta} = 10$, $\Delta_0 = 0.01$, and $\rho' = 0.1$. During our experiments, we noticed that ManPG exhibited notably slow performance when $n=500$, which led us to implement a 120-second time limit for the algorithm's termination.

We present the results of our experiments in Table \ref{tab:cm} and visualize them in Figure \ref{fig:cm}. The findings in Table \ref{tab:cm} indicate that all algorithms achieve similar objective function values across different settings. However, in the majority of cases, our proposed method demonstrates superior computational efficiency, suggesting its practical advantage. From Figure \ref{fig:cm}, we observe that first-order algorithms encounter limitations in achieving the desired termination condition, particularly when $r$ is large. In contrast, our method, which consistently leverages second-order information, exhibits improved convergence compared to both first-order and second-order algorithms that do not consistently utilize second-order information, even for large $r$. This underscores the superiority of our method, which relies on the consistent utilization of second-order information.

\begin{table}[ht]
	\caption{~Comparison of CM. Significant results are presented in bold. The setting for $(n, r, \mu) = (1000, 20, 0.1)$ remains fixed, while one of the dimensions varies. The ManPG approach refers to the adaptive version proposed by \cite{chen2020proximal} (ManPG-Ada). LS-\uppercase\expandafter{\romannumeral1} denotes the ALMSSN technique with the first line-search method introduced by \cite{zhou2022semismooth}, while LS-\uppercase\expandafter{\romannumeral2} utilizes the second line-search method proposed by \cite{zhou2022semismooth}.}
	\label{tab:cm}
	\centering
	\begin{threeparttable}
		\setlength\tabcolsep{3mm}
		\resizebox{\linewidth}{!}
		{\begin{tabular}{@{}cccccccccl@{}}
				\toprule
				      &      & ManPG  & AmanPG & ARPG  & SOC   & MIALM         & LS-\uppercase\expandafter{\romannumeral1} & LS-\uppercase\expandafter{\romannumeral2} & ALMSRTR        \\ \midrule
				\multicolumn{8}{l}{Running time (s)}                                                                                                                                    \\
				$n$   & 200  & 23.61  & 5.76   & 9.16  & 2.91  & 20.93         & 1.58                                      & 2.06                                      & \textbf{1.30}  \\
				      & 500  & 115.56 & 13.95  & 13.27 & 7.15  & 24.11         & 5.05                                      & 7.61                                      & \textbf{4.80}  \\
				      & 1000 & 69.03  & 12.70  & 15.59 & 22.68 & 12.11         & 14.19                                     & 12.97                                     & \textbf{9.71}  \\
				      & 1500 & 62.00  & 59.74  & 50.38 & 48.56 & 70.28         & 29.28                                     & 34.99                                     & \textbf{21.69} \\
				      & 2000 & 42.02  & 46.15  & 49.50 & 46.71 & 73.64         & 29.56                                     & 26.98                                     & \textbf{20.03} \\
				$r$   & 10   & 14.64  & 13.88  & 13.28 & 16.60 & 30.24         & 51.76                                     & \textbf{8.96}                             & 11.61          \\
				      & 15   & 33.49  & 10.54  & 12.93 & 14.95 & \textbf{8.39} & 47.95                                     & 10.39                                     & \textbf{8.39}  \\
				      & 25   & 91.43  & 23.43  & 23.48 & 26.61 & 54.02         & 15.36                                     & 17.48                                     & \textbf{14.00} \\
				      & 30   & 86.31  & 28.11  & 27.07 & 22.09 & 29.51         & \textbf{10.18}                            & 13.06                                     & 13.35          \\
				$\mu$ & 0.05 & 93.51  & 16.05  & 14.16 & 6.72  & 8.74          & \textbf{6.06}                             & 6.72                                      & 7.07           \\
				      & 0.15 & 57.12  & 18.55  & 21.45 & 31.57 & 25.97         & 12.97                                     & 14.41                                     & \textbf{11.84} \\
				      & 0.20 & 51.48  & 13.70  & 21.62 & 34.21 & 23.88         & 19.24                                     & 19.73                                     & \textbf{10.68} \\
				      & 0.25 & 36.41  & 12.65  & 22.14 & 35.83 & 13.48         & 53.62                                     & 30.18                                     & \textbf{12.47} \\
				\multicolumn{8}{l}{Loss function value}                                                                                                                                 \\
				$n$   & 200  & 14.18  & 14.18  & 14.18 & 14.18 & 14.18         & 14.17                                     & 14.17                                     & 14.16          \\
				      & 500  & 18.63  & 18.63  & 18.63 & 18.63 & 18.63         & 18.63                                     & 18.63                                     & 18.63          \\
				      & 1000 & 23.37  & 23.36  & 23.36 & 23.36 & 23.36         & 23.36                                     & 23.36                                     & 23.36          \\
				      & 1500 & 26.97  & 26.86  & 26.86 & 26.86 & 26.86         & 26.86                                     & 26.86                                     & 26.86          \\
				      & 2000 & 29.98  & 29.74  & 29.74 & 29.74 & 29.74         & 29.74                                     & 29.74                                     & 29.74          \\
				$r$   & 10   & 10.77  & 10.74  & 10.74 & 10.74 & 10.74         & 10.74                                     & 10.74                                     & 10.74          \\
				      & 15   & 16.51  & 16.46  & 16.46 & 16.46 & 16.46         & 16.46                                     & 16.46                                     & 16.46          \\
				      & 25   & 32.01  & 32.00  & 32.00 & 32.00 & 32.00         & 32.00                                     & 32.00                                     & 32.00          \\
				      & 30   & 42.95  & 42.93  & 42.93 & 42.94 & 42.94         & 42.92                                     & 42.92                                     & 42.93          \\
				$\mu$ & 0.05 & 15.15  & 15.14  & 15.14 & 15.14 & 15.14         & 15.14                                     & 15.14                                     & 15.14          \\
				      & 0.15 & 31.05  & 31.01  & 31.01 & 31.01 & 31.01         & 31.01                                     & 31.01                                     & 31.01          \\
				      & 0.20 & 38.32  & 38.27  & 38.27 & 38.28 & 38.27         & 38.27                                     & 38.27                                     & 38.27          \\
				      & 0.25 & 45.36  & 45.26  & 45.26 & 45.29 & 45.26         & 45.26                                     & 45.26                                     & 45.26          \\ \bottomrule
			\end{tabular}}
	\end{threeparttable}
\end{table}

\begin{figure}[ht]
	\centering
	\subfigbottomskip=2pt
	\subfigcapskip=-5pt
	\subfigure{
		\includegraphics[width=0.32\linewidth]{./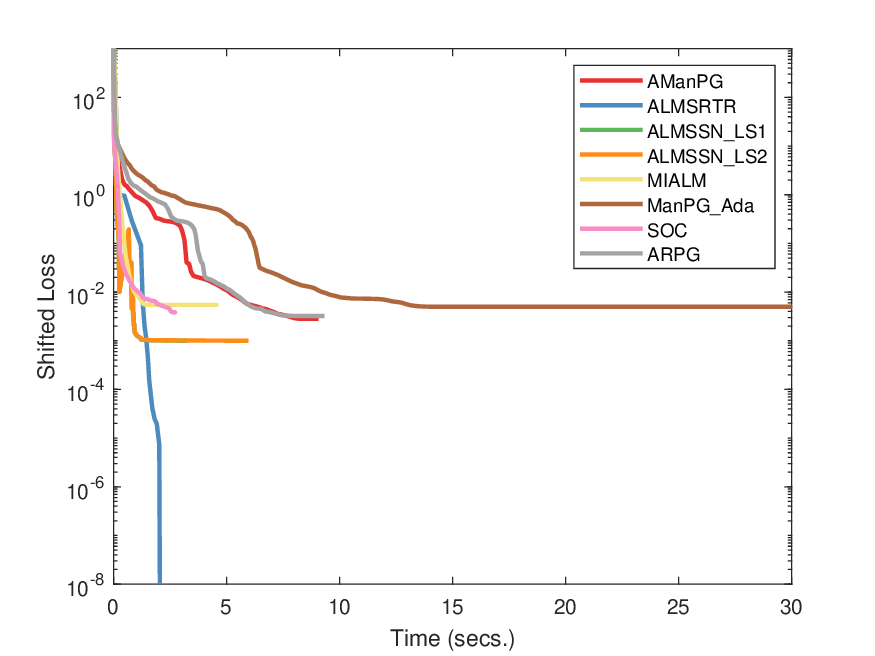}}
	\subfigure{
		\includegraphics[width=0.32\linewidth]{./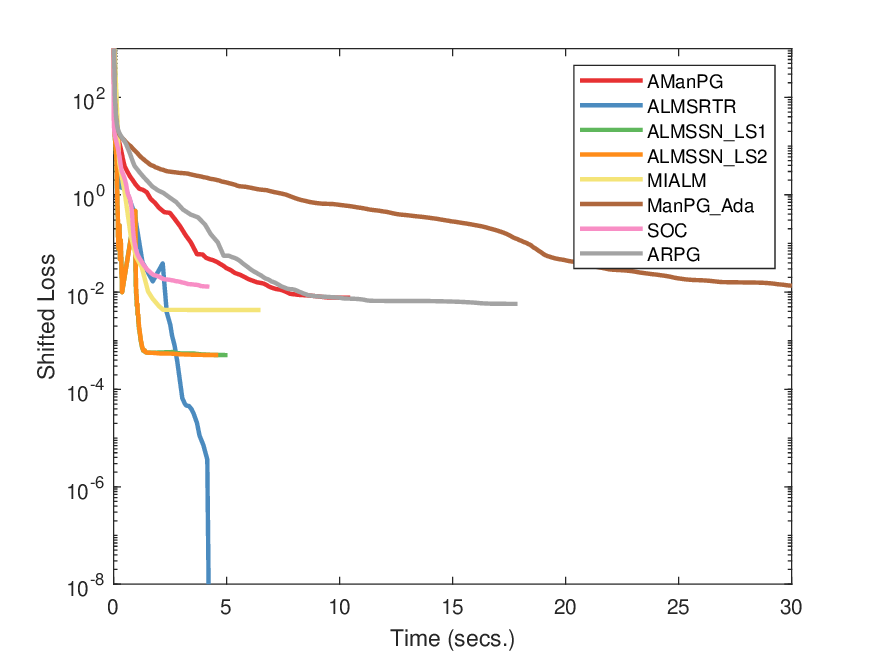}}
	\subfigure{
		\includegraphics[width=0.32\linewidth]{./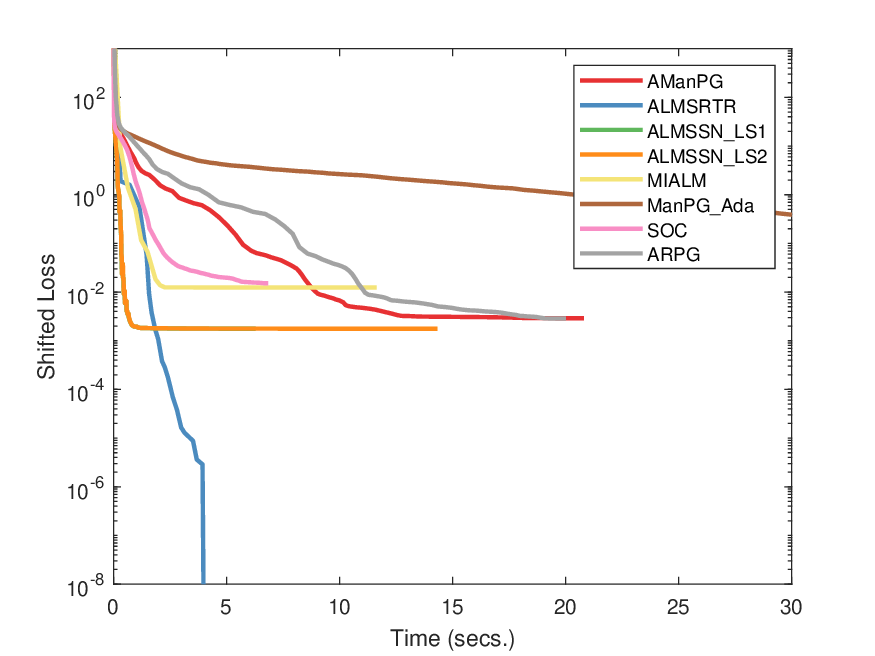}}
	\\
	\subfigure{
		\includegraphics[width=0.32\linewidth]{./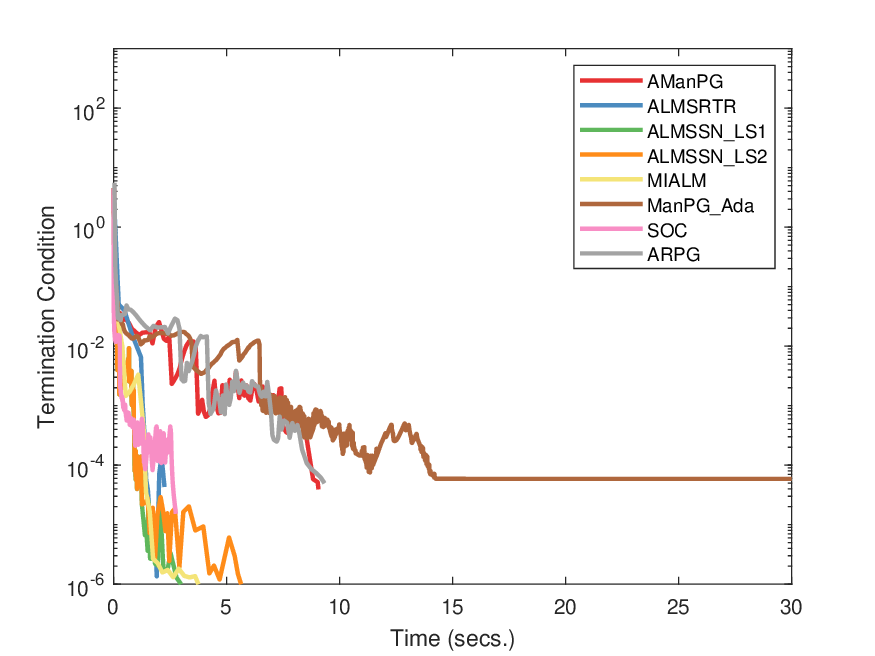}}
	\subfigure{
		\includegraphics[width=0.32\linewidth]{./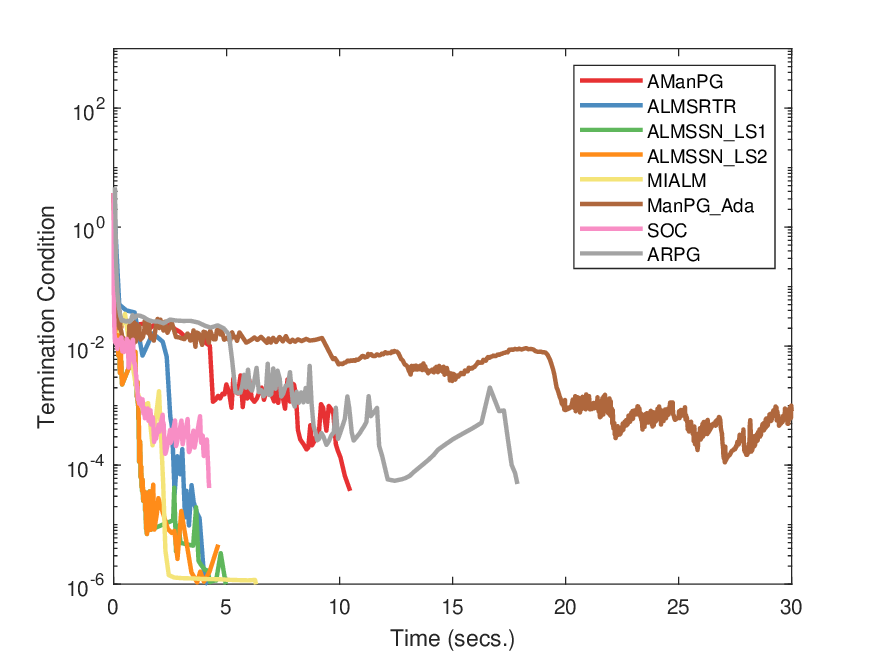}}
	\subfigure{
		\includegraphics[width=0.32\linewidth]{./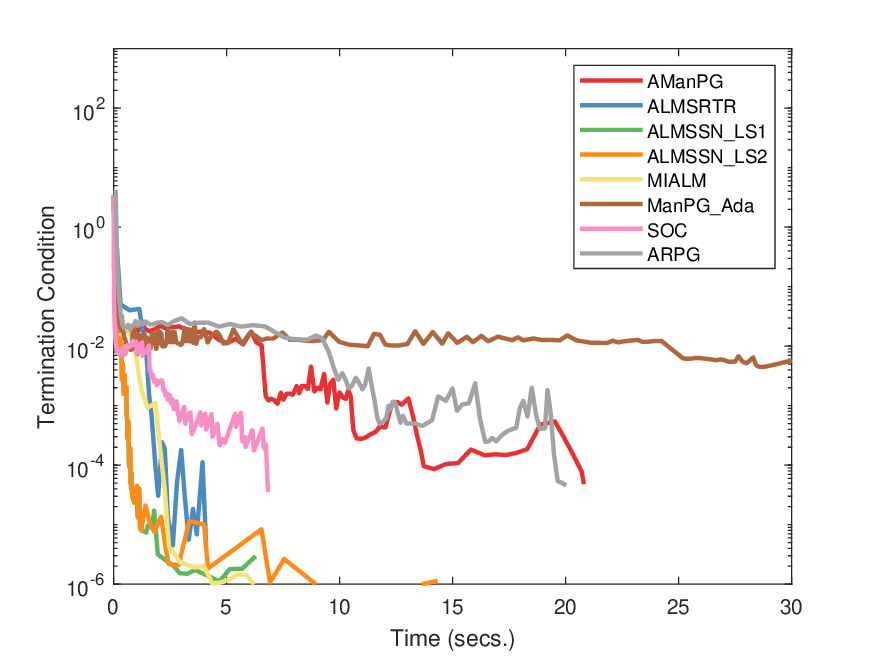}}
	\caption{\ \ Comparisons of CM with $(n, \mu) = (500, 0.05)$ and $r = 20, 30, 40$. The shifted loss represents the difference between the loss function value $F_k$ and the minimum of the values obtained at the termination of each algorithm. The second line displays the termination conditions, which are the sum of the left-hand components in \eqref{tc for us} for our algorithm and ALMSSN, as well as their respective counterparts for the other algorithms, plotted against time. ALMSSN\_LS1 denotes the ALMSSN technique with the first line-search method introduced by \cite{zhou2022semismooth}, while ALMSSN\_LS2 utilizes the second line-search method proposed by \cite{zhou2022semismooth}.}
	\label{fig:cm}
\end{figure}

\subsection{Sparse Principal Component Analysis}

This section presents the results of our experiments on the sparse principal component analysis (SPCA) problem.
SPCA is a widely used technique in data analysis that offers better interpretability compared to traditional principal component analysis. It achieves this by incorporating lasso regularization, which produces modified principal components with sparse loadings. The SPCA problem can be formulated as follows:
\begin{equation*}
	\min_{P \in \mathrm{St}(n, r)}\left\{-\mathrm{tr}\left(P^{T} A^{T} A P\right)+\mu\Vert P\Vert_{1}\right\},
\end{equation*}
where $\mathrm{St}(n,r) := \{P \in \mathbb{R}^{n \times r}:P^{T}P = I_r \}$ represents the Stiefel manifold, and $A \in \mathbb{R}^{p\times n}$ is the data matrix with $p$ observations and $n$ variables.

To evaluate the efficacy of our proposed algorithm, we conducted a comparative evaluation against ALMSSN, AManPG, ARPG, and SOC, utilizing a high level of accuracy. To ensure reliable convergence, we set the termination threshold at $5 \times 10^{-8}$ for synthetic data and $5 \times 10^{-7}$ for real data.
The termination condition remains consistent with the preceding section, with the exception that we substitute $H$ with $-A^{T}A$.

Due to not meeting our accuracy requirements or exhibiting excessively long runtimes, we excluded ManPG and MIALM from our experiments. The parameters and codes used for ALMSSN are consistent with those reported in \citep{zhou2022semismooth}.
To ensure a high level of accuracy, we incorporated several enhancements into the AManPG and ARPG codes. Our primary modification focused on refining the subproblem solution, aiming to improve overall accuracy. Furthermore, we introduced specific modifications to the AManPG code to address situations where its accuracy remains almost unchanged after a certain iteration number. The parameters employed for AManPG aligned with those documented in \citep{huang2022extension}, while the parameters for ARPG corresponded to those specified in \citep{huang2021riemannian}. For SOC, we maintained consistency by employing the parameters and code outlined in \citep{zhou2022semismooth}.

In Algorithm \ref{alg:alm}, we set $\epsilon_k=0.95^k$ and $\tau=0.9$. The initial value of $\kappa$ is set to $1.25$. In Algorithm \ref{alg:sstr}, the maximum number of iterations is set to $50$ when $n < 2000$, $70$ when $2000 \leq n < 3000$, and $90$ when $n \geq 3000$. Both the maximum number of iterations and the value of $\kappa$ are adaptively adjusted during each iteration based on the accuracy of the current iteration. The remaining parameters in our algorithm are consistent with those in the CM algorithm.

The data matrix $A$ is generated using two different methods:

\begin{enumerate}
	\item[(1)] \textbf{Synthetic.}
	      The data matrix $A$ is randomly generated using the method described in \citep{zhou2022semismooth}. Various ill-conditioned matrices $A \in \mathbb{R}^{50\times n}$ can be obtained in different dimensions.
	\item[(2)] \textbf{Real.}
	      The data matrix $A$ is selected from real datasets. The gene expression datasets \texttt{Arabidopsis} and \texttt{Leukemia} are obtained from \citep{li2010inexact}. Additionally, the NCI 60 dataset \texttt{Staunton} is chosen from \citep{culhane2003cross}. Finally, the yeast eQTL dataset, known as \texttt{realEQTL}, is selected from \citep{zhu2008integrating}.
\end{enumerate}

Our experimental findings unveiled the potential failure of AManPG in cases where the solution of the Lyapunov equation \cite[Equation (15) in Section 3]{huang2022extension} is non-unique or non-existent, even when utilizing the original unmodified code. To ensure a fair and equitable comparison among algorithms, we excluded scenarios where AManPG may encounter such challenges. Additionally, we observed that ARPG frequently terminated prematurely without achieving the desired accuracy due to its stopping criteria.

The results obtained from our experiments with synthetic data are presented in \cref{tab:SPCA} and visualized in \cref{fig:SPCA}. The outcomes obtained from real data are reported in \cref{tab:SPCAeral}. It proved to be challenging to achieve convergence of SOC and ARPG towards the termination condition threshold, whereas our proposed method consistently exhibited convergence properties akin to second-order algorithms. In terms of objective function values, our method demonstrated superior performance in almost all experiments. With the exception of AManPG, which was specifically designed for the SPCA problem, our method generally exhibited the lowest time consumption. These results serve as compelling evidence for the effectiveness of our second-order algorithm, which leverages second-order information, and its superiority over alternative algorithms in tackling the SPCA problem. The experimental results further reinforce the assertion made in the previous section regarding the exceptional performance of our algorithm in similar problem domains.

\section{Conclusion}
In this work, we have proposed a trust region method for minimizing an SC$^1$ function on a Riemannian manifold. We have established our method's global convergence, local convergence near nondegenerate local minima, and superlinear local convergence rate under relaxed smoothness requirement on the objective function and retraction.
We have also provided the first theoretical guarantee of trust region's inactivity near nondegenerate local minima for trust region methods applying to SC$^1$ functions, which is also new in the Euclidean case.
To demonstrate the superiority of our method, we have applied it to solve the subproblem of the augmented Lagrangian method on manifolds and performed extensive numerical experiments. Our results show that our proposed method outperforms existing state-of-the-art methods, achieving better convergence performance.

\begin{table}[ht]
	\caption{~Comparison of SPCA with synthetic data. Bold numbers indicate superior results. The data matrix $A\in \mathbb{R}^{50\times n}$. The tuple $(n, r, \mu) = (2000, 20, 1.00)$, with one of the elements varying.}
	\label{tab:SPCA}
	\centering
	\begin{threeparttable}
		\setlength\tabcolsep{3mm}
		\resizebox{\linewidth}{!}
		{\begin{tabular}{@{}ccccccccc@{}}
				\toprule
				      &      & AManPG         & ARPG     & SOC      & LS-\uppercase\expandafter{\romannumeral1} & LS-\uppercase\expandafter{\romannumeral2} & ALMSRTR           \\ \midrule
				\multicolumn{7}{l}{Running time(s)}                                                                                                                             \\
				$n$   & 500  & 15.02          & 56.77    & 34.25    & 6.82                                      & 6.52                                      & \textbf{5.89}     \\
				      & 1000 & 21.32          & 74.65    & 99.26    & 14.66                                     & 12.83                                     & \textbf{10.80}    \\
				      & 1500 & 30.18          & 86.53    & 212.87   & 26.75                                     & 22.17                                     & \textbf{17.60}    \\
				      & 2000 & 31.28          & 77.78    & 365.03   & 36.24                                     & 29.46                                     & \textbf{27.08}    \\
				      & 2500 & \textbf{36.34} & 84.11    & 535.88   & 51.38                                     & 42.45                                     & 38.08             \\
				      & 3000 & \textbf{42.09} & 79.32    & 768.18   & 67.21                                     & 47.13                                     & 54.81             \\
				$r$   & 5    & \textbf{1.47}  & 16.54    & 226.60   & 14.88                                     & 7.76                                      & 8.79              \\
				      & 10   & \textbf{4.96}  & 8.06     & 298.78   & 25.07                                     & 18.19                                     & 21.18             \\
				      & 15   & \textbf{17.19} & 23.30    & 337.94   & 31.18                                     & 24.61                                     & 22.50             \\
				      & 25   & 45.01          & 163.72   & 405.76   & 51.50                                     & 40.36                                     & \textbf{33.31}    \\
				$\mu$ & 0.25 & \textbf{45.87} & 57.965   & 369.03   & 91.646                                    & 109.44                                    & 70.67             \\
				      & 0.50 & \textbf{33.01} & 42.74    & 369.25   & 52.09                                     & 51.33                                     & 37.94             \\
				      & 0.75 & 38.32          & 59.69    & 369.09   & 34.41                                     & \textbf{32.88}                            & 33.27             \\
				      & 1.25 & 34.75          & 91.48    & 368.12   & 34.28                                     & \textbf{29.67}                            & 37.43             \\
				\multicolumn{7}{l}{Loss function:$\left\{-\mathrm{tr}\left(P^{T} A^{T} A P\right)+\mu\|P\|_{1}\right\}$}                                                        \\
				$n$   & 500  & -347.78        & -347.75  & -346.87  & -347.59                                   & -347.59                                   & \textbf{-349.15}  \\
				      & 1000 & -762.22        & -762.19  & -761.43  & -761.90                                   & -761.90                                   & \textbf{-763.53}  \\
				      & 1500 & -1209.59       & -1209.53 & -1209.42 & -1209.70                                  & -1209.70                                  & \textbf{-1211.15} \\
				      & 2000 & -1621.05       & -1621.37 & -1620.70 & -1621.05                                  & -1621.05                                  & \textbf{-1622.98} \\
				      & 2500 & -2082.34       & -2082.72 & -2080.97 & -2082.82                                  & -2082.82                                  & \textbf{-2084.18} \\
				      & 3000 & -2516.72       & -2516.77 & -2515.30 & -2517.06                                  & -2517.06                                  & \textbf{-2518.69} \\
				$r$   & 5    & -1520.36       & -1520.33 & -1520.32 & \textbf{-1520.47}                         & \textbf{-1520.47}                         & -1520.42          \\
				      & 10   & -1630.61       & -1630.62 & -1630.35 & -1630.36                                  & -1630.36                                  & \textbf{-1631.21} \\
				      & 15   & -1631.77       & -1631.73 & -1631.46 & -1632.13                                  & -1632.13                                  & \textbf{-1633.14} \\
				      & 25   & -1606.38       & -1606.50 & -1605.76 & -1606.85                                  & -1606.85                                  & \textbf{-1607.78} \\
				$\mu$ & 0.25 & -1873.23       & -1873.18 & -1870.19 & -1873.26                                  & -1873.26                                  & \textbf{-1873.39} \\
				      & 0.50 & -1781.25       & -1781.05 & -1779.03 & -1781.27                                  & -1781.27                                  & \textbf{-1781.89} \\
				      & 0.75 & -1700.87       & -1700.10 & -1699.01 & -1700.12                                  & -1700.12                                  & \textbf{-1701.46} \\
				      & 1.25 & -1555.04       & -1555.02 & -1555.26 & -1555.56                                  & 1555.56                                   & \textbf{-1557.73} \\ \bottomrule
			\end{tabular}}

	\end{threeparttable}
\end{table}

\begin{figure}[th]
	\centering
	\subfigbottomskip=2pt
	\subfigcapskip=-5pt
	\subfigure{
		\includegraphics[width=0.32\linewidth]{./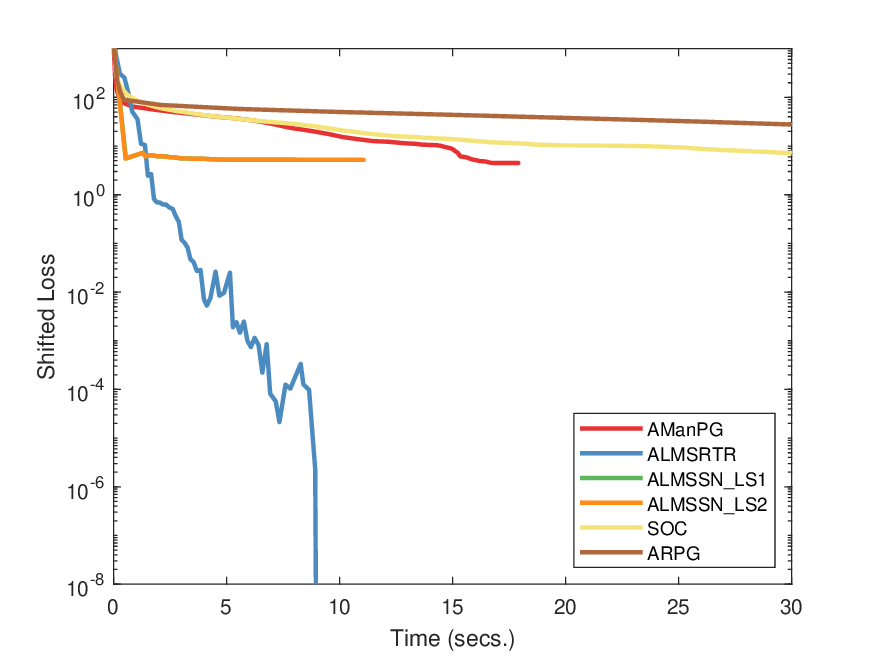}}
	\subfigure{
		\includegraphics[width=0.32\linewidth]{./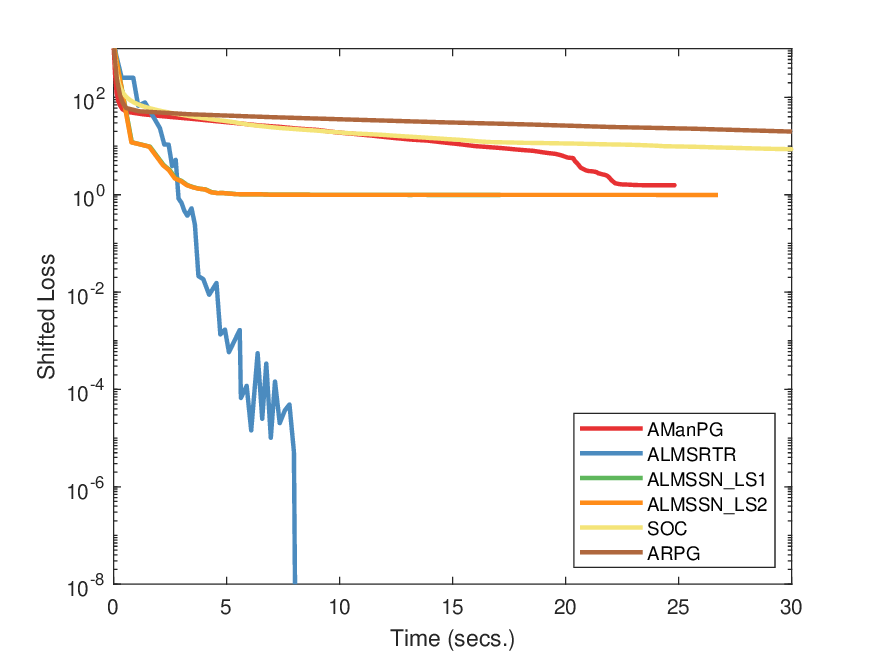}}
	\subfigure{
		\includegraphics[width=0.32\linewidth]{./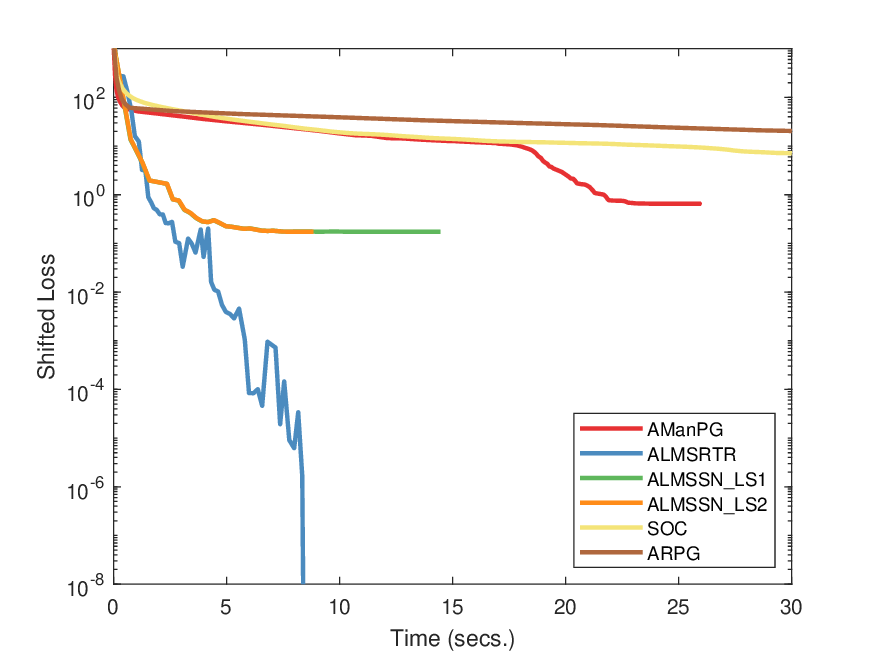}}
	\\
	\subfigure{
		\includegraphics[width=0.32\linewidth]{./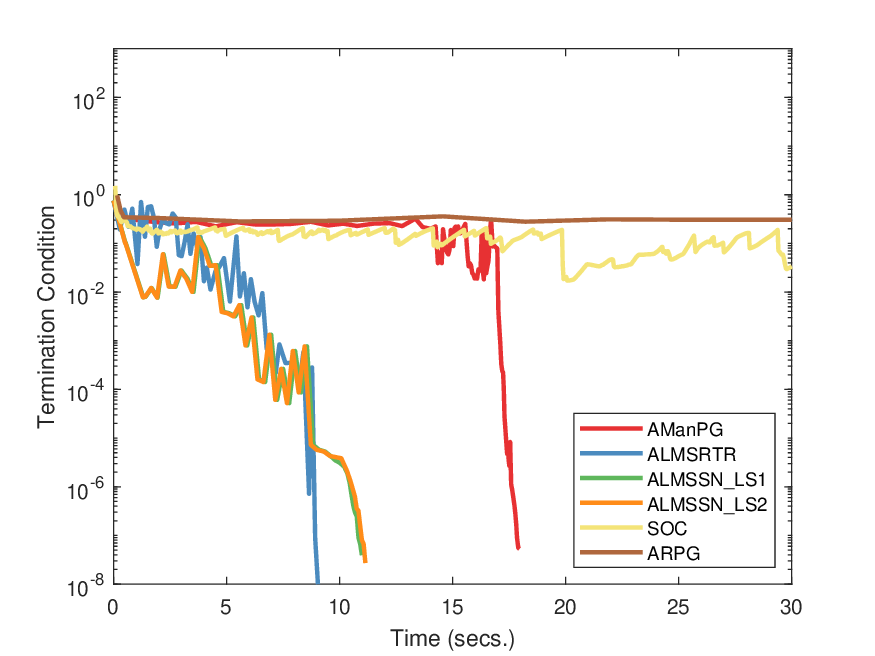}}
	\subfigure{
		\includegraphics[width=0.32\linewidth]{./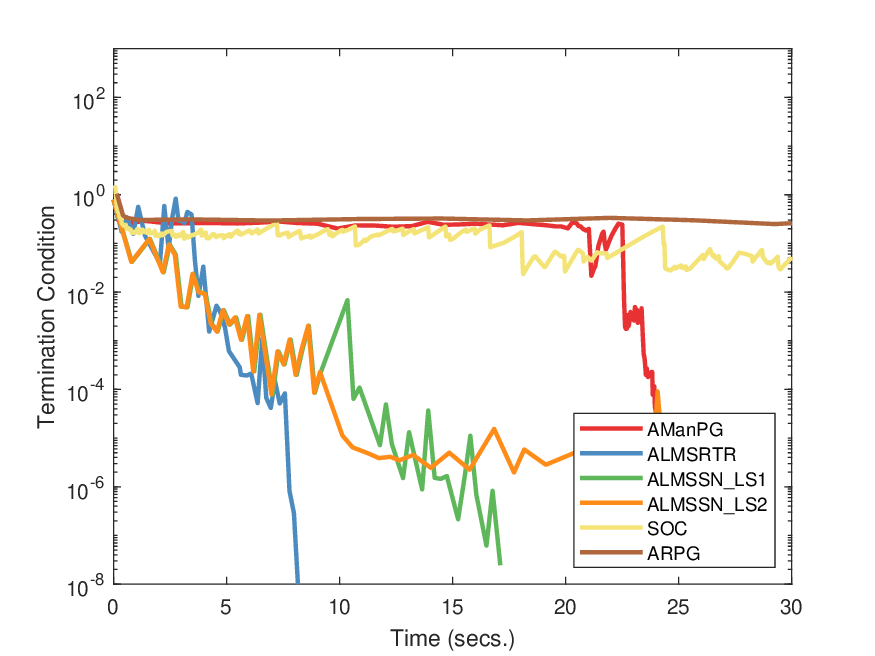}}
	\subfigure{
		\includegraphics[width=0.32\linewidth]{./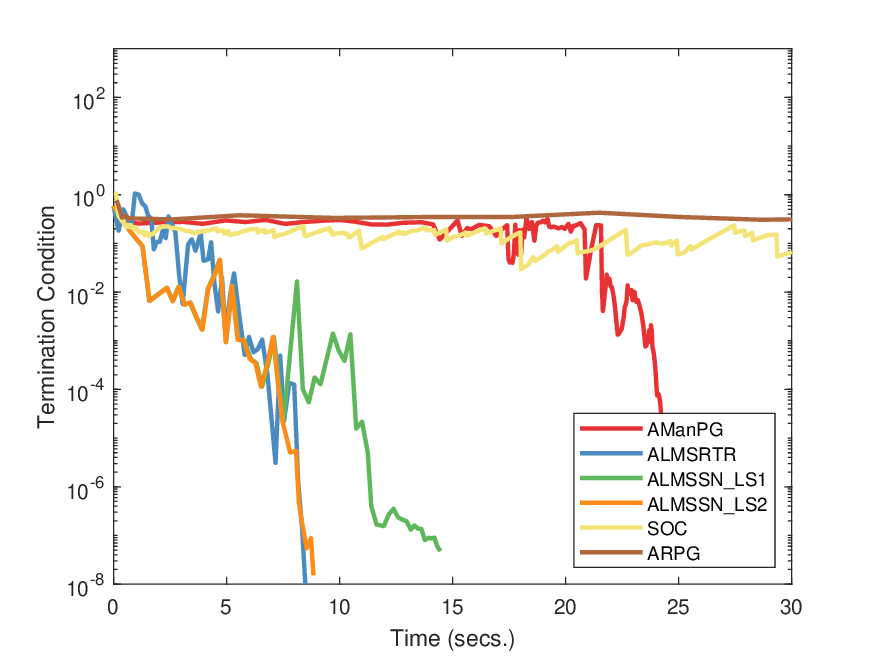}}
	\caption{\ \ Comparisons of SPCA with synthetic data, $(n, r , \mu) = (1000, 20, 1.00)$, and three different $A^TA$. The definitions in this figure are similar to those in Figure \ref{fig:cm}. }
	\label{fig:SPCA}
\end{figure}

\begin{table}[ht]
	\caption{~Comparison of SPCA with real data. Bold numbers indicate superior results. The slash \texttt{/} indicates that the value of the termination condition at the maximum number of iterations is still greater than $10^{-3}$, indicating that the manifold constraint is not fully satisfied and the loss function value is meaningless. It should be noted that our algorithm ALMSRTR meets the threshold of the termination condition for all settings in this table.}
	\label{tab:SPCAeral}
	\centering
	\begin{threeparttable}
		\setlength\tabcolsep{3mm}
		\resizebox{\linewidth}{!}
		{\begin{tabular}{@{}ccccccccc@{}}
				\toprule
				Dataset                      & $(n,r,\mu)$        & AManPG             & ARPG               & SOC       & LS-\uppercase\expandafter{\romannumeral1} & LS-\uppercase\expandafter{\romannumeral2} & ALMSRTR             \\ \midrule
				\multicolumn{8}{l}{Running time(s)}                                                                                                                                                                                   \\
				\multirow{4}{*}{Arabidopsis} & $(834, 10, 0.50)$  & 14.16              & 28.58              & 34.83     & 37.32                                     & 11.44                                     & \textbf{9.93}       \\
				                             & $(834, 10, 0.25)$  & \textbf{6.05}      & 28.14              & 34.64     & 27.88                                     & 27.22                                     & 7.14                \\
				                             & $(834, 15, 0.50)$  & 22.16              & 36.37              & 45.09     & 30.73                                     & 30.31                                     & \textbf{19.17}      \\
				                             & $(834, 15, 0.25)$  & \textbf{19.50}     & 43.30              & 44.67     & 38.73                                     & 36.21                                     & 23.46               \\
				\multirow{4}{*}{Leukemia}    & $(1255, 10, 0.50)$ & 12.98              & 29.21              & 97.25     & 14.33                                     & \textbf{11.76}                            & 18.83               \\
				                             & $(1255, 10, 0.25)$ & \textbf{8.02}      & 35.22              & 96.83     & 33.36                                     & 57.98                                     & 13.94               \\
				                             & $(1255, 15, 0.50)$ & \textbf{16.18}     & 48.22              & 109.90    & 55.04                                     & 46.51                                     & 21.79               \\
				                             & $(1255, 15, 0.25)$ & 33.65              & 45.83              & 109.13    & 49.12                                     & 64.99                                     & \textbf{21.46}      \\
				\multirow{4}{*}{realEQTL}    & $(1260, 10, 0.50)$ & 37.00              & 35.23              & 96.64     & 40.47                                     & 40.53                                     & \textbf{9.99}       \\
				                             & $(1260, 10, 0.25)$ & 36.37              & \textbf{35.43}     & 97.10     & 64.28                                     & 65.10                                     & 38.91               \\
				                             & $(1260, 15, 0.50)$ & 50.30              & 50.17              & 111.09    & 55.80                                     & 59.05                                     & \textbf{41.13}      \\
				                             & $(1260, 15, 0.25)$ & 49.73              & 49.75              & 111.13    & 136.97                                    & 140.96                                    & \textbf{31.08}      \\
				\multirow{4}{*}{Staunton100} & $(1517, 10, 0.50)$ & \textbf{9.35}      & 37.50              & 150.69    & 10.19                                     & 10.10                                     & 10.15               \\
				                             & $(1517, 10, 0.25)$ & 9.59               & 37.09              & 149.28    & 44.03                                     & 48.41                                     & \textbf{8.78}       \\
				                             & $(1517, 15, 0.50)$ & \textbf{7.85}      & 44.25              & 163.09    & 24.41                                     & 20.19                                     & 40.04               \\
				                             & $(1517, 15, 0.25)$ & \textbf{21.08}     & 51.93              & 162.80    & 29.52                                     & 30.56                                     & 33.25               \\
				\multirow{4}{*}{Staunton200} & $(2455, 10, 0.50)$ & \textbf{16.83}     & 39.68              & 450.72    & 98.19                                     & 24.75                                     & 29.02               \\
				                             & $(2455, 10, 0.25)$ & \textbf{7.99}      & 43.35              & 451.48    & 70.53                                     & 80.20                                     & 37.96               \\
				                             & $(2455, 15, 0.50)$ & \textbf{19.26}     & 61.04              & 461.16    & 62.75                                     & 55.36                                     & 70.18               \\
				                             & $(2455, 15, 0.25)$ & 44.01              & 54.17              & 461.32    & 91.42                                     & 60.66                                     & \textbf{35.24}      \\
				\multicolumn{6}{l}{Loss function:$\left\{-\mathrm{tr}\left(P^{T} A^{T} A P\right)+\mu\|P\|_{1}\right\}$}                                                                                                              \\
				\multirow{4}{*}{Arabidopsis} & $(834, 10, 0.50)$  & \textbf{-65867.82} & -65867.73          & -65864.81 & -65867.42                                 & -65867.42                                 & -65867.51           \\
				                             & $(834, 10, 0.25)$  & -65922.30          & -65922.30          & -65938.12 & -65922.14                                 & -65922.14                                 & \textbf{-65922.31}  \\
				                             & $(834, 15, 0.50)$  & -72553.02          & -72552.31          & -72546.44 & -72552.46                                 & -72552.46                                 & \textbf{-72553.54}  \\
				                             & $(834, 15, 0.25)$  & -72633.64          & -72633.51          & -72629.79 & -72633.59                                 & -72633.59                                 & \textbf{-72634.14}  \\
				\multirow{4}{*}{Leukemia}    & $(1255, 10, 0.50)$ & -53787.19          & \textbf{-53787.27} & -53780.28 & -53787.02                                 & -53787.02                                 & -53787.26           \\
				                             & $(1255, 10, 0.25)$ & -53854.00          & -53854.00          & -53849.77 & -53853.94                                 & -53853.94                                 & \textbf{-53854.03}  \\
				                             & $(1255, 15, 0.50)$ & -60108.81          & -60109.28          & -60101.74 & -60109.03                                 & -60109.03                                 & \textbf{-60109.76}  \\
				                             & $(1255, 15, 0.25)$ & -60208.75          & -60208.81          & -60203.91 & -60208.78                                 & -60208.77                                 & \textbf{-60209.07}  \\
				\multirow{4}{*}{realEQTL}    & $(1260, 10, 0.50)$ & -191911.63         & -191911.63         & /         & -191912.62                                & -191912.62                                & \textbf{-191912.85} \\
				                             & $(1260, 10, 0.25)$ & -191972.26         & -191972.26         & /         & -191972.76                                & -191972.76                                & \textbf{-191972.88} \\
				                             & $(1260, 15, 0.50)$ & -207390.14         & -207390.16         & /         & -207390.17                                & -207390.17                                & \textbf{-207390.61} \\
				                             & $(1260, 15, 0.25)$ & -207477.62         & -207477.61         & /         & -207477.65                                & -207477.65                                & \textbf{-207477.87} \\
				\multirow{4}{*}{Staunton100} & $(1517, 10, 0.50)$ & -37081.99          & -37080.75          & -37080.17 & -37081.82                                 & -37081.82                                 & \textbf{-37082.06}  \\
				                             & $(1517, 10, 0.25)$ & -37147.01          & -37147.02          & -37146.08 & -37147.54                                 & -37147.54                                 & \textbf{-37147.66}  \\
				                             & $(1517, 15, 0.50)$ & -42827.91          & -42827.66          & -42822.76 & -42827.88                                 & -42827.88                                 & \textbf{-42828.22}  \\
				                             & $(1517, 15, 0.25)$ & \textbf{-42926.71} & -42926.59          & -42923.03 & -42926.59                                 & -42926.59                                 & -42926.68           \\
				\multirow{4}{*}{Staunton200} & $(2455, 10, 0.50)$ & -43880.94          & -43880.94          & -43878.25 & -43880.78                                 & -43880.78                                 & \textbf{-43880.98}  \\
				                             & $(2455, 10, 0.25)$ & \textbf{-43962.73} & \textbf{-43962.73} & -43960.11 & -43962.68                                 & -43962.68                                 & \textbf{-43962.73}  \\
				                             & $(2455, 15, 0.50)$ & -50790.16          & -50790.20          & -50779.81 & -50790.27                                 & -50790.27                                 & \textbf{-50790.29}  \\
				                             & $(2455, 15, 0.25)$ & -50912.84          & -50912.86          & -50905.02 & -50912.65                                 & -50912.65                                 & \textbf{-50912.93}  \\ \bottomrule
			\end{tabular}}

	\end{threeparttable}
\end{table}
%

\bibliography{rtrssn.bib}

\begin{thebibliography}{56}
\providecommand{\natexlab}[1]{#1}
\providecommand{\url}[1]{\texttt{#1}}
\expandafter\ifx\csname urlstyle\endcsname\relax
  \providecommand{\doi}[1]{doi: #1}\else
  \providecommand{\doi}{doi: \begingroup \urlstyle{rm}\Url}\fi

\bibitem[Absil et~al.(2006)Absil, Baker, and Gallivan]{absil2006convergence}
P.-A. Absil, C.~G. Baker, and K.~A. Gallivan.
\newblock Convergence analysis of riemannian trust-region methods.
\newblock Technical report, 2006.
\newblock URL
  \url{http://www.optimization-online.org/DB_HTML/2006/06/1416.html}.

\bibitem[Absil et~al.(2007)Absil, Baker, and
  Gallivan]{absilTrustRegionMethodsRiemannian2007}
P.-A. Absil, C.~G. Baker, and K.~A. Gallivan.
\newblock Trust-region methods on riemannian manifolds.
\newblock \emph{Foundations of Computational Mathematics}, 7:\penalty0
  303--330, 2007.

\bibitem[Absil et~al.(2008)Absil, Mahony, and
  Sepulchre]{absilOptimizationAlgorithmsMatrix2008}
P.-A. Absil, R.~Mahony, and R.~Sepulchre.
\newblock \emph{Optimization Algorithms on Matrix Manifolds}.
\newblock {Princeton University Press}, 2008.
\newblock ISBN 978-0-691-13298-3.

\bibitem[Beck and Rosset(2023)]{beck2023dynamic}
A.~Beck and I.~Rosset.
\newblock A dynamic smoothing technique for a class of nonsmooth optimization
  problems on manifolds.
\newblock Technical report, School of Mathematics Sciences, Tel Aviv
  University, 2023.
\newblock URL \url{https://www.tau.ac.il/~becka/manifold7.pdf}.

\bibitem[Bendory et~al.(2017)Bendory, Eldar, and Boumal]{bendory2017non}
T.~Bendory, Y.~C. Eldar, and N.~Boumal.
\newblock Non-convex phase retrieval from stft measurements.
\newblock \emph{IEEE Transactions on Information Theory}, 64\penalty0
  (1):\penalty0 467--484, 2017.

\bibitem[Boothby(2003)]{boothbyIntroductionDifferentiableManifolds2003}
W.~M. Boothby.
\newblock \emph{An Introduction to Differentiable Manifolds and {{Riemannian}}
  Geometry}.
\newblock {Academic Press}, rev. 2nd ed edition, 2003.
\newblock ISBN 978-0-12-116051-7.

\bibitem[Boumal(2016)]{boumal2016nonconvex}
N.~Boumal.
\newblock Nonconvex phase synchronization.
\newblock \emph{SIAM Journal on Optimization}, 26\penalty0 (4):\penalty0
  2355--2377, 2016.

\bibitem[Boumal(2023)]{boumalIntroductionOptimizationSmooth2020a}
N.~Boumal.
\newblock \emph{An introduction to optimization on smooth manifolds}.
\newblock Cambridge University Press, 2023.
\newblock \doi{10.1017/9781009166164}.

\bibitem[Boumal and Absil(2011)]{boumal2011rtrmc}
N.~Boumal and P.-A. Absil.
\newblock Rtrmc: A riemannian trust-region method for low-rank matrix
  completion.
\newblock \emph{Advances in neural information processing systems}, 24, 2011.

\bibitem[Cai et~al.(2019)Cai, Liu, and Wang]{cai2019fast}
J.-F. Cai, H.~Liu, and Y.~Wang.
\newblock Fast rank-one alternating minimization algorithm for phase retrieval.
\newblock \emph{Journal of Scientific Computing}, 79\penalty0 (1):\penalty0
  128--147, 2019.

\bibitem[Chen et~al.(2020)Chen, Ma, So, and Zhang]{chen2020proximal}
S.~Chen, S.~Ma, M.-C.~A. So, and T.~Zhang.
\newblock Proximal gradient method for nonsmooth optimization over the stiefel
  manifold.
\newblock \emph{SIAM Journal on Optimization}, 30\penalty0 (1):\penalty0
  210--239, 2020.

\bibitem[Chen et~al.(2016)Chen, Ji, and You]{chen2016augmented}
W.~Chen, H.~Ji, and Y.~You.
\newblock An augmented lagrangian method for 1-regularized optimization
  problems with orthogonality constraints.
\newblock \emph{SIAM Journal on Scientific Computing}, 38\penalty0
  (4):\penalty0 B570--B592, 2016.

\bibitem[Cho and Lee(2017)]{cho2017riemannian}
M.~Cho and J.~Lee.
\newblock Riemannian approach to batch normalization.
\newblock \emph{Advances in Neural Information Processing Systems}, 30, 2017.

\bibitem[Conn et~al.(2000)Conn, Gould, and Toint]{conn2000Trustregion}
A.~R. Conn, N.~I. Gould, and P.~L. Toint.
\newblock \emph{Trust Region Methods}.
\newblock {SIAM}, 2000.

\bibitem[Culhane et~al.(2003)Culhane, Perri{\`e}re, and
  Higgins]{culhane2003cross}
A.~C. Culhane, G.~Perri{\`e}re, and D.~G. Higgins.
\newblock Cross-platform comparison and visualisation of gene expression data
  using co-inertia analysis.
\newblock \emph{BMC bioinformatics}, 4\penalty0 (1):\penalty0 1--15, 2003.

\bibitem[Daniilidis et~al.(2018)Daniilidis, Deville, Durand-Cartagena, and
  Rifford]{daniilidisSelfcontractedCurvesRiemannian2018}
A.~Daniilidis, R.~Deville, E.~Durand-Cartagena, and L.~Rifford.
\newblock Self-contracted curves in riemannian manifolds.
\newblock \emph{Journal of Mathematical Analysis and Applications},
  457\penalty0 (2):\penalty0 1333--1352, 2018.

\bibitem[de~Oliveira and Ferreira(2020)]{deoliveiraNewtonMethodFinding2020}
F.~R. de~Oliveira and O.~P. Ferreira.
\newblock Newton method for finding a singularity of a special class of locally
  lipschitz continuous vector fields on riemannian manifolds.
\newblock \emph{Journal of Optimization Theory and Applications}, 185:\penalty0
  522--539, 2020.

\bibitem[Deng and Peng(2022)]{kangkang2022inexact}
K.~Deng and Z.~Peng.
\newblock A manifold inexact augmented lagrangian method for nonsmooth
  optimization on riemannian submanifolds in euclidean space.
\newblock \emph{IMA Journal of Numerical Analysis}, 2022.

\bibitem[Diepeveen and Lellmann(2021)]{diepeveen2021inexact}
W.~Diepeveen and J.~Lellmann.
\newblock An inexact semismooth newton method on riemannian manifolds with
  application to duality-based total variation denoising.
\newblock \emph{SIAM Journal on Imaging Sciences}, 14\penalty0 (4):\penalty0
  1565--1600, 2021.

\bibitem[do~Carmo(2016)]{carmoDifferentialGeometryCurves2016}
M.~P. do~Carmo.
\newblock \emph{Differential Geometry of Curves and Surfaces}.
\newblock {Dover Publications Inc}, revised \& upyeard second edition edition,
  2016.
\newblock ISBN 978-0-486-80699-0.

\bibitem[Gabay(1982)]{gabay1982minimizing}
D.~Gabay.
\newblock Minimizing a differentiable function over a differential manifold.
\newblock \emph{Journal of Optimization Theory and Applications}, 37\penalty0
  (2):\penalty0 177--219, 1982.

\bibitem[Ghahraei et~al.(2017)Ghahraei, Hosseini, and
  Pouryayevali]{ghahraei2017PseudoJacobiancharacterization}
E.~Ghahraei, S.~Hosseini, and M.~R. Pouryayevali.
\newblock Pseudo-jacobian and characterization of monotone vector fields on
  riemannian manifolds.
\newblock \emph{J. Convex Anal}, 24\penalty0 (1):\penalty0 149--168, 2017.

\bibitem[Grapiglia and Nesterov(2017)]{grapiglia2017RegularizedNewton}
G.~N. Grapiglia and Y.~Nesterov.
\newblock Regularized {{Newton Methods}} for {{Minimizing Functions}} with
  {{Hölder Continuous Hessians}}.
\newblock \emph{SIAM J. Optim.}, 27\penalty0 (1):\penalty0 478--506, 2017.
\newblock ISSN 1052-6234, 1095-7189.
\newblock \doi{10.1137/16M1087801}.
\newblock URL \url{http://epubs.siam.org/doi/10.1137/16M1087801}.

\bibitem[Grohs and Hosseini(2016)]{grohs2016nonsmooth}
P.~Grohs and S.~Hosseini.
\newblock Nonsmooth trust region algorithms for locally lipschitz functions on
  riemannian manifolds.
\newblock \emph{IMA Journal of Numerical Analysis}, 36\penalty0 (3):\penalty0
  1167--1192, 2016.

\bibitem[{Hiriart-Urruty} et~al.(1984){Hiriart-Urruty}, Strodiot, and
  Nguyen]{hiriart-urruty1984GeneralizedHessian}
J.-B. {Hiriart-Urruty}, J.-J. Strodiot, and V.~H. Nguyen.
\newblock Generalized {{Hessian}} matrix and second-order optimality conditions
  for problems {{withC}} 1,1 data.
\newblock \emph{Applied Mathematics \& Optimization}, 11\penalty0 (1):\penalty0
  43--56, Feb. 1984.
\newblock ISSN 0095-4616, 1432-0606.
\newblock \doi{10.1007/BF01442169}.

\bibitem[Hu et~al.(2018)Hu, Milzarek, Wen, and Yuan]{hu2018adaptive}
J.~Hu, A.~Milzarek, Z.~Wen, and Y.~Yuan.
\newblock Adaptive quadratically regularized newton method for riemannian
  optimization.
\newblock \emph{SIAM Journal on Matrix Analysis and Applications}, 39\penalty0
  (3):\penalty0 1181--1207, 2018.

\bibitem[Hu et~al.(2020)Hu, Liu, Wen, and Yuan]{hu2020brief}
J.~Hu, X.~Liu, Z.-W. Wen, and Y.-X. Yuan.
\newblock A brief introduction to manifold optimization.
\newblock \emph{Journal of the Operations Research Society of China},
  8\penalty0 (2):\penalty0 199--248, 2020.

\bibitem[Huang and Wu(2013)]{huang2013hypersurfaces}
L.-H. Huang and D.~Wu.
\newblock Hypersurfaces with nonnegative scalar curvature.
\newblock \emph{Journal of Differential Geometry}, 95\penalty0 (2):\penalty0
  249--278, 2013.
\newblock Publisher: Lehigh University.

\bibitem[Huang and Wei(2021)]{huang2021riemannian}
W.~Huang and K.~Wei.
\newblock Riemannian proximal gradient methods.
\newblock \emph{Mathematical Programming}, pages 1--43, 2021.

\bibitem[Huang and Wei(2022)]{huang2022extension}
W.~Huang and K.~Wei.
\newblock An extension of fast iterative shrinkage-thresholding algorithm to
  riemannian optimization for sparse principal component analysis.
\newblock \emph{Numerical Linear Algebra with Applications}, 29\penalty0
  (1):\penalty0 e2409, 2022.

\bibitem[Jiang and Qi(1996)]{jiang1996Globallysuperlinearly}
H.~Jiang and L.~Qi.
\newblock Globally and superlinearly convergent trust-region algorithm for
  convex sc 1-minimization problems and its application to stochastic programs.
\newblock \emph{Journal of optimization theory and applications}, 90:\penalty0
  649--669, 1996.

\bibitem[Kummer(1988)]{kummer1988Newtonmethod}
B.~Kummer.
\newblock Newton’s method for non-differentiable functions.
\newblock \emph{Advances in mathematical optimization}, 45\penalty0
  (1988):\penalty0 114--125, 1988.

\bibitem[Lai and Osher(2014)]{lai2014splitting}
R.~Lai and S.~Osher.
\newblock A splitting method for orthogonality constrained problems.
\newblock \emph{Journal of Scientific Computing}, 58\penalty0 (2):\penalty0
  431--449, 2014.

\bibitem[Lee(2019)]{lee2019Geometricrelativity}
D.~A. Lee.
\newblock \emph{Geometric relativity}.
\newblock Number 201 in Graduate studies in mathematics. American Mathematical
  Society, Providence, Rhode Island, 2019.
\newblock ISBN 978-1-4704-5081-6 978-1-4704-6623-7.

\bibitem[Lee(2013)]{lee2012Smoothmanifolds}
J.~M. Lee.
\newblock \emph{Introduction to Smooth Manifolds}.
\newblock Springer, 2013.

\bibitem[Lee(2018)]{lee2018IntroductionRiemannian}
J.~M. Lee.
\newblock \emph{Introduction to Riemannian manifolds}, volume~2.
\newblock Springer, 2018.

\bibitem[Li et~al.(2010)Li, Toh, et~al.]{li2010inexact}
L.~Li, K.-C. Toh, et~al.
\newblock An inexact interior point method for $\ell_1$-regularized sparse
  covariance selection.
\newblock \emph{Math. Program. Comput.}, 2\penalty0 (3-4):\penalty0 291--315,
  2010.

\bibitem[Liu et~al.(2017)Liu, Yue, and So]{liu2017estimation}
H.~Liu, M.-C. Yue, and A.~M.-C. So.
\newblock On the estimation performance and convergence rate of the generalized
  power method for phase synchronization.
\newblock \emph{SIAM Journal on Optimization}, 27\penalty0 (4):\penalty0
  2426--2446, 2017.

\bibitem[Lu and Zhang(2012)]{lu2012augmented}
Z.~Lu and Y.~Zhang.
\newblock An augmented lagrangian approach for sparse principal component
  analysis.
\newblock \emph{Mathematical Programming}, 135\penalty0 (1):\penalty0 149--193,
  2012.

\bibitem[Montanari and Richard(2015)]{montanari2015non}
A.~Montanari and E.~Richard.
\newblock Non-negative principal component analysis: Message passing algorithms
  and sharp asymptotics.
\newblock \emph{IEEE Transactions on Information Theory}, 62\penalty0
  (3):\penalty0 1458--1484, 2015.

\bibitem[Nocedal and Wright(2006)]{nocedalNumericalOptimization2006}
J.~Nocedal and S.~J. Wright.
\newblock \emph{Numerical Optimization}.
\newblock Springer Series in Operations Research. {Springer}, 2nd ed edition,
  2006.
\newblock ISBN 978-0-387-30303-1.

\bibitem[Noll and Rondepierre(2013)]{noll2013Convergencelinesearch}
D.~Noll and A.~Rondepierre.
\newblock Convergence of linesearch and trust-region methods using the
  kurdyka--{\l}ojasiewicz inequality.
\newblock In \emph{Computational and Analytical Mathematics: In Honor of
  Jonathan Borwein's 60th Birthday}, pages 593--611. Springer, 2013.

\bibitem[Ozoli{\c{n}}{\v{s}} et~al.(2013)Ozoli{\c{n}}{\v{s}}, Lai, Caflisch,
  and Osher]{ozolicnvs2013compressed}
V.~Ozoli{\c{n}}{\v{s}}, R.~Lai, R.~Caflisch, and S.~Osher.
\newblock Compressed modes for variational problems in mathematics and physics.
\newblock \emph{Proceedings of the National Academy of Sciences}, 110\penalty0
  (46):\penalty0 18368--18373, 2013.

\bibitem[Pang and Qi(1995)]{pang1995globally}
J.-S. Pang and L.~Qi.
\newblock A globally convergent newton method for convex sc1 minimization
  problems.
\newblock \emph{Journal of Optimization Theory and Applications}, 85\penalty0
  (3):\penalty0 633--648, 1995.

\bibitem[Qi and Sun(1993)]{qi1993nonsmoothversion}
L.~Qi and J.~Sun.
\newblock A nonsmooth version of {Newton}'s method.
\newblock \emph{Mathematical programming}, 58\penalty0 (1-3):\penalty0
  353--367, 1993.

\bibitem[Qi and Womersley(1995)]{qi1995sqp}
L.~Qi and R.~S. Womersley.
\newblock An sqp algorithm for extended linear-quadratic problems in stochastic
  programming.
\newblock \emph{Annals of Operations Research}, 56\penalty0 (1):\penalty0
  251--285, 1995.

\bibitem[Steihaug(1983)]{steihaugConjugateGradientMethod1983}
T.~Steihaug.
\newblock The conjugate gradient method and trust regions in large scale
  optimization.
\newblock \emph{SIAM Journal on Numerical Analysis}, 20\penalty0 (3):\penalty0
  626--637, 1983.

\bibitem[Sun et~al.(1997)Sun, Fukushima, and Qi]{sun1997computablegeneralized}
D.~Sun, M.~Fukushima, and L.~Qi.
\newblock A computable generalized {{Hessian}} of the {{D-gap}} function and
  {{Newton-type}} methods for variational inequality problems.
\newblock \emph{Complementarity and Variational Problems: State of the Art, MC
  Ferris and JS Pang (eds.), SIAM, Philadelphia, PA}, pages 452--472, 1997.

\bibitem[Toint(1981)]{toint1981efficientsparsity}
P.~Toint.
\newblock Towards an efficient sparsity exploiting {{Newton}} method for
  minimization.
\newblock In \emph{Sparse Matrices and Their Uses}, pages 57--88. {Academic
  press}, 1981.

\bibitem[Vandereycken(2013)]{vandereycken2013low}
B.~Vandereycken.
\newblock Low-rank matrix completion by riemannian optimization.
\newblock \emph{SIAM Journal on Optimization}, 23\penalty0 (2):\penalty0
  1214--1236, 2013.

\bibitem[Wen and Yin(2013)]{wen2013feasible}
Z.~Wen and W.~Yin.
\newblock A feasible method for optimization with orthogonality constraints.
\newblock \emph{Mathematical Programming}, 142\penalty0 (1):\penalty0 397--434,
  2013.

\bibitem[Yang et~al.(2000)Yang, LI, and Zhou]{yang2000trustregion}
Y.~Yang, D.~LI, and S.~Zhou.
\newblock A trust region method for a semismooth reformulation to variational
  inequality problems.
\newblock \emph{Optimization Methods and Software}, 14\penalty0 (1-2):\penalty0
  139--157, 2000.

\bibitem[Zhang et~al.(2016)Zhang, J~Reddi, and Sra]{zhang2016riemannian}
H.~Zhang, S.~J~Reddi, and S.~Sra.
\newblock Riemannian {SVRG}: Fast stochastic optimization on {Riemannian}
  manifolds.
\newblock \emph{Advances in Neural Information Processing Systems}, 29, 2016.

\bibitem[Zhou et~al.(2022)Zhou, Bao, Ding, and Zhu]{zhou2022semismooth}
Y.~Zhou, C.~Bao, C.~Ding, and J.~Zhu.
\newblock A semismooth {{Newton}} based augmented {{Lagrangian}} method for
  nonsmooth optimization on matrix manifolds.
\newblock \emph{Mathematical Programming}, pages 1--61, 2022.

\bibitem[Zhu et~al.(2008)Zhu, Zhang, Smith, Drees, Brem, Kruglyak, Bumgarner,
  and Schadt]{zhu2008integrating}
J.~Zhu, B.~Zhang, E.~N. Smith, B.~Drees, R.~B. Brem, L.~Kruglyak, R.~E.
  Bumgarner, and E.~E. Schadt.
\newblock Integrating large-scale functional genomic data to dissect the
  complexity of yeast regulatory networks.
\newblock \emph{Nature genetics}, 40\penalty0 (7):\penalty0 854--861, 2008.

\bibitem[Zou et~al.(2006)Zou, Hastie, and Tibshirani]{zou2006sparse}
H.~Zou, T.~Hastie, and R.~Tibshirani.
\newblock Sparse principal component analysis.
\newblock \emph{Journal of computational and graphical statistics}, 15\penalty0
  (2):\penalty0 265--286, 2006.

\end{thebibliography}
\end{document}